\newtheorem{theorem}{Theorem}[section]
\newtheorem{proposition}[theorem]{Proposition}
\newtheorem{corollary}[theorem]{Corollary}
\newtheorem{lemma}[theorem]{Lemma}
\newtheorem{definition}[theorem]{Definition}
\newtheorem{remark}[theorem]{Remark}
\def\uni{\mathrm{uni}}
\def\ab{\mathrm{ab}}
\title{Eigenvalues of Maximal Abelian Covers}
\author{Wenbo Li, Michael Magee, Mostafa Sabri, Joe Thomas}
\date{\today}
\begin{document}

\maketitle

\begin{abstract}
We fully characterize the eigenvalues (flat bands) of the maximal abelian cover of a finite multi-graph in terms of the combinatorics of the base graph. This solves a problem of Higuchi and Nomura (2009, Problem 6.11). We use our new criterion to prove that the maximal abelian cover of any regular multi-graph has no eigenvalues, thereby proving a conjecture of (ibid., Conjecture 6.12). \\
In an appendix, we relate our criterion for eigenvalues of the maximal abelian cover to an existing criterion for eigenvalues of the universal cover.
\end{abstract}

\tableofcontents

\section{Introduction}\label{Sec:Intro}

 Let $G=(V,E)$ be a finite multi-graph. Among connected multi-graphs that are Galois covers of $G$ with abelian deck group, there is a maximal one that we denote by $G^\mathrm{ab}$ and refer to as the  \textbf{maximal abelian cover} of $G$. A Schr\"{o}dinger operator on $G$ is an operator $\mathcal{H}:\ell^2(V)\to \ell^2(V)$ whose matrix entries are 
 \begin{equation}\label{eq:schrodinger_op}
 \mathcal{H}_{u v} = \sum_{\substack{\text{directed edges $e$} \\ \text{from $u$ to $v$}}} w_e + \delta_{u v} \mathcal{V}_v ,
 \end{equation}
 where $w_e\in\mathbb{C}$ satisfy $w_e=\overline{w_{e^{-1}}}\neq 0$ and $V_v \in \mathbb{R}
 $, so $\mathcal{H}$ is self-adjoint. When $w_e\equiv 1$ and $\mathcal{V}_v\equiv 0$ for all $e\in \vec{E}$ and $v \in V$, $\mathcal{H}$ is the \textbf{adjacency operator} on $G$.

In this article we are interested in the Schr\"{o}dinger operator $\mathcal{H}^{\ab}$ on $\ell^2(G^\mathrm{ab})$ defined through the pull back of edge and vertex weights from $G$. This is a  Schr\"odinger operator on a $\mathbb{Z}^\nu$-periodic graph with periodic potential. It is known that the spectrum of $\mathbb{Z}^\nu$-periodic graphs in general consists of intervals of absolutely continuous spectrum together with potentially some eigenvalues of infinite multiplicity. These eigenvalues are known as ``flat bands'', in reference to the Bloch-Floquet theory which is used to analyze them \cite{Hi.No2009,Sa.Yo2023}. 

Understanding these eigenvalues is important for physics applications \cite{KFSH20}. It is easy to generate families of $\mathbb{Z}^\nu$-periodic graphs which have eigenvalues using various procedures, such as graph decorations. Guaranteeing however that a given periodic graph has no eigenvalues is a lot more challenging, even for abelian covers of very small graphs \cite{Sa.Yo2023}.

Every \(\mathbb{Z}^\nu\)-periodic graph is an Abelian cover of its quotient under the action of \(\mathbb{Z}^\nu\).
Maximal abelian covers are of great importance since their eigenvalues are common to all periodic graphs that are abelian covers of the same multi-graph (see the first paragraph of Section \ref{sec:prio}). Moreover, many well-studied periodic graphs arise as maximal abelian covers such as the integer, hexagonal and Lieb lattices. Among these examples, only the Lieb lattice has an eigenvalue. See \cite{Hi.No2009} and \cite[\S 8.3.V-VI]{Su2012} for more examples and background.


The first theorem of the paper is a complete geometric characterization of the eigenvalues of maximal abelian covers. This answers a more general version of a question of Higuchi and Nomura \cite[Problem 6.11]{Hi.No2009}. The (potential) eigenvalues of $\mathcal{H}^{\ab}$ are characterized in terms of 
roots of generalized matching polynomials of subgraphs of the graph $G$ (see Definition \ref{Def:generalisedmatchingpolynomial}). In the important special case that $\mathcal{H}$ is the adjacency operator, these polynomials reduce to the standard matching polynomials.

\begin{theorem}\label{thm:criteria}
    For any $\lambda\in \mathbb{R}$, $\lambda$ is an eigenvalue of $\mathcal{H}^{\ab}$ if and only if for any degree-2 subgraph $\gamma$ of $G$, $\lambda$ is a root of the generalized matching polynomial $m_{G\setminus\gamma}^\mathcal{H}$.
\end{theorem}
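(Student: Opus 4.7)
The approach is Bloch--Floquet decomposition combined with a combinatorial expansion of a twisted determinant. Since $G^{\ab}$ is a Galois cover of $G$ with deck group $A := H_1(G;\mathbb{Z}) \cong \mathbb{Z}^\nu$, the operator $\mathcal{H}^{\ab}$ is unitarily equivalent to the direct integral $\int^{\oplus}_{\widehat A} \mathcal{H}(\chi)\, d\chi$ of twisted operators on $\ell^2(V)$, where $\mathcal{H}(\chi)_{uv} = \sum_{e:u\to v}\chi(e)w_e + \delta_{uv}\mathcal{V}_v$ after fixing a connection on edges. A real $\lambda$ is a flat band of $\mathcal{H}^{\ab}$ precisely when $\chi\mapsto \det(\mathcal{H}(\chi)-\lambda)$ vanishes on a positive-measure subset of $\widehat A$; since this map is a trigonometric polynomial (hence real-analytic), this is equivalent to its vanishing identically on the character torus $\widehat A$.

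I next expand the twisted determinant via the Leibniz formula. Each permutation $\sigma$ of $V$, together with a choice of directed edge between consecutive vertices in each non-trivial cycle of $\sigma$, produces a generalized cycle cover of $G$. Its cycles partition into three types: fixed points of $\sigma$ contributing untwisted factors $(\mathcal{V}_v-\lambda)$; two-cycles of $\sigma$ traversing a single undirected edge twice (``backtracking'') contributing untwisted factors $|w_e|^2$; and genuine oriented cycles in $G$ (self-loops, digons formed from two distinct parallel edges, and cycles of length $\geq 3$), each carrying non-trivial holonomy under $\chi$. The third type assembles into an \emph{oriented} degree-2 subgraph $\vec\gamma$ of $G$, while the first two form a matching $M$ with isolated vertices on $V\setminus V(\vec\gamma)$. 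Summing over $M$ and isolated vertices with $\vec\gamma$ fixed produces exactly $m^{\mathcal{H}}_{G\setminus\gamma}(\lambda)$, where $\gamma$ is the underlying unoriented subgraph. Hence
\[
\det(\mathcal{H}(\chi) - \lambda) = \sum_{\vec\gamma}\, \varepsilon(\vec\gamma)\, w(\vec\gamma)\, \chi([\vec\gamma])\, m^{\mathcal{H}}_{G\setminus\gamma}(\lambda),
\]
where the sum ranges over oriented degree-2 subgraphs $\vec\gamma$ of $G$, $w(\vec\gamma) = \prod_{e\in\vec\gamma} w_e \neq 0$, and $\varepsilon(\vec\gamma)\in\{\pm 1\}$ is a permutation sign.

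To finish I invoke character orthogonality. Since $G$ is a graph, $H_1(G;\mathbb{Z})$ is torsion-free and coincides with the cycle space $Z_1(G)$; hence distinct oriented 1-chains represent distinct homology classes, and in particular distinct $\vec\gamma$ produce Fourier-orthogonal characters $\chi\mapsto\chi([\vec\gamma])$ on $\widehat A$. Because $w(\vec\gamma)\neq 0$, the displayed expansion vanishes identically on $\widehat A$ if and only if $m^{\mathcal{H}}_{G\setminus\gamma}(\lambda) = 0$ for every $\vec\gamma$, equivalently for every (unoriented) degree-2 subgraph $\gamma$ of $G$ (including $\gamma=\emptyset$, which contributes the unadorned $m^{\mathcal{H}}_G$). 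This is the stated criterion.

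The main obstacle I anticipate is not conceptual but combinatorial: tracking signs, multi-edge multiplicities, and self-loop contributions in the Leibniz expansion, and verifying that the summation over matchings and isolated vertices on the complement produces the generalized matching polynomial of Definition \ref{Def:generalisedmatchingpolynomial} \emph{exactly}, rather than an object differing by an overall sign or normalization. The decisive use of \emph{maximality} of the abelian cover occurs in the orthogonality step: for a non-maximal abelian cover, whose deck group is a proper quotient of $H_1(G;\mathbb{Z})$, several $\vec\gamma$'s collapse to the same character, so the matching polynomials would be constrained only in particular linear combinations rather than individually, and the clean ``iff'' would fail.
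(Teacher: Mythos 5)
Your argument is correct and follows essentially the same route as the paper: Floquet decomposition, a Leibniz expansion of the twisted determinant into oriented degree-$2$ subgraphs paired with generalized matching polynomials of their complements, and Fourier orthogonality of the resulting characters. The only (harmless) difference is that you work on the torus dual to the actual deck group $H_1(G;\mathbb{Z})\cong\mathbb{Z}^\nu$, which requires your correct observation that distinct oriented degree-$2$ subgraphs determine distinct homology classes (since $H_1$ of a graph is the full cycle space), whereas the paper inflates to the larger torus $\mathbb{T}^{|E|}$ via an auxiliary disconnected periodic graph $G^{\mathrm{per}}$ so that distinct oriented subgraphs automatically yield distinct monomials $z_\gamma$.
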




Here we use the convention that the empty subgraph is $2$-regular, so $\lambda$ should also be a root of $m_G^{\mathcal{H}}$. By a M\"{o}bius inversion argument, Theorem~\ref{thm:criteria} implies that  $\lambda$ is an eigenvalue of $\mathcal{H}^{\ab}$ if and only if it is an eigenvalue of $\mathcal{H}|_{G\setminus \gamma}$ for any degree-$2$ subgraph $\gamma$, see Remark~\ref{remark:cha=mat}. 
Furthermore, in the case that $\mathcal{H}$ is the adjacency operator on $G$, and $G$ is $d$-regular, a result of Heilmann and Lieb \cite[Thm. 4.3]{He.Li1972} says that the roots of $m_{G}^\mathcal{H}$ are in 
\[
[ -2\sqrt{d-1} , 2 \sqrt{d-1} ]
\]
--- which is the $\ell^2$ spectrum of the adjacency operator of the universal cover of $G$. By an easy modification to the arguments of \cite{marcus2015interlacing,godsil1993algebraic} this extends to Schr\"{o}dinger operators on a general multi-graph where $2\sqrt{d-1}$ is replaced by the spectral radius of $\mathcal{H}$ lifted to the universal cover --- see Corollary \ref{cor:ram-bound}. In particular, Theorem \ref{thm:criteria} implies that all eigenvalues of $\mathcal{H}^{\mathrm{ab}}$ lie in this interval too.


A direct consequence of Theorem \ref{thm:criteria} is that the maximal Abelian cover of \(G\) has no eigenvalues whenever \(G\) admits a \(2\)-factor, i.e. a degree-$2$ subgraph containing all vertices of \(G\). This holds for all even regular graphs by Petersen's theorem, and it also applies to odd regular graphs with few bridges, see Lemma \ref{Lemma: keylemma} for a more precise statement.
Nonetheless, we are able to use the criterion of Theorem \ref{thm:criteria} to still prove that $\mathcal{H}^{\ab}$ has no eigenvalues for all finite regular multi-graphs.
\begin{theorem}\label{thm:regular}
 If $G$ is regular, then $\mathcal{H}^{\ab}$ has no eigenvalues.
\end{theorem}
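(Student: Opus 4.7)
By Theorem~\ref{thm:criteria} (equivalently, by its reformulation in Remark~\ref{remark:cha=mat}), it suffices to show that for every $\lambda \in \mathbb{R}$ there exists a degree-$2$ subgraph $\gamma$ of $G$ such that $\lambda$ is not a root of $m_{G \setminus \gamma}^{\mathcal{H}}$ (equivalently, not an eigenvalue of $\mathcal{H}|_{G \setminus \gamma}$). The argument then naturally splits on the parity of the regularity.

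\textbf{Step 1: Even-regular case.} Suppose $G$ is $2k$-regular. Then by Petersen's theorem, $G$ admits a $2$-factor $\gamma^*$ whose vertex set equals all of $V(G)$. Taking $\gamma=\gamma^*$, the subgraph $G \setminus \gamma^*$ is empty, so $m_{G \setminus \gamma^*}^{\mathcal{H}}$ is the empty product, equal to the constant polynomial $1$. Equivalently, $\mathcal{H}|_{G \setminus \gamma^*}$ acts on the zero-dimensional space and therefore has no eigenvalues. Hence no $\lambda$ satisfies the criterion of Theorem~\ref{thm:criteria}, and the even-regular case is resolved in one stroke.

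\textbf{Step 2: Odd-regular case.} Suppose $G$ is $(2k+1)$-regular with $k \geq 1$. Here $G$ may fail to have a $2$-factor because of bridges; this is exactly the scenario hinted at by Lemma~\ref{Lemma: keylemma}. My plan is to extract a \emph{maximum partial $2$-factor} $\gamma$, and then exploit two sources of flexibility. First, the bridge/block decomposition: in every bridgeless block, the effective local degree is $2k+1$ or $2k$, and Petersen-style arguments (possibly after auxiliary doubling) furnish $2$-factors covering those blocks. Second, the alternating-path rotations of the partial $2$-factor inside a block produce a family of distinct degree-$2$ subgraphs $\gamma$ leaving only a small, controlled set $X = V(G) \setminus V(\gamma)$ of bridge-incident vertices uncovered. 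For any fixed candidate $\lambda$, I would then argue that at least one such $\gamma$ produces $\mathcal{H}|_{G[X]}$ whose finite spectrum misses $\lambda$, using the Heilmann--Lieb-type bound of Corollary~\ref{cor:ram-bound} to confine the spectra of all such restrictions to a controlled window, combined with interlacing under the rotations to rule out a common $\lambda$.

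\textbf{Main obstacle.} The difficult step is the odd-regular case, and within it, the hardest aspect is showing that no single $\lambda$ can belong to the spectrum of $\mathcal{H}|_{G[X_\gamma]}$ for \emph{every} maximum partial $2$-factor $\gamma$. Equivalently, one must prove that the intersection $\bigcap_\gamma \mathrm{spec}(\mathcal{H}|_{G \setminus \gamma})$ is empty. I expect this to require a synthesis of Tutte--Berge-type structural results (to classify the leftover sets $X_\gamma$) with the root-location and interlacing properties of generalized matching polynomials, rather than a purely algebraic manipulation. The proof of Theorem~\ref{thm:criteria} already provides the bridge between spectral and combinatorial data; the remaining challenge is purely combinatorial but nontrivial in its execution.
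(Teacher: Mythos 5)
Your Step 1 (even-regular case) is exactly the paper's argument: Petersen gives a $2$-factor, its removal leaves the empty graph, and Theorem~\ref{thm:criteria} finishes. Your reduction of the odd case to finding, for each $\lambda$, one degree-$2$ subgraph $\gamma$ with $m^{\mathcal{H}}_{G\setminus\gamma}(\lambda)\neq 0$ is also the right starting point. But Step 2 is a plan, not a proof, and its central step has a genuine gap. You assert that the uncovered set $X_\gamma=V(G)\setminus V(\gamma)$ is ``small and controlled'' and that varying $\gamma$ by alternating-path rotations, together with Corollary~\ref{cor:ram-bound} and interlacing, forces $\bigcap_\gamma \mathrm{spec}(\mathcal{H}|_{G\setminus\gamma})=\emptyset$. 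Neither claim holds as stated. First, $X_\gamma$ is not small: any vertex all of whose incident edges are bridges lies in \emph{every} $G\setminus\gamma$, and more generally entire Type~I bridge-blocks (blocks in which every vertex meets a bridge) can never be fully covered by a degree-$2$ subgraph, so the leftover graphs share a large forced core. Second, Corollary~\ref{cor:ram-bound} only confines roots to $[-\rho,\rho]$; it gives no mechanism to exclude a particular $\lambda$ inside that window, and ``interlacing under rotations'' is not a theorem you can invoke --- proving that the intersection of spectra over all admissible $\gamma$ is empty is precisely the content of the theorem, so this step is circular as written.

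What actually closes the gap in the paper is different machinery. The combinatorial input is Proposition~\ref{prop: smoothprocedure}: in a bridgeless block of maximum degree $d$, for any distinguished vertex $v$ of degree $<d$ there exist \emph{two} degree-$2$ subgraphs covering all degree-$d$ vertices, one containing $v$ and one not; this two-way choice is the flexibility that replaces your ``rotations.'' The algebraic input is the recursion of Lemma~\ref{lem:recursion} applied to \emph{leaf deletion}: if $\lambda$ were a root of $m^{\mathcal{H}}_{H}$ and $m^{\mathcal{H}}_{H\setminus\{l\}}$ for a leaf $l$ with neighbour $u$, it would be a root of $m^{\mathcal{H}}_{H\setminus\{l,u\}}$, and cascading this down the bridge-block tree (Lemmas~\ref{lemma:randomleafavoideigenvalue} and~\ref{lemma:lemmaforcontradiction}) produces a contradiction. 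These are assembled by induction on the number of non-leaf bridge-blocks (Lemma~\ref{lemma: mainlemma}), after first collapsing leaf-blocks to leaves via Proposition~\ref{prop: smoothprocedure}. Your Tutte--Berge/maximum-partial-$2$-factor framing does not supply either of these ingredients, so the odd-regular case remains unproved in your proposal.
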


Theorem~\ref{thm:regular} is the second result of this paper, and is our main contribution. It confirms one of the main conjectures about the spectral structure of maximal abelian covers of regular graphs \cite[Conj. 6.12]{Hi.No2009}, the remaining one being the lack of spectral gaps \cite{HS,HS2}. Our answer is in fact more general as it allows for Schr\"odinger operators. 

The proof of Theorem \ref{thm:regular} involves, after a little Floquet theory, a delicate combinatorial argument that carefully studies the structure of the bridges within $G$ and the blocks that they decompose the graph into, together with a mechanism to find degree-$2$ subgraphs on bridge-less components of the graph containing all vertices not connected to bridges. This novel combinatorial argument combines well with the well-known recursion relation satisfied by the generalized matching polynomials when deleting vertices from a graph (Lemma \ref{lem:recursion}). Because the argument relies only on these two things we believe it to be robust and more generally useful and interesting.  We refer to Section \ref{sec:proof_overview} for an in-depth overview of the proof of Theorem \ref{thm:regular}.

Although Theorem~\ref{thm:regular} concerns regular graphs, it can be combined with Theorem~\ref{thm:criteria} to deduce some simple consequences for non-regular graphs. In particular, we can prove that $\mathcal{H}^{\ab}$ has no eigenvalues if e.g. $G$ is obtained by attaching a leaf to each vertex of an even-regular graph, or adding self-loops to some vertices of a regular graph. Furthermore, by using the ideas of the proof of Theorem \ref{thm:regular} we were able to prove $\mathcal{H}^{\ab}$ has no eigenvalues other than zero for many bi-regular graphs (for vertex potentials equal to zero); the bi-regular case in general remains open and is a natural follow on question from our paper.


We conclude the introduction with a brief discussion of the eigenvalues of the pull-back of $\mathcal{H}$ to the \textbf{universal cover} $\tilde{G}$ of $G$, which we denote by $\mathcal{H}^{\uni}$. Recent work of Banks, Garza-Vargas and Mukherjee \cite{banks2022point} characterizes the eigenvalues of $\mathcal{H}^{\uni}$ in terms of the combinatorics and spectra of \textbf{Aomoto subsets of $G$},  building upon earlier work of Aomoto \cite{Ao1991}, see also \cite{Salez20}. This criterion is stated as the B-GV-M criterion in the appendix to this paper.

Both Banks, Garza-Vargas, Mukherjee \cite[Theorem 3.2]{banks2022point} and Arizmendi, Cébron, Speicher, and Yin \cite[Proposition 7.6]{arizmendi2024universality} prove that any eigenvalue of the universal cover of a finite graph \(G\) is also an eigenvalue of \(G\) itself.   Although not explicitly stated, the argument from \cite{arizmendi2024universality} also shows that any eigenvalue of $\mathcal{H}^{\uni}$ is an eigenvalue of $\mathcal{H}^{\mathrm{ab}}$. This suggests a direct relation between the B-GV-M criterion and the one in Theorem \ref{thm:criteria}. In Appendix~\ref{ap} we prove that indeed, if the graph $G$ satisfies the B-GV-M criterion (so $\mathcal{H}^\uni$ has an eigenvalue), then it also satisfies our criterion (so $\mathcal{H}^\ab$ has the same eigenvalue).

As a final remark, despite trying, we could not find a counterexample $G$ to the possibility that when $\mathcal{H}$ is the adjacency operator of $G$, every eigenvalue of $\mathcal{H}^{\mathrm{ab}}$ is an eigenvalue of $\mathcal{H}^{\uni}$ --- and neither could we prove it. It would be highly desirable to resolve this matter one way or the other. 

\subsection{Prior work}\label{sec:prio}

When $G$ has a $2$-factor, Theorem~\ref{thm:criteria} immediately implies that $\mathcal{H}^{\ab}$ has no eigenvalues. This particular consequence was proved by Higuchi and Nomura before \cite{Hi.No2009} in the case of  Laplacians by a different method. Namely, they showed that eigenvalues of the maximal abelian cover descend to all abelian covers of the same base graph. They then used the $2$-factor to construct a particular $\mathbb{Z}$-periodic cover which has no eigenvalues, thereby proving the result.

Maximal abelian covers are bipartite, hence the spectrum of their adjacency operator symmetric. It was shown in \cite{HS,HS2} that if all degrees of the graph are even, then the normalized Laplacian on the maximal abelian cover has no spectral gaps: the spectrum fills the whole interval $[0,2]$. The authors also prove this for certain families of odd-regular graphs. An eigenvalue of $\mathcal{H}^{\ab}$ can never be at the top of the spectrum, so at least for these examples, the point spectrum is completely embedded in the continuous spectrum. 

As we previously mentioned, only limited results are known about flat bands of periodic graphs, despite the very large physics literature on the topic. We refer to \cite{Sa.Yo2023} for an overview. An interesting recent result \cite{FL} confirms the physical intuition that flat bands are rare, by showing that for a given infinite periodic graph, the set of weights such that the corresponding Schr\"odinger operator has no eigenvalues is generic.

\subsection{Overview of the paper}

The remainder of the paper is organized as follows. In Section~\ref{Sec:constructionofmaxabelian}, for a given finite graph $G$, we construct an explicit periodic graph $G^{\mathrm{per}}$ whose connected components are all isomorphic to the maximal abelian cover $G^{\mathrm{ab}}$. Despite being a very high-dimensional object in general, $G^{\mathrm{per}}$ has the advantage of possessing a Floquet matrix which is easier to understand using generalized matching polynomials, whose key properties we review in Section~\ref{Sec:thecriteria} before proving our criterion, Theorem~\ref{thm:criteria}. 
Section~\ref{Sec:atomthm} is dedicated to the proof of Theorem~\ref{thm:regular}. We begin with an overview of the proof in \S\,\ref{sec:proof_overview}. In \S\,\ref{Section:TypeII}, we construct degree-$2$ subgraphs containing all vertices of maximal degree in bridge-less multi-graphs (Proposition~\ref{prop: smoothprocedure}). This result is crucial for our proof and may be of broader interest. In \S\,\ref{section:Type I} we study the bridge-blocks of maximum degree strictly less than $d$ via Lemmas~\ref{lemma:randomleafavoideigenvalue} and~\ref{lemma:lemmaforcontradiction}, which show how removing leaves prohibits $\lambda$ from being a root of the generalized matching polynomial. The relation between leaves and degree-$2$ subgraphs uses Proposition~\ref{prop: smoothprocedure} and is explained later in \S\,\ref{sec:prooreg}, where all ingredients are finally combined in an inductive procedure (Lemma~\ref{lemma: mainlemma}) to prove Theorem~\ref{thm:regular}. The paper ends with Appendix~\ref{ap}, where we discuss the B-GV-M criterion for universal covers and its relation to $\mathcal{H}^{\mathrm{ab}}$.

\subsection{Notation}
We write $\mathbb{N}=\{1, 2 ,\cdots\}$ and for any $n\in \mathbb{N}$, we write $[ 1, n]=\{1,2,\cdots,n\}$. The cardinality of a set $I$ is written as $|I|$. For any sets $A$ and $B$, the set $A\setminus B$ is defined as 
\[
A\setminus B=\{x:x\in A, x\notin B\}.
\]
Note that $B$ is not necessarily a subset of $A$.

In this paper we work with  multi‑graphs allowing both \textbf{self‑loops} and \textbf{multi-edges}. For a graph $G=(V,E)$, $V$ is the set of vertices and $E$ is the set of undirected edges. We write $\vec{E}$ for the directed edge set of $G$, that is, it contains all edges of $E$ with both orientations. We also sometimes write \(V_G\), \(E_G\) and \(\vec{E}_G\) if the graph $G$ under consideration is ambiguous. For each edge $e\in\vec{E}$, 
its \textbf{origin} and \textbf{terminus} are denoted as $o(e)$ and $t(e)$ respectively, and 
we write $e^{-1}$ to be the same combinatorial edge but with the opposite direction. The maximal abelian cover of a graph $G$ is denoted by $G^{\mathrm{ab}}$ and the universal cover by $\tilde{G}$.

For a graph $G=(V,E)$ and a set $S\subset V$, the \textbf{induced subgraph} of $G$ on $S$ is denoted by $G[S]$ while the induced subgraph on $V\setminus S$ is denoted by $G\setminus S$. Similarly, if $H$ is a subgraph of $G$, we write $G\setminus H$ to denote the induced subgraph on $V\setminus V_H$. A \textbf{$k$-factor} of a graph $G$ is a $k$-regular spanning subgraph of $G$, i.e., a $k$-regular subgraph that shares the same vertex set as $G$. 

\subsection*{Acknowledgments}
We thank Charles Bordenave for informing us of the paper \cite{arizmendi2024universality}. W.L. has received funding from the National Natural Science Foundation of China (Grant No.\,123B2013). M.M. has received funding from the European Research
Council (ERC) under the European Union’s Horizon 2020 research and innovation programme (grant agreement No 949143). This work was funded in part by a Philip Leverhulme prize (M.M.) awarded by the Leverhulme Trust. J.T. has received funding from the Leverhulme Trust through a Leverhulme Early Career Fellowship (Grant No. ECF-2024-440).

\section{Periodic graphs and the maximal abelian cover}\label{Sec:constructionofmaxabelian}

Let $G=(V,E)$ be a finite multi-graph. In this section, we construct an explicit periodic graph $G^{\mathrm{per}}$ that is a union of at most a countable number of copies of the maximal abelian cover $G^{\mathrm{ab}}$ of $G$. We then show by means of Floquet theory that the $\ell^2$ eigenvalues of $\mathcal{H}^{\ab}$ coincide with the eigenvalues of an associated $n\times n$ matrix $\mathcal{H}(z)$ which are constant in $z\in \mathbb{T}^{|E|}$.

\subsection{Construction of \texorpdfstring{\(G^{\mathrm{per}}\)}{}}

Given $G=(V,E)$, let $\vec{E}$ be the collection of directed edges of $E$, so that each edge appears in $\vec{E}$ with both orientations and let $\vec{E}_0\subseteq \vec{E}$ be a choice of orientation on each edge in $E$. That is, every edge in $E$ appears with exactly one of its directions in $\vec{E}_0$. Let $F(E)$ and $F_{ab}(E)$ be the free group and free abelian group generated by $\vec{E}_0$ (different choices of $\vec{E}_0$ give isomorphic groups so we drop the dependence in the notation). The inverse of a generator in either of these groups is the same edge but with the opposite orientation, hence the set of generators and their inverses is identified with $\vec{E}$. The group product of $e_1,e_2 \in F(E)$ (resp. $F_{ab}(E)$) is denoted by $e_1e_2$ (resp. $e_1+e_2$). The canonical abelianization map from $F(E)$ to $F_{ab}(E)$ is denoted as $\pi_0$. That is, $\pi_0$ maps each generator of $F(E)$ to the corresponding generator in $F_{ab}(E)$ and extends as a homomorphism (e.g. $\pi_0(e_1e_2e_1^{-1}e_3)=e_2+e_3$).

 The periodic graph $G^{\mathrm{per}}$ is then constructed as follows.

\begin{definition}
     Let $G^{\mathrm{per}}$ be the graph with vertex set $V_{G^{\mathrm{per}}}= F_{ab}(E)\times V$.  For $(u,i), (v,j) \in  F_{ab}(E)\times V$ where $u,v\in F_{ab}(E), i,j \in V$, there is an edge between $(u,i)$ and $(v,j)$ if and only if $v=u+e$ and $o(e)=i, t(e)=j$ for some $e\in \Vec{E}$. 
\end{definition}

In the same spirit, we define the following graph $G^{\mathrm{uni}}$.

\begin{definition}
    Let $G^{\mathrm{uni}}$ be the graph with vertex set $V_{G^{\mathrm{uni}}}=F(E)\times V$, and for $(u,i), (v,j) \in F(E)\times V$ where $u,v\in F(E), i,j \in V$, there is an edge between $(u,i)$ and $(v,j)$ if and only if $v=ue$ and $o(e)=i, t(e)=j$ for some $e\in \Vec{E}$. 
\end{definition}

Note that the abelianization $\pi_0:F(E)\to F_{ab}(E)$ induces a covering map $\pi:G^{\mathrm{uni}}\to G^{\mathrm{per}}$ given by $\pi((u,i))=(\pi_0(u),i)$. In the case where \(G\) is a \textbf{bouquet} of degree \(2d\) (a graph with a single vertex and \(d\) self-loops), \(G^{\mathrm{per}}\) and \(G^{\mathrm{uni}}\) are, respectively, the \(d\)-dimensional integer lattice and the \(2d\)-regular tree, with \(\pi\) being the universal covering map onto the lattice. In particular, they coincide with $G^\mathrm{ab}$ and $\tilde{G}$ respectively. A similar result holds for general \(G\), as shown in the next section.

\subsection{Realizing \texorpdfstring{$G^{\mathrm{ab}}$}{} inside \texorpdfstring{$G^{\mathrm{per}}$}{}}

Let \(G\) be a finite multi-graph with fundamental group \(\pi_1(G)\).  
The maximal abelian cover \(G^{\mathrm{ab}}\) is the cover of \(G\) corresponding to the subgroup \([\pi_1(G), \pi_1(G)]\) under the Galois correspondence.  
Equivalently,
\[
G^{\mathrm{ab}} \cong \tilde{G} / [\pi_1(G), \pi_1(G)],
\]
the quotient of \(\tilde{G}\) by the action of \([\pi_1(G), \pi_1(G)]\). We first prove that \(G^\mathrm{per}\) consists of at most countably many copies of \(G^\mathrm{ab}\).

\begin{proposition}
\label{prop:components}Every connected component of $G^{\mathrm{per}}$
is isomorphic to $G^{\mathrm{ab}}$.
\end{proposition}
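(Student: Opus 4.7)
Fix a basepoint $v_0 \in V$ and let $C_0$ denote the connected component of $(0, v_0)$ in $G^{\mathrm{per}}$. The plan is to first reduce to showing $C_0 \cong G^{\mathrm{ab}}$, then identify the Galois/covering structure of $C_0 \to G$, and finally match it to the definition of the maximal abelian cover. For the reduction, note that the free abelian group $F_{ab}(E)$ acts on $G^{\mathrm{per}}$ by first-coordinate translation $a \cdot (u, v) = (u+a, v)$, which preserves adjacency, so each translation is a graph automorphism permuting the components. Moreover every component is a translate of $C_0$: given any $(u', v') \in G^{\mathrm{per}}$, pick any walk $w$ in $G$ from $v_0$ to $v'$; then $(\pi_0(w), v') \in C_0$ by construction, and it translates to $(u', v')$ via $a = u' - \pi_0(w)$. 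Hence every component is isomorphic to $C_0$.

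Next I would analyze $p: C_0 \to G$, $(u,v) \mapsto v$. A direct check on neighborhoods shows $p$ is a graph covering map, and its vertex fiber over $v$ is $\{(\pi_0(w), v) : w \text{ is a walk in } G \text{ from } v_0 \text{ to } v\}$. Let $\Lambda := \pi_0(\pi_1(G, v_0)) \subseteq F_{ab}(E)$. Restricted to $\Lambda$, the translation action preserves $C_0$, commutes with $p$, is free (translations in an abelian group are free), and acts transitively on each fiber: if $w_1,w_2$ are two walks from $v_0$ to $v$, then $w_1 w_2^{-1}$ is a loop at $v_0$, so $\pi_0(w_1) - \pi_0(w_2) \in \Lambda$, and conversely any loop at $v_0$ can be prepended to $w_1$. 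Thus $p: C_0 \to G$ is a Galois cover with abelian deck group $\Lambda$.

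Finally, I would invoke the Galois correspondence to match $C_0 \to G$ with $G^{\mathrm{ab}} \to G = \tilde{G}/[\pi_1(G,v_0),\pi_1(G,v_0)] \to G$. The cover $C_0 \to G$ corresponds to the kernel of the induced deck homomorphism $\pi_1(G, v_0) \to \Lambda$, which is the composition $\pi_1(G, v_0) \hookrightarrow F(E) \xrightarrow{\pi_0} F_{ab}(E)$. Since $H_1(G; \mathbb{Z})$ is simultaneously the abelianization of $\pi_1(G, v_0)$ and a subgroup of $F_{ab}(E) = C_1(G;\mathbb{Z})$ (as the kernel of the boundary $\partial: C_1 \to C_0$), the map $\pi_1 \to F_{ab}(E)$ factors as $\pi_1 \twoheadrightarrow H_1 \hookrightarrow F_{ab}(E)$, and so its kernel equals $[\pi_1(G, v_0), \pi_1(G, v_0)]$. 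Galois theory then yields $C_0 \cong \tilde{G}/[\pi_1(G, v_0), \pi_1(G, v_0)] = G^{\mathrm{ab}}$.

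The main delicate point is precisely this last identification: one needs the natural map $H_1(G;\mathbb{Z}) \hookrightarrow F_{ab}(E)$ to be \emph{injective}, so that the kernel computed by abelianizing all of $F(E)$ coincides with the commutator subgroup of $\pi_1$ rather than something strictly larger. This injectivity is built into the definition of $H_1$ of a graph as the kernel of the boundary operator, and it is the geometric content that makes $G^{\mathrm{per}}$ (and hence each of its components) the \emph{maximal} abelian cover rather than merely \emph{some} abelian cover.
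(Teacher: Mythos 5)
Your proof is correct, and it rests on the same pivotal fact as the paper's: the restriction of the abelianization map $\pi_0\colon F(E)\to F_{ab}(E)$ to $\pi_1(G,v_0)$ is the Hurewicz homomorphism, whose kernel is exactly $[\pi_1(G,v_0),\pi_1(G,v_0)]$ --- you justify this by factoring $\pi_1(G,v_0)\to F_{ab}(E)$ through the injection $H_1(G;\mathbb{Z})\hookrightarrow C_1(G;\mathbb{Z})$, while the paper cites it directly. The packaging, however, is genuinely different. The paper works upstairs: it realizes the universal cover $\tilde G$ explicitly inside $G^{\mathrm{uni}}$ as a connected component $C$, transports the $\pi_1$-action to $C$ by conjugation, and checks by hand that the covering map $\pi\colon G^{\mathrm{uni}}\to G^{\mathrm{per}}$ identifies two vertices of $C$ precisely when they lie in the same $[\pi_1,\pi_1]$-orbit. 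You instead work directly in $G^{\mathrm{per}}$: you first use the $F_{ab}(E)$-translation action to show all components are isomorphic to a fixed $C_0$ (a cleaner uniformity argument than the paper's appeal to the arbitrariness of the chosen component), then identify $p\colon C_0\to G$ as a Galois cover whose deck group $\pi_0(\pi_1(G,v_0))$ acts freely and transitively on fibers, and invoke the Galois correspondence. Your route outsources the explicit lifting computations to standard covering-space theory, at the cost of leaving the verification that $p$ is a covering (including its behavior at self-loops and multi-edges, where a self-loop $e$ at $i$ lifts to a genuine edge from $(u,i)$ to $(u+e,i)$) as a ``direct check''; the paper's route is more self-contained but more computational. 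Both are valid, and your closing observation --- that the injectivity of $H_1(G;\mathbb{Z})\hookrightarrow F_{ab}(E)$ is precisely what makes the resulting cover the \emph{maximal} abelian one rather than merely some abelian cover --- correctly isolates where the content of the proposition lies.
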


\begin{proof}
Fix $(u,i)\in G^{\mathrm{uni}}$ arbitrarily and let $C$ be its connected component. Note that the connected components of $G^\mathrm{uni}$ are mapped surjectively onto the connected components of $G^\mathrm{per}$ under the covering map $\pi$. Let $\tilde{G}$ be the standard construction of the universal cover as non-backtracking walks based at the vertex $i$. That is, $\tilde{G}$ has vertex set
\[
\{\{e_1,\dots, e_n\}:\{e_1,\dots, e_n\} \text{ is a non-backtracking walk starting from}\ i\},
\]
and there is an edge between two such non-backtracking walks $P_1$ and $P_2$ if one is a one-step extension of the other. We construct a map $\iota$ from $\tilde{G}$ to $G^{\mathrm{uni}}$ by 
\[
\iota(\{e_1,e_2,\dots, e_n\})=(ue_1e_2\cdots e_n, t(e_n)),
\]
where on the right-hand side we see the edges $e_1,\ldots,e_n$ as their corresponding elements in the free group $F(E)$. It is straightforward to see that $\iota$ gives rise to a graph isomorphism between $\tilde{G}$ and $C$.  

Now let
    \[
		\pi_1(G)=\Big\{ \gamma=\prod_{k=1}^ne_k: \{e_1,e_2,\dots, e_n\}\ \text{is a non-backtracking walk from}\ i\ \text{to}\ i\ \text{in}\ G\Big\}\leq F(E).
		\]
This group is isomorphic to the fundamental group of the graph (based at the vertex $i$) because each homotopy class of closed walk from $i$ has a unique non-backtracking representative. 

The fundamental group $\pi_1(G)$ has a standard action on the universal cover $\tilde{G}$ given by 
    \[\gamma \cdot \{e_1,\ldots,e_n\} := \text{the non-backtracking walk that the walk \ }\{\gamma,e_1,\ldots,e_n\}\ \text{reduces to}, \] 
and hence by intertwining with $\iota$ we get an action of $\pi_1(G)$ on $C$ defined by 
\[
\gamma \cdot (v,j):= (u \gamma u^{-1}v, j)
\]
for any $\gamma \in \pi_1(G)$ and $(v,j)\in C$. This is because for any non-backtracking walk $\{e_1,\ldots,e_n\}$ starting at $i$, and any $\gamma\in\pi_1(G)$ we have
    \begin{align*}
        \iota(\gamma\cdot\{e_1,\ldots,e_n\})&=(u\gamma e_1\cdots e_n,t(e_n))\\
        &=(u\gamma u^{-1}ue_1\cdots e_n,t(e_n))\\
        &=\gamma\cdot\iota(\{e_1,\ldots,e_n\}).
    \end{align*} 

Recall that $G^\mathrm{ab}\cong\tilde{G}/[\pi_1(G),\pi_1(G)]\cong C/[\pi_1(G),\pi_1(G)]$. We claim that any two vertices in $C$ are in the same $[\pi_1(G),\pi_1(G)]$-orbit if any only if their image under the covering map $\pi$ is the same. This means that the quotient of $C$ by the action of $[\pi_1(G),\pi_1(G)]$ is graph isomorphic to the corresponding connected component in $G^\mathrm{per}$, hence proving the proposition. 
To prove this claim, let $(u_1,i_1)$ and $(u_2,i_2)$ be any two vertices in $C$ so that by definition, 
$$u_1=u \prod_{k=1}
^{n_1}e_{k;1}\,\,\,\text{and}\,\,\,u_2=u \prod_{k=1}
^{n_2}e_{k;2},$$
where $\{e_{1;1},e_{2;1},\cdots,e_{n_1;1}\}$ and  $\{e_{1;2},e_{2;2},\cdots,e_{n_2;2}\}$ are two non-backtracking walks in $G$ from $i$ to $i_1$ and $i_2$ respectively. If $\pi((u_1,i_1))=\pi((u_2,i_2))$ then we have $i_1=i_2$ and $\pi_0(u_1u_2^{-1})=0$, which means first of all that
$$L_0:=\{ e_{1;1},e_{2;1},\cdots,e_{n_1;1},e_{n_2;2}^{-1}, e_{n_2-1;2}^{-1}, \cdots e_{1;2}^{-1}\}$$
is a closed walk from $i$ to $i$ in $G$ and hence 
\[
\gamma_0:=\prod_{k=1}
^{n_1}e_{k;1}\,\prod_{k=0}
^{n_2-1}e^{-1}_{n_2-k;2}\in\pi_1(G).
\]
And secondly, 
    \[
		\sum_{k=1}^{n_1}e_{k;1}+\sum_{k=1}^{n_2}e^{-1}_{k;2}=0
		\]
in $F_{ab}(E)$. The restriction of $\pi_0$ to $\pi_1(G)$ is precisely the Hurewicz homomorphism (see for example \cite[Section 5.3]{Su2012}) which coincides with the abelianization of $\pi_1(G)$. Thus, the kernel of $\pi_0|_{\pi_1(G)}$ is exactly $[\pi_1(G),\pi_1(G)]$. So, since $\pi_0(\gamma_0)=0$, we must have $\gamma_0\in [\pi_1(G),\pi_1(G)]$ and hence since $\gamma_0$ sends $(u_2,i_2)$ to $(u_1,i_1)$ we prove one part of the claim.

For the converse, if $\gamma\cdot (u_1,j_1)=(u_2,j_2)$ and $\gamma \in [\pi_1(G),\pi_1(G)]$, then $j_1=j_2$ and $u\gamma u^{-1}u_1=u_2$. Thus,
    $$\pi_0(u_2)=\pi_0(u\gamma u^{-1})+\pi_0(u_1)=\pi_0(u)+\pi_0(u^{-1})+\pi_0(\gamma)+\pi_0(u_1)=\pi_0(u_1).$$
It follows that $\pi_0(u_2)=\pi_0(u_1)$ and $\pi(u_1,j_1)=\pi(u_2,j_2)$ as required.
\end{proof}

\subsection{Eigenvalues of \texorpdfstring{$G^{\mathrm{ab}}$}{} and \texorpdfstring{$G^{\mathrm{per}}$}{}}

We now consider a Schr\"{o}dinger operator $\mathcal{H}$ on $G$ as in \eqref{eq:schrodinger_op}.
Denote the pull-back of $\mathcal{H}$ to $G^{\mathrm{ab}}$ and $G^{\mathrm{per}}$ as $\mathcal{H}^{\mathrm{ab}}$ and $\mathcal{H}^{\mathrm{per}}$, respectively. For each $z\in\mathbb{T}^{|\vec{E}_0|}$ define the matrix $\mathcal{H}(z)$ by

$$\mathcal{H}(z)=\mathcal{A}(z)+\mathcal{V},$$
where 
$$\mathcal{A}(z)=\sum_{e\in \vec{E}_0}z_ew_eA_e+\overline{z_e}\overline{w_e}A_{e^{-1}}.$$
The following proposition relates the spectrum of $\mathcal{H}(z)$ to $\mathcal{H}^{\ab}$.
\begin{proposition}
\label{prop:eigenvalue-correspondence}The maximal abelian cover $G^{\mathrm{ab}}$
of $G$ has an eigenvalue $\lambda$ if and only if $\lambda$ is
an eigenvalue of $\mathcal{H}(z)$ for all $z=(z_{e})_{e\in \vec{E}_0}\in\mathbb{T}^{|\vec{E}_0|}$.
\end{proposition}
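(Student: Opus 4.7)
The plan is a standard Floquet/Bloch decomposition argument with an analyticity upgrade at the end. First, by Proposition~\ref{prop:components}, every connected component of $G^{\mathrm{per}}$ is isomorphic to $G^{\mathrm{ab}}$, and the pullback of $\mathcal{H}$ respects this decomposition. Hence $\mathcal{H}^{\mathrm{per}}$ is an orthogonal direct sum of copies of $\mathcal{H}^{\ab}$, and the two operators have the same set of $\ell^2$ eigenvalues. It therefore suffices to prove that $\lambda$ is an eigenvalue of $\mathcal{H}^{\mathrm{per}}$ if and only if $\lambda \in \sigma(\mathcal{H}(z))$ for every $z \in \mathbb{T}^{|\vec{E}_0|}$.

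For the Floquet step, observe that $F_{ab}(E) \cong \mathbb{Z}^{|\vec{E}_0|}$ acts freely on $V_{G^{\mathrm{per}}} = F_{ab}(E) \times V$ by translations in the first coordinate, with fundamental domain $\{0\} \times V$. The Pontryagin dual is the torus $\mathbb{T}^{|\vec{E}_0|}$, and the associated Fourier transform
\[
\mathcal{F}: \ell^2(V_{G^{\mathrm{per}}}) \to L^2(\mathbb{T}^{|\vec{E}_0|}; \mathbb{C}^V), \qquad (\mathcal{F}\phi)(z, i) = \sum_{u \in F_{ab}(E)} \phi(u, i)\, z^{-u},
\]
is unitary. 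A direct computation using the adjacency relation of $G^{\mathrm{per}}$---each neighbour of $(u,i)$ is of the form $(u+e,t(e))$ with $o(e) = i$---and the definition of $\mathcal{H}^{\mathrm{per}}$ as the pullback of $\mathcal{H}$ shows that the translation by $e \in \vec E_0$ diagonalises to multiplication by $z_e$, and hence $\mathcal{F} \mathcal{H}^{\mathrm{per}} \mathcal{F}^{-1}$ is the fibrewise multiplication $\psi(z) \mapsto \mathcal{H}(z)\psi(z)$, i.e.\ the direct integral $\int^{\oplus}_{\mathbb{T}^{|\vec{E}_0|}} \mathcal{H}(z)\, dz$, with exactly the $\mathcal{H}(z)$ in the statement.

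Standard direct integral theory then implies that $\lambda$ is an $\ell^2$ eigenvalue of $\mathcal{H}^{\mathrm{per}}$ iff the ``flat band'' set $S_\lambda := \{z \in \mathbb{T}^{|\vec{E}_0|} : \lambda \in \sigma(\mathcal{H}(z))\}$ has positive Haar measure, which settles the $(\Leftarrow)$ direction immediately. For $(\Rightarrow)$, I would upgrade ``positive measure'' to ``everywhere'' by noting that $S_\lambda$ is the zero set of $p(z) := \det(\mathcal{H}(z) - \lambda I)$, which is a Laurent polynomial in $(z_e)_{e \in \vec{E}_0}$ and hence real-analytic on the torus. A non-identically-zero real-analytic function on a connected real-analytic manifold has a zero set of measure zero, so $|S_\lambda| > 0$ forces $p \equiv 0$, i.e.\ $S_\lambda = \mathbb{T}^{|\vec{E}_0|}$.

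I do not foresee a serious obstacle: the Floquet identification is a routine Fourier computation on $\mathbb{Z}^{|\vec{E}_0|}$, and the analyticity upgrade is classical. The only mildly subtle point is the measurable eigenvector selection needed to convert $|S_\lambda| > 0$ into an honest $L^2$-eigenfunction of the direct integral; this can be handled by the Riesz projection $P_\lambda(z) = \frac{1}{2\pi i}\oint_{|\zeta - \lambda| = \varepsilon} (\zeta - \mathcal{H}(z))^{-1}\, d\zeta$, which is well-defined and depends measurably on $z$ on the open set where $\lambda$ is isolated in $\sigma(\mathcal{H}(z))$, and an eigenfunction can then be built by choosing any nonzero vector in its range.
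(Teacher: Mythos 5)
Your proposal is correct and takes essentially the same route as the paper: the paper likewise reduces to $\mathcal{H}^{\mathrm{per}}$ via Proposition~\ref{prop:components} and identifies $\mathcal{H}(z)$ as the Floquet matrix of the $\mathbb{Z}^{|E|}$-periodic realization of $G^{\mathrm{per}}$, but then cites \cite[Lemmas 2.1--2.3]{Sa.Yo2023} for precisely the direct-integral decomposition, the positive-measure eigenvalue criterion, and the analyticity upgrade that you carry out by hand. Your unpacking of those cited lemmas (including the measurable eigenvector selection) is accurate, so the two arguments differ only in how much of the standard Floquet theory is made explicit.
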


\begin{proof}
We will use \cite[Lemmas 2.1--2.3]{Sa.Yo2023} by reframing $G^\mathrm{per}$ explictly as a $\mathbb{Z}^{|E|}$-periodic graph in the same language as [ibid.]. For this construction, label the vertices of $G$ by $1,\ldots,n$ and define the fundamental cell to be 
    \begin{align*}
        V_f=\{(0,i)\in\mathbb{R}^{|E|}\times\mathbb{R}:i=1,\ldots,n\}.
    \end{align*}
The vertex set of the periodic graph is then $\mathbb{Z}^{|E|}\times V_f$ and the periodic coordinate vectors are simply the unit vectors $(f_e)_{e\in E}$ in the first $|E|$ coordinates. Each coordinate direction in the first $|E|$ coordinates corresponds to a generator of $F_\mathrm{ab}(E)$: the coordinate axis labelled by an edge in $E$ corresponds to its unique oriented edge in $\vec{E}_0$. 

The edges of the periodic graph are defined by joining $(a,i)\in \mathbb{Z}^{|E|}\times V_f$ to $(b,j)\in \mathbb{Z}^{|E|}\times V_f$ if either 
    \begin{enumerate}
        \item there exists $e\in\vec{E}_0$ such that $e$ joins $i$ to $j$ and $b=a+f_e$,
        \item or there exists $e\in\vec{E}_0$ such that $e$ joins $j$ to $i$ and $b=a-f_e$.
    \end{enumerate}
By construction, this graph is $\mathbb{Z}^{|E|}$ periodic and isomorphic to $G^\mathrm{per}$. The operator $\mathcal{H}$ lifts to $\mathcal{H}^\mathrm{per}$ on $G^\mathrm{per}$ and hence to this periodic graph. Moreover,  the Floquet matrix of this operator is precisely \(\mathcal{H}(z)\). So by \cite[Lemmas 2.1--2.3]{Sa.Yo2023}, $\lambda$ is an eigenvalue of $\mathcal{H}^\mathrm{per}$ if and only if it is an eigenvalue of $\mathcal{H}(z)$ for all $z=(z_{e})_{e\in \vec{E}_0}\in\mathbb{T}^{|\vec{E}_0|}$.

If $\lambda$ is an eigenvalue of $\mathcal{H}^{\mathrm{ab}}$ then it is clearly
an eigenvalue of $\mathcal{H}^{\mathrm{per}}$, since any corresponding $\ell^{2}$ eigenfunction can be realized as an $\ell^{2}$ function on a connected component of $G^{\mathrm{per}}$ by Proposition \ref{prop:components}, upon which $\mathcal{H}^{\ab}$ has the same action as $\mathcal{H}^{\mathrm{per}}$, and we simply let the eigenfunction be $0$ on any other connected components of $G^\mathrm{per}$. Conversely, if $\lambda$ is an eigenvalue of $\mathcal{H}^{\mathrm{per}}$ then the restriction of any corresponding eigenfunction to any connected component of $G^\mathrm{per}$ is also an eigenfunction with the same eigenvalue. Choose any connected component upon which the restriction is non-zero, then since by Proposition \ref{prop:components} the connected component is isomorphic to $G^{\mathrm{ab}}$, we can realize the eigenfunction as an eigenfunction on $G^{\mathrm{ab}}$ with the same eigenvalue. 
\end{proof}

\section{Criterion for eigenvalues in the maximal abelian cover}\label{Sec:thecriteria}

In this section we prove Theorem~\ref{thm:criteria}, our criterion for the eigenvalues of the maximal abelian cover. 

\subsection{Matchings and the generalized matching polynomial}\label{sec:matrev}

We first review several results concerning generalized matching polynomials, as studied in e.g. \cite{Av.Ma2007,He.Li1972, Ku.Wong2013}. In these prior works, the results are stated for simple graphs only and so for the sake of completeness and to maintain notational consistency throughout the article, we will prove the required extensions to multi-graphs (despite causing no additional difficulties). 

\begin{definition}
Let $G=(V,E)$ be a finite multi-graph. A \textbf{matching} of $G$ is a degree-$1$ subgraph of $G$. A \textbf{$k$-matching} is a matching with exactly $k$ edges and we see the empty graph as the unique \(0\)-matching. A \textbf{perfect matching} is a matching whose vertex set is $V$. Denote the collection of all matchings and perfect matchings of $G$ as $\mathcal{M}(G)$ and $\mathcal{PM}(G)$, respectively.
\end{definition}

 Given a matching $M$, we let $V_M\subseteq V$ denote its vertex set and $E_M\subseteq E, \vec{E}_M\subseteq \vec{E}$ be its edge and directed edge sets respectively. Recall the definition of the Schr\"{o}dinger operators $\mathcal{H}$ as in equation \eqref{eq:schrodinger_op}.

\begin{definition}\label{Def:generalisedmatchingpolynomial}
The generalized matching polynomial of $G$ with respect to $\mathcal{H}$, denoted as $m_G^{\mathcal{H}}$, is defined as
\[
m_G^{\mathcal{H}}(x)=\sum_{M\in \mathcal{M}(G)}(-1)^{\frac{|V_M|}{2}}\prod_{e\in \vec{E}_M}w_e\cdot \prod_{u\in V\setminus V_M}\left(x-\mathcal{V}_u\right).
\]
If \(G_0\) is a subgraph of \(G\), we abuse notation by denoting the generalized matching polynomial of \(G_0\) with the restriction of \(\mathcal{H}\) as  
\(
m^\mathcal{H}_{G_0} \equiv m^{\mathcal{H}|_{G_0}}_{G_0}.
\)
\end{definition}

Note that in the case where \(|V|=n\) and $\mathcal{V}_u\equiv 0$, $w_e\equiv 1$, we have 
$$m_G^{\mathcal{H}}(x)=\sum_{M\in \mathcal{M}(G)}(-1)^{\frac{|V_M|}{2}} x^{|V\setminus V_M|}=\sum_{k=0}^{\lfloor\frac{n}{2}\rfloor} (-1)^k\,\#\{k\text{-matchings of}\ G\}\cdot x^{n-2k},$$
which is the standard matching polynomial of the graph $G$ corresponding to the adjacency operator. See \cite{godsil1993algebraic,Gut17} for more background on this polynomial.

\begin{remark}
    The incorporation of the edge weights in Definition \ref{Def:generalisedmatchingpolynomial} is not the same as in the weighted matching polynomials defined by Heilmann and Lieb \cite[Section 2]{He.Li1972}. Instead, our weighted matching polynomial corresponds to the Heilmann-Lieb definition for the weighted graph whose edge weights are $|w_e|^2$ on the (undirected) edge $e$ which is natural when computing determinants later.
\end{remark}

The generalized matching polynomial satisfies the following recursion relation. This was shown for simple graphs without vertex weights in \cite{He.Li1972} and for simple graphs in \cite{Av.Ma2007}.

\begin{lemma}[Recursion relation of matching polynomials]
\label{lem:recursion}
For any $v\in V$, we have
   \begin{equation*}
       m_G^{\mathcal{H}}(x)=\left(x-\mathcal{V}_v\right)\cdot m_{G\setminus \{v\}}^{\mathcal{H}}(x)-\sum_{u\sim v, u\neq v} \left(\sum_{e'\in \vec{E};\, o(e)=v,\,t(e)=u}|w_{e}|^2\right)\cdot m_{G\setminus \{v,u\}}^{\mathcal{H}}(x).
   \end{equation*}
\end{lemma}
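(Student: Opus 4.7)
The plan is to split the sum defining $m_G^{\mathcal{H}}(x)$ according to whether the distinguished vertex $v$ is saturated by the matching $M$ or not, and identify each piece with a summand on the right-hand side. Formally, I would write
\[
m_G^{\mathcal{H}}(x) = \sum_{\substack{M\in\mathcal{M}(G) \\ v\notin V_M}} (-1)^{|V_M|/2} \prod_{e\in \vec{E}_M} w_e \prod_{u\in V\setminus V_M}(x-\mathcal{V}_u) \;+\; \sum_{\substack{M\in\mathcal{M}(G) \\ v\in V_M}} (-1)^{|V_M|/2} \prod_{e\in \vec{E}_M} w_e \prod_{u\in V\setminus V_M}(x-\mathcal{V}_u),
\]
and compute the two sums separately.

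For the first sum, matchings of $G$ that avoid $v$ are in bijection with matchings of $G\setminus\{v\}$ (since $v$ carries no self-loop contribution to the matching, as any self-loop at $v$ would force $\deg_M(v)=2$). Under this bijection, $V_M$, $\vec{E}_M$ are preserved, but the vertex product over $V\setminus V_M$ in $G$ acquires an extra factor $(x-\mathcal{V}_v)$ compared to the vertex product over $V_{G\setminus\{v\}}\setminus V_M$. This immediately yields $(x-\mathcal{V}_v)\cdot m_{G\setminus\{v\}}^{\mathcal{H}}(x)$.

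For the second sum, any $M$ saturating $v$ contains exactly one undirected edge $e_0\in E$ incident to $v$; write $u=t(e_0)$, where $e_0$ is oriented from $v$, and note that $u\neq v$ since self-loops cannot appear in a degree-$1$ subgraph. Removing $e_0$ from $M$ gives a matching $M'$ of $G\setminus\{v,u\}$; conversely, any choice of neighbor $u\neq v$, any undirected edge $e_0$ between $v$ and $u$, and any $M'\in\mathcal{M}(G\setminus\{v,u\})$ reconstructs such an $M$. I would then track the three factors under this decomposition: $|V_M|=|V_{M'}|+2$ contributes a sign $-(-1)^{|V_{M'}|/2}$; the directed edge set satisfies $\vec{E}_M=\vec{E}_{M'}\sqcup\{e_0,e_0^{-1}\}$, so the edge-weight product picks up $w_{e_0}\overline{w_{e_0}}=|w_{e_0}|^2$ using the self-adjointness relation $w_{e^{-1}}=\overline{w_e}$; and the vertex product over $V\setminus V_M$ agrees with that over $V_{G\setminus\{v,u\}}\setminus V_{M'}$. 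Summing first over $M'$, then over $e_0$ between $v$ and $u$, and finally over $u\sim v$ with $u\neq v$ reproduces the second term of the claimed recursion.

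The only mildly delicate point is the bookkeeping of directed versus undirected edges: one must verify that summing over directed edges from $v$ to $u$ in the stated recursion corresponds exactly to summing over the undirected edges of $E$ joining $v$ and $u$ (each such undirected edge has a unique orientation with origin $v$), so that the coefficients $|w_{e_0}|^2$ match. Once this identification is made, the two cases combine into the asserted identity.
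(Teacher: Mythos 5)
Your proposal is correct and follows essentially the same route as the paper: split the defining sum over matchings according to whether $v$ is saturated, identify the unsaturated part with $(x-\mathcal{V}_v)\,m_{G\setminus\{v\}}^{\mathcal{H}}(x)$, and for the saturated part peel off the matching edge at $v$ to pick up the factor $|w_{e}|^2$ (from the two orientations $w_e\,\overline{w_e}$) and the sign flip from $|V_M|=|V_{M'}|+2$. The bookkeeping you flag — that self-loops cannot occur in a matching and that directed edges from $v$ to $u\neq v$ correspond to the undirected edges joining them — is handled identically in the paper's proof.
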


\begin{proof}
    The proof proceeds by considering whether the vertex $v$ is in the vertex set of a given matching. Namely, we have
    \begin{align*}
        m_G^{\mathcal{H}}(x)
        &=\sum_{M\in \mathcal{M}(G),\, v\notin V_M}(-1)^{\frac{|V_M|}{2}}\prod_{e'\in \vec{E}_M}w_{e'}\cdot \prod_{i\in V\setminus V_M}\left(x-\mathcal{V}_i\right)\\
        &\quad +\sum_{M\in \mathcal{M}(G),\,v\in V_M}(-1)^{\frac{|V_M|}{2}}\prod_{e'\in \vec{E}_M}w_{e'}\cdot \prod_{i\in V\setminus V_M}\left(x-\mathcal{V}_i\right)=:S_1 + S_2.
        \end{align*}
        We simplify the first sum \(S_1\) as follows:
        \begin{align*}
        S_1&=\left(x-\mathcal{V}_v\right)\cdot\sum_{M\in \mathcal{M}(G\setminus\{v\})}(-1)^{\frac{|V_M|}{2}}\prod_{e'\in \vec{E}_M}w_{e'}\cdot \prod_{i\in \left(V\setminus\{v\}\right)\setminus V_M}\left(x-\mathcal{V}_i\right)\\
        &=\left(x-\mathcal{V}_v\right)\cdot m_{G\setminus \{v\}}^{\mathcal{H}}(x).
    \end{align*}
		If $v\in M$, it is matched to one of its neighbors through an edge $e$, which cannot be a self-loop. We thus have
    \begin{align*}
        S_2&=\sum_{\substack{e\in\vec{E},\, o(e)=v,\\ e\ \text{is not a self-loop}}}\ \sum_{\substack{M\in \mathcal{M}(G),\\ v\in V_M,\, e\in \vec{E}_M}}(-1)^{\frac{|V_M|}{2}}\prod_{e'\in \vec{E}_M}w_{e'}\cdot \prod_{i\in V \setminus V_M}\left(x-\mathcal{V}_i\right).
	\end{align*}
    After factoring out the contributions of $w_e$ and $w_{e^{-1}}$ in the interior sum, the remaining terms in the product of $w_{e'}$ run over the same matching but with $e=\{v,t(e)\}$ removed. Hence,
      \begin{align*} 
			S_2 =&\sum_{\substack{e\in\vec{E},\, o(e)=v,\\ e\ \text{is not a self-loop}}}\,|w_{e}|^2 \sum_{M\in \mathcal{M}(G\setminus\{v, t(e)\})}(-1)^{\frac{|V_M|}{2}+1}\prod_{e'\in \vec{E}_M}w_{e'}\cdot \prod_{i\in \left(V\setminus\{v, t(e)\}\right)\setminus V_M}\left(x-\mathcal{V}_i\right)\\
        =&\sum_{u\sim v, u\neq v}\, \sum_{e\in \vec{E};\, o(e)=v,\,t(e)=u}|w_{e}|^2 \sum_{M\in \mathcal{M}(G\setminus\{v, u\})}(-1)^{\frac{|V_M|}{2}+1}\prod_{e'\in \vec{E}_M}w_{e'}\cdot \prod_{i\in \left(V\setminus\{v, u\}\right) \setminus V_M}\left(x-\mathcal{V}_i\right)\\
        =&-\sum_{u\sim v, u\neq v}\ \left(\sum_{e\in \vec{E};\, o(e)=u,\,t(e)=v}|w_{e}|^2 \right) \cdot m_{G\setminus \{u,v\}}^{\mathcal{H}}(x).
    \end{align*}
		where the exponent on $-1$ changed because the new matching contains two fewer vertices.    
\end{proof}

\begin{remark}
     We will mostly use the recursion relation for the case when $v$ is degree $1$ in which case it simplifies to 
     $$ m_G^{\mathcal{H}}(x)=\left(x-\mathcal{V}_v\right)\cdot m_{G\setminus \{v\}}^{\mathcal{H}}(x)- |w_{e}|^2 \cdot m_{G\setminus \{u,v\}}^{\mathcal{H}}(x),$$
    where $u$ is the neighbour of $v$ and \(e\) is the directed edge joining $u$ and $v$. Later, we will write \(\left|w_{e}\right|\) simply as \(\left|w_{\{u,v\}}\right|\).

\end{remark}

Towards a computation of the determinant of $\mathcal{H}(z)$ as is required by Proposition \ref{prop:eigenvalue-correspondence}, we introduce the following definition of an oriented degree-$2$ subgraph.
\begin{definition}
    Let $G=(V,E)$ be a finite multi-graph. An \textbf{oriented degree-$2$ subgraph} $\gamma=\big(V_\gamma, \vec{E}_\gamma \big)$ of $G$ is a degree-$2$ subgraph of $G$ with directed edges $\vec{E}_\gamma\subseteq \vec{E}$ that give a consistent orientation to each of its connected components. Denote the collection of oriented degree-$2$ subgraphs of $G$ by $\Gamma=\Gamma_G$.
\end{definition}
 For an oriented degree-2 subgraph $\gamma=\left(V_\gamma, \vec{E}_\gamma \right)$, define 
     \(w_\gamma=\prod_{e\in \vec{E}_\gamma}w_{e}\).
The determinant of $\mathcal{A}$ is then given by the following.
\begin{lemma}\label{lemma:edgeweightexpansion}
     For $\mathcal{A}=\sum_{e\in \Vec{E}}w_eA_e$, we have
     $$\mathrm{det} \,\mathcal{A}=\sum_{\gamma \in \Gamma}(-1)^{\frac{|V|+|V_\gamma|}{2}+\mathrm{cc}(\gamma)}\left(\sum_{M\in \mathcal{PM}(G\setminus \gamma)}\prod_{e\in \vec{E}_M}w_e\right) w_{\gamma},$$
where $\mathrm{cc}(\gamma)$ is the number of connected components of $\gamma$.
\end{lemma}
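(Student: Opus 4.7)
The plan is to start from the Leibniz formula
$$\det \mathcal{A} \;=\; \sum_{\sigma \in S_V} \mathrm{sgn}(\sigma) \prod_{v \in V} \mathcal{A}_{v,\sigma(v)} \;=\; \sum_{\sigma \in S_V} \mathrm{sgn}(\sigma) \sum_{\Phi} \prod_{v \in V} w_{\Phi(v)},$$
where the inner sum runs over all maps $\Phi : V \to \vec{E}$ with $o(\Phi(v))=v$ and $t(\Phi(v))=\sigma(v)$. Each pair $(\sigma,\Phi)$ picks out a functional oriented subgraph of $G$ in which every vertex has out-degree exactly $1$, and the cycle decomposition of $\sigma$ is realized as a disjoint union of oriented closed walks traced by the edges $\Phi(v)$.

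The next step is to decompose such a pair $(\sigma,\Phi)$ into a perfect matching and an oriented degree-$2$ subgraph. Inside each $2$-cycle $(uv)$ of $\sigma$ exactly one of the following holds: either $\Phi(u)=\Phi(v)^{-1}$, in which case the same undirected edge of $G$ is used in both directions, or $\Phi(u)$ and $\Phi(v)$ are two distinct parallel edges between $u$ and $v$. I would gather all $2$-cycles of the first kind into a subgraph $M$, and let $\gamma$ consist of every other cycle of $\sigma$: fixed points together with a chosen (oriented) self-loop, $2$-cycles using a pair of distinct parallel edges, and cycles of length $\geq 3$. Then $\gamma\in\Gamma$ with vertex set $V_\gamma=V\setminus V_M$, and $M\in\mathcal{PM}(G\setminus\gamma)$. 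This map is a bijection: from any pair $(\gamma,M)$ the permutation $\sigma$ is read off from the cycles of $\gamma$ together with the transpositions in $M$, and $\Phi$ is recovered by taking $\vec{E}_\gamma$ on $V_\gamma$ and assigning the two orientations of each matching edge on $V_M$.

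It then remains to verify that weights and signs line up. The edge-weight contribution splits as $\bigl(\prod_{e\in\vec{E}_M}w_e\bigr)\cdot w_\gamma$, because each matching edge of $M$ contributes $w_e\,w_{e^{-1}}$, which equals $\prod_{e\in\vec{E}_M}w_e$ under the convention (as in the definition of the generalized matching polynomial) that $\vec{E}_M$ collects both orientations of each undirected matching edge. For the signs, $\mathrm{sgn}(\sigma)=(-1)^{|V|-c(\sigma)}$ with $c(\sigma)$ the number of cycles of $\sigma$, and the decomposition gives $c(\sigma) = |M| + \mathrm{cc}(\gamma) = \tfrac{1}{2}(|V|-|V_\gamma|)+\mathrm{cc}(\gamma)$, so
$$\mathrm{sgn}(\sigma) \;=\; (-1)^{|V|-c(\sigma)} \;=\; (-1)^{(|V|+|V_\gamma|)/2 + \mathrm{cc}(\gamma)},$$
which is precisely the prefactor in the claim. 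The exponent is a well-defined integer on every contributing term, since $\mathcal{PM}(G\setminus\gamma)$ is nonempty only when $|V|-|V_\gamma|$ is even.

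The main obstacle I expect is purely combinatorial bookkeeping: one needs to take care that self-loops (which show up as length-$1$ components of $\gamma$) and pairs of parallel edges forming length-$2$ cycles are consistently placed into $\gamma$ rather than into $M$, so that the correspondence between $(\sigma,\Phi)$ and $(\gamma,M)$ is a genuine bijection without any double counting, and that the orientation on $\gamma$ is fully recorded by $\Phi|_{V_\gamma}$. Once this decomposition is pinned down, the sign and weight calculations above close the argument.
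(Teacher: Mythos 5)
Your proposal is correct and follows essentially the same route as the paper: a Leibniz expansion over permutations together with edge assignments, a bijection between contributing terms and pairs $(\gamma,M)$ of an oriented degree-$2$ subgraph and a perfect matching on its complement, and the same sign count (you compute $\mathrm{sgn}(\sigma)$ via the number of cycles, the paper via composing $\mathrm{sgn}(\gamma)$ with the matching involutions — these are equivalent). The bookkeeping points you flag (self-loops as fixed points, parallel-edge $2$-cycles going into $\gamma$ rather than $M$) are exactly the ones the paper's proof addresses.
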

     
\begin{proof} We expand the determinant as
    \begin{align*}
\mathrm{det}\,\mathcal{A}&=\sum_{\sigma\in S_n} {\mathrm{sgn}(\sigma)}\mathcal{A}_{1,\sigma(1)}\mathcal{A}_{2,\sigma(2)}\cdots \mathcal{A}_{n,\sigma(n)}\\
    &=\sum_{(e_1,e_2,\cdots,e_n)\in \vec{E}^n}\,\sum_{\sigma\in S_n} {\mathrm{sgn}(\sigma)}w_{e_1}w_{e_2}\cdots w_{e_n}(A_{e_1})_{1,\sigma(1)}(A_{e_2})_{2,\sigma(2)}\cdots (A_{e_n})_{n,\sigma(n)}.
    \end{align*}
Given any $(e_1,\ldots, e_n)\in \vec{E}^n$, there exists a  $\sigma \in S_n$ such that 
    \[
		(A_{e_1})_{1,\sigma(1)}(A_{e_2})_{2,\sigma(2)}\cdots (A_{e_n})_{n,\sigma(n)}\neq 0,
		\]
		if and only if \(o(e_i)=i\) and the termini of \(e_i\) are all distinct, which gives the permutation \(\sigma\) by mapping \(i\) to \(t(e_i)\).
        In this case, we obtain a decomposition $\{e_1,\ldots,e_n\}=\vec{E}_1 \sqcup \vec{E}_2$, where $\vec{E}_1$ is symmetric, i.e., $e\in \vec{E}_1$ implies that $e^{-1} \in \vec{E}_1$, and $\vec{E}_2=\vec{E}_\gamma$ for some oriented degree-2 subgraph $\gamma$ of $G$, and possibly one of the two sets is empty. To see this, note that in the (disjoint) cycle decomposition of $\sigma$, cycles of length $3$ or more correspond to oriented cycles in $G$, transpositions either correspond to a matching or a cycle of length $2$ if there are multi-edges,
        and the fixed points correspond to self-loops. Self-loops and cycles all contribute to $\vec{E}_2$, and the rest, the symmetric set $\vec{E}_1$, defines a matching on $G$, which is perfect on $G\setminus \gamma$, as $\sigma$ is a bijection.

Such a $\sigma \in S_n$ is unique if it exists and we have 
\[
 (A_{e_1})_{1,\sigma(1)}(A_{e_2})_{2,\sigma(2)}\cdots (A_{e_n})_{n,\sigma(n)}=1.
\]

Note that $\sigma$ is a composition of $\gamma$ (viewed as a permutation by mapping each vertex $i\in V_\gamma$ to the terminus of the directed edge originating from $i$ in \(\gamma\) and fixing the vertices in $V \setminus V_\gamma$) and $\frac{|V|-|V_\gamma|}{2}$ involutions from the matching. The signature of $\gamma$ satisfies
\[\mathrm{sgn}(\gamma)=(-1)^{|V_\gamma|-\mathrm{cc}(\gamma)},\]
where \(\mathrm{cc}(\gamma)\) is number of connected components of \(\gamma\). Therefore, $\mathrm{sgn}(\sigma)=(-1)^{\frac{|V|+|V_\gamma|}{2}+\mathrm{cc}(\gamma)}$.

Conversely, it is easy to see that for an oriented degree-$2$ subgraph $\gamma$ in $G$ and a perfect matching $M$ on $G \setminus \gamma$, there exists a unique tuple $(e_1,e_2,\cdots,e_n)\in \vec{E}^n$ such that $\{e_1,e_2,\cdots,e_n\}=\vec{E}_M \sqcup \vec{E}_{\gamma}$ and a permutation \(\sigma\) such that
    $$(A_{e_1})_{1,\sigma(1)}(A_{e_2})_{2,\sigma(2)}\cdots (A_{e_n})_{n,\sigma(n)}=1.$$
Put together, we obtain a bijection between the set of pairs $((e_1,\ldots,e_n),\sigma)\in \vec{E}^n\times S_n$ for which the summand is non-zero and pairs $(\gamma,M)\in \Gamma\times\mathcal{PM}(G\setminus\gamma)$, and the previous computations of the summand give precisely the lemma.
\end{proof}

The previous lemma can also be used to compute the determinant of $\mathcal{A}(z)$ by replacing the weights $w_e$ by $w_ez_e$ in the expansion. We use Lemma \ref{lemma:edgeweightexpansion} to expand the characteristic polynomial of the Schr\"{o}dinger operator on a finite multi-graph as follows.

\begin{proposition}\label{prop:expansionofschrodinger}
Let $G=(V,E)$ be a finite multi-graph. The characteristic polynomial of $\mathcal{H}$ on $G$ has the following expansion
 \begin{equation}
        \mathrm{det}\left(\lambda I-\mathcal{H}\right)=\sum_{\gamma \in \Gamma} (-1)^{\mathrm{cc} (\gamma)}m^{\mathcal{H}}_{G\setminus \gamma}(\lambda) w_{\gamma},
    \end{equation}
where $\mathrm{cc}(\gamma)$ is the number of connected components of $\gamma$. 
\end{proposition}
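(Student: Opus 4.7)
The plan is to extend the combinatorial expansion used in Lemma~\ref{lemma:edgeweightexpansion} to accommodate the diagonal potential entries. Write $\lambda I - \mathcal{H} = (\lambda I - \mathcal{V}) - \mathcal{A}$, where $\lambda I - \mathcal{V}$ is diagonal with entries $\lambda - \mathcal{V}_v$. In the Leibniz expansion of $\det(\lambda I - \mathcal{H})$, each diagonal entry $(\lambda - \mathcal{V}_v) - \mathcal{A}_{vv}$ naturally splits into the scalar part $(\lambda - \mathcal{V}_v)$ and the self-loop part $-\mathcal{A}_{vv}$, while every other entry $(\lambda I - \mathcal{H})_{u v}$ contributes $-w_e$ for a directed edge $e$ from $u$ to $v$. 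Distributing the products across these binary choices at every diagonal position yields the identity
\[
\det(\lambda I - \mathcal{H}) \;=\; \sum_{D \subseteq V} \Bigl(\prod_{v \in D}(\lambda - \mathcal{V}_v)\Bigr)\cdot \det\bigl(-\mathcal{A}|_{V\setminus D}\bigr),
\]
where $D$ is the set of diagonal positions at which we selected the scalar term, and $\mathcal{A}|_{V\setminus D}$ is the principal submatrix indexed by $V\setminus D$.

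Next I would apply Lemma~\ref{lemma:edgeweightexpansion} to each $\det(\mathcal{A}|_{V\setminus D})$, pulling out the global sign $(-1)^{|V\setminus D|}$, to write it as a sum over oriented degree-$2$ subgraphs $\gamma$ of $G[V\setminus D]$ paired with perfect matchings $M$ of $(G[V\setminus D])\setminus \gamma$. I would then swap the order of summation and reindex by pairs $(\gamma,M)$ with $\gamma \in \Gamma_G$ and $M \in \mathcal{M}(G\setminus \gamma)$; the set $D = V \setminus V_\gamma \setminus V_M$ is then uniquely determined, and conversely every such pair arises for exactly one $D$. After this reindexing, the inner sum over $M$ for fixed $\gamma$ can be matched term-for-term with the definition of $m^{\mathcal{H}}_{G\setminus \gamma}(\lambda)$ from Definition~\ref{Def:generalisedmatchingpolynomial}, once all signs are collected.

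The main obstacle is the careful bookkeeping of signs. Aggregating: the factor $(-1)^{|V\setminus D|} = (-1)^{|V_\gamma| + |V_M|}$ from $\det(-\mathcal{A}|_{V\setminus D})$, together with the factor $(-1)^{(|V\setminus D|+|V_\gamma|)/2 + \mathrm{cc}(\gamma)} = (-1)^{|V_\gamma|+|V_M|/2+\mathrm{cc}(\gamma)}$ supplied by Lemma~\ref{lemma:edgeweightexpansion}, and using that $|V_M|$ is even (so $(-1)^{|V_M|}=1$ and $(-1)^{2|V_\gamma|}=1$), must collapse to precisely $(-1)^{|V_M|/2 + \mathrm{cc}(\gamma)}$. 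Only then does the inner sum reassemble as $m^{\mathcal{H}}_{G\setminus \gamma}(\lambda)$, leaving the overall sign $(-1)^{\mathrm{cc}(\gamma)}$ outside. A subtlety is that self-loops in $\gamma$ correspond to fixed points of the underlying permutation $\sigma$ on $V\setminus D$ and so contribute trivially to $\mathrm{sgn}(\sigma)$; one must verify that this is consistent with the $|V_\gamma| - \mathrm{cc}(\gamma)$ accounting already baked into Lemma~\ref{lemma:edgeweightexpansion}, so that no additional correction is needed. Once these parities line up, the proposition follows directly.
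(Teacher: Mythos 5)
Your proposal is correct and follows essentially the same route as the paper: the identity $\det(\lambda I-\mathcal{H})=\sum_{D\subseteq V}\bigl(\prod_{v\in D}(\lambda-\mathcal{V}_v)\bigr)\det(-\mathcal{A}|_{V\setminus D})$ is exactly the paper's expansion of a determinant of a diagonal-plus-matrix sum over principal minors, and your subsequent application of Lemma~\ref{lemma:edgeweightexpansion}, reindexing by pairs $(\gamma,M)$ with $D=V\setminus(V_\gamma\sqcup V_M)$, and sign bookkeeping all match the paper's computation. The parity check you flag does work out as you state, so no correction is needed.
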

\begin{proof}
We make use of the following fact: Let $A$ be an $n\times n$ matrix, then for any $D=\mathrm{diag}\left(d_1,d_2,\cdots,d_n\right)$, we have 
$$\mathrm{det}(D+A)=\sum_{S\subset [1,n]} \left(\prod_{i\in [1,n]\setminus S}d_i\right) \mathrm{det}A_S,$$
where $\mathrm{det}\,A_S$ is the principal minor of $A$ corresponding to the index set $S$. 

Recall that for $S\subseteq V$, $G[S]$ denotes the subgraph of $G$ induced on the vertex set $S$. The proof is then a direct result of the fact above and Lemma \ref{lemma:edgeweightexpansion}:
     \begin{align*}
     &\mathrm{det}\left(\lambda I-\mathcal{H}\right)\\
        =&(-1)^{|V|}\cdot\sum_{S\subset V} \left(\prod_{u\in V\setminus S} \left(\mathcal{V}_u-\lambda\right)\right) \mathrm{det}\mathcal{A}_S\\
    =&(-1)^{|V|}\cdot\sum_{S\subset V} \left(\prod_{u\in V\setminus S} \left(\mathcal{V}_u-\lambda\right)\right)\sum_{\gamma \in \Gamma_{G[S]}} (-1)^{\frac{|S|+ |V_\gamma|}{2}+\mathrm{cc}(\gamma)}\, w_{\gamma}\sum_{M\in \mathcal{PM}\left(G[S]\setminus \gamma\right)}\prod_{e\in \vec{E}_M}w_e  \\
        =&(-1)^{|V|}\cdot\sum_{\gamma \in \Gamma_{G}}w_{\gamma}  \sum_{S\subset V: V_\gamma \subset S } (-1)^{\frac{|S|- |V_\gamma|}{2}+ |V_\gamma|+\mathrm{cc}(\gamma)}\,\left(\prod_{u\in V\setminus S} \left(\mathcal{V}_u-\lambda\right)\right)\sum_{M\in \mathcal{PM}(G[S]\setminus \gamma)}\prod_{e\in \vec{E}_M}w_e\\
    =&(-1)^{|V|}\cdot\sum_{\gamma \in \Gamma_{G}}w_{\gamma}  \sum_{M\in \mathcal{M}\left(G\setminus \gamma \right)}(-1)^{\frac{|V_M|}{2}+|V_{\gamma}|+\mathrm{cc}(\gamma)} \left(\prod_{u\in V\setminus \left(V_\gamma \sqcup V_M \right)}\left(\mathcal{V}_u-\lambda\right)\right)  \prod_{e\in \Vec{E}_M}w_e\\
    =&\sum_{\gamma \in \Gamma_G} (-1)^{\mathrm{cc}( \gamma)}m^{\mathcal{H}}_{G\setminus \gamma}(\lambda)  w_{\gamma}.
    \end{align*}
\end{proof}

\begin{remark}
    From the Möbius inversion formula, we have that 
    \begin{equation}\label{eq:matchingexpansion}
        m^{\mathcal{H}}_{G}(\lambda)=\sum_{\gamma \in \Gamma} \mathrm{det}\left(\lambda I-\mathcal{H}|_{G\setminus \gamma}\right)  w_{\gamma}.
    \end{equation} 
\end{remark}

As with Lemma \ref{lemma:edgeweightexpansion}, we may replace the weights $w_e$ by $w_ez_e$ in the previous lemma to obtain the analogous expansion for $\mathrm{det}\left(\lambda I-\mathcal{H}(z)\right)$.

\subsection{Proof of the general criterion}
We are ready to combine the results of Sections~\ref{Sec:constructionofmaxabelian} and \ref{sec:matrev} to prove our first result.

\begin{proof}[Proof of Theorem \ref{thm:criteria}]
By Proposition \ref{prop:eigenvalue-correspondence}, $\lambda$ is an eigenvalue for $\mathcal{H}$ on $G^\mathrm{ab}$ if and only if it is an eigenvalue of $\mathcal{H}(z)$ for all $z\in \mathbb{T}^{|\vec{E}_0|}$. From Proposition \ref{prop:expansionofschrodinger}, we have
    \begin{align*}
        \mathrm{det} \left(\lambda I-\mathcal{H}(z)\right)=\sum_{\gamma \in \Gamma} (-1)^{\mathrm{cc}(\gamma)}m^{\mathcal{H}}_{G\setminus \gamma}(\lambda)  w_{\gamma} z_{\gamma},
    \end{align*}
where for a directed cycle $\gamma=\left(V_\gamma, \vec{E}_\gamma \right)$, we set $z_\gamma:=\prod_{e\in \vec{E}}z_{e}$ and $z_{e^{-1}}:=\overline{z_e}$ for all $e\in\vec{E}_0$. As functions in $L^2(\mathbb{T}^{|\vec{E}_0|})$, the $z_\gamma$ are mutually orthogonal; thus, $\mathrm{det} \left(\lambda I-\mathcal{H}(z)\right)\equiv 0$ in $L^2(\mathbb{T}^{|\vec{E}_0|})$ if and only if all of the coefficients of the $z_\gamma$ are zero. However, this holds if and only if $\lambda$ is a root of $m_{G\setminus\gamma}^\mathcal{H}(x)$ for every degree-$2$ subgraph $\gamma$ because $w_\gamma\neq 0$.
\end{proof}

\begin{remark}\label{remark:cha=mat}
    Combining Theorem \ref{thm:criteria} with  \eqref{eq:matchingexpansion}, we deduce that $\lambda$ is an eigenvalue of $\mathcal{H}^{\mathrm{ab}}$ if and only if it is an \emph{eigenvalue} of $\mathcal{H}|_{G\setminus \gamma}$ for any degree-$2$ subgraph $\gamma$, as mentioned in Section~\ref{Sec:Intro}.
\end{remark}

As shown in \cite[Lemma 3.6]{marcus2015interlacing}, see also \cite[Chapter 6, Theorems 1.1 and 1.2]{godsil1993algebraic}, the roots of the standard matching polynomial of a finite graph 
\(G\) are bounded in absolute value by the spectral radius of its universal cover. This result extends to generalized matching polynomials without difficulty. Thus a direct consequence of Theorem \ref{thm:criteria} is that all eigenvalues of $\mathcal{H}^{\mathrm{ab}}$, if they exist, lie within the Ramanujan bound of $G$ with respect to $\mathcal{H}$. More precisely, we have the following.

\begin{corollary}
\label{cor:ram-bound}
    Let $\lambda$ be an eigenvalue of $\mathcal{H}^{\mathrm{ab}}$, then
      $|\lambda|\leq \rho$ where $\rho$ is the spectral radius of $\mathcal{H}^{\mathrm{uni}}$, the pull-back of $\mathcal{H}$ to the universal cover of $G$.
\end{corollary}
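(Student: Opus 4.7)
The plan is a short two-step argument. First, I would apply Theorem~\ref{thm:criteria} to the empty degree-$2$ subgraph $\gamma=\emptyset$ (admissible by the convention, stated just after the theorem, that the empty subgraph is $2$-regular). This forces every eigenvalue $\lambda$ of $\mathcal{H}^{\mathrm{ab}}$ to be a root of $m_G^{\mathcal{H}}$, reducing the corollary to the purely combinatorial claim that every root of $m_G^{\mathcal{H}}$ has absolute value at most $\rho=\|\mathcal{H}^{\mathrm{uni}}\|$.

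For the second step I would invoke the extension to generalized matching polynomials of Godsil's path-tree argument \cite[Chapter 6]{godsil1993algebraic} (see also \cite[Lemma 3.6]{marcus2015interlacing}). In the classical setting, one builds a finite path tree $T_v$ rooted at a vertex $v\in V$ whose vertices are simple paths in $G$ starting at $v$ and whose edges connect paths differing by a one-step extension; by iterating the vertex-deletion recursion (here Lemma~\ref{lem:recursion}) in parallel with the analogous Schur-complement recursion for $\det(xI-\mathcal{H}|_{T_v})$ — which, on a tree, coincides with the matching polynomial — one shows that $m_G^{\mathcal{H}}(x)$ divides $\det(xI-\mathcal{H}|_{T_v})$. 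Transporting the weights to $T_v$ in the obvious way (each pulled-back directed edge retains its weight $w_e$, and each vertex $\{e_1,\ldots,e_n\}\in V_{T_v}$ inherits the potential $\mathcal{V}_{t(e_n)}$) preserves both recursions verbatim, so the divisibility goes through unchanged. Since $T_v$ embeds as a finite induced subtree of the universal cover $\tilde G$, the self-adjoint matrix $\mathcal{H}|_{T_v}$ has operator norm at most $\|\mathcal{H}^{\mathrm{uni}}\|=\rho$, so every root of $m_G^{\mathcal{H}}$ lies in $[-\rho,\rho]$ and the corollary follows.

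The only substantive point to verify is that the classical path-tree identity survives the presence of edge/vertex weights and, in the multi-graph setting, multi-edges and self-loops. Multi-edges are harmless: they simply add parallel children to $T_v$ carrying their respective weights $w_e$. Self-loops cannot appear in any matching but do enter Lemma~\ref{lem:recursion} through the diagonal term, which is naturally absorbed as an effective shift of $\mathcal{V}_v$ at the vertex carrying the loop. Neither adjustment is conceptually new, and — in line with the authors' remark that the generalization is routine — they do not obstruct the transfer of the divisibility argument or the resulting spectral bound.
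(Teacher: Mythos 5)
Your proposal is correct and takes essentially the same route as the paper, which likewise deduces the corollary by applying Theorem~\ref{thm:criteria} to the empty degree-$2$ subgraph (so $\lambda$ must be a root of $m_G^{\mathcal{H}}$) and then invoking the extension to generalized matching polynomials of the path-tree bound from \cite{marcus2015interlacing,godsil1993algebraic}. The only (harmless) inaccuracy is your treatment of self-loops: they do not enter Lemma~\ref{lem:recursion} at all --- the sum there excludes $u=v$ --- so the generalized matching polynomial simply discards them rather than absorbing them as a shift of $\mathcal{V}_v$.
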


\section{Maximal abelian covers of regular multi-graphs}\label{Sec:atomthm}

In this section we will prove Theorem \ref{thm:regular}. When the graph is even regular, the proof will be a straightforward consequence of the graph having a 2-factor, and so we will focus only on the case where the graph is odd regular. To this end, we begin by defining some notions regarding the structure of multi-graphs $G$.

\begin{definition}
Define the \textbf{leaf set} of $G$ as
    $$L_G:=\{v\in V_G: \mathrm{deg}_G(v)=1\}.$$
Each $v\in L_G$ is called a \textbf{leaf}.
\end{definition}

\begin{definition}
    A \textbf{bridge} in a multi-graph $G$ is an edge that disconnects the graph when removed and \(G\) is called bridge-less if no such edge exists. A \textbf{bridge-block} in $G$ is a connected component of the graph obtained after removing all of its bridges. Note that when removing a bridge we always keep the underlying vertices. 
\end{definition}

It is easy to see that no cycle in a graph can cross a bridge and conversely, every edge that is not a bridge is contained in a cycle. A bridge-block could be a singleton and this occurs precisely when all of the incident edges to a vertex are bridges.

\begin{lemma}
    The induced subgraph on each bridge-block is bridge-less.
\end{lemma}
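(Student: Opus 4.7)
The plan is to prove this by a contradiction argument combined with the basic fact that an edge is a bridge if and only if it is not contained in any cycle. Let $G'$ denote the graph obtained from $G$ by removing all bridges (keeping all vertices), and let $B$ be a bridge-block, i.e., a connected component of $G'$, with vertex set $V_B$. We want to show that $G[V_B]$ has no bridge.

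First, I would establish a preliminary observation that $G[V_B]$ coincides with $B$ as a subgraph. Every edge of $B$ is clearly in $G[V_B]$. Conversely, suppose an edge $e$ of $G$ has both endpoints $u,v$ in $V_B$. If $e$ were a bridge of $G$, then $G \setminus \{e\}$ would separate $u$ from $v$; but $G' \subseteq G \setminus \{e\}$, so $u$ and $v$ would lie in different components of $G'$, contradicting $u,v \in V_B$. Hence $e$ is not a bridge of $G$, so it is an edge of $G'$, and by connectedness of $B$ it is actually an edge of $B$.

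Now suppose, towards a contradiction, that some edge $e$ is a bridge of the (induced) bridge-block $G[V_B] = B$. Since $e$ is an edge of $B$, it is not a bridge of $G$; therefore $e$ is contained in some cycle $C$ in $G$. The key point is that every edge of a cycle is a non-bridge (removing an edge of a cycle leaves the two endpoints connected via the remaining cycle), so all edges of $C$ lie in $G'$. Because $C$ is connected and shares a vertex with $B$, the entire cycle $C$ lies in the connected component $B$ of $G'$. This exhibits $e$ as an edge of a cycle in $B$, contradicting the assumption that $e$ is a bridge of $B$.

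I don't anticipate a substantial obstacle: the whole argument rests on the standard equivalence ``bridge $\Leftrightarrow$ not on any cycle,'' which works identically for multi-graphs (self-loops and parallel edges never create bridges, since a self-loop is trivially on a cycle and a parallel edge pair forms a length-$2$ cycle). The only small care needed is to verify that $G[V_B]$ introduces no extra edges beyond those of $B$, which is exactly the preliminary observation above.
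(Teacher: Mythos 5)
Your proof is correct and follows essentially the same route as the paper: a non-bridge edge of $G$ lies on a cycle, and since cycles cannot cross bridges, that cycle stays inside the bridge-block, contradicting the edge being a bridge there. The preliminary check that the induced subgraph $G[V_B]$ adds no edges beyond those of the component $B$ is a reasonable extra verification that the paper leaves implicit.
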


\begin{proof}
    Suppose for a contradiction that there is an edge $e$ that is not a bridge in $G$ but it is a bridge in a bridge-block after removal of all bridges of $G$. Since $e$ is itself not a bridge of $G$, there is a cycle in \(G\) traversing it. As cycles cannot cross bridges, the cycle lives in the bridge-block, a contradiction.

\end{proof}

\begin{definition}
    Define the \textbf{bridge-block tree} of a multi-graph $G$ to be the graph whose vertices are the bridge-blocks of $G$, and there is an edge between two vertices if and only if there is a bridge between the corresponding bridge-blocks in $G$.
\end{definition}

Note that every edge of the bridge-block tree is a bridge by construction, thus it can have no cycles and is a tree if the graph is connected, and a forest otherwise, see Figure \ref{bridgeblocktree}. 

\begin{figure}[h!]
\centering
\includegraphics[]{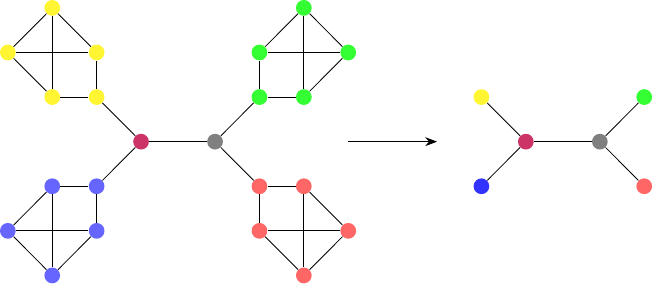}
\caption{Left: A cubic graph. Right: The associated bridge-block tree.}
\label{bridgeblocktree}
\end{figure}

Let \(G\) be a finite \(d\)-regular multi-graph with \(d\) odd.  
Each bridge-block of \(G\) falls into one of the following two categories:
\begin{itemize}
    \item every vertex of the block is incident to a bridge;
    \item at least one vertex of the block is not incident to a bridge.
\end{itemize}
A bridge-block is of the first type if and only if its maximum degree is strictly less than \(d\), and of the second type if and only if its maximum degree equals \(d\). For convenience, we adopt the following convention. 

\begin{definition}
    Fix an odd integer \(d\).  
    A finite multi-graph is \textbf{Type I} if its maximum degree is strictly less than \(d\), and \textbf{Type II} if its maximum degree equals \(d\) and is \emph{bridge-less}. 
\end{definition}

We call a bridge-block in \(G\) Type I (resp. Type II) if it is Type I (resp. Type II) as an induced subgraph of \(G\), see Figure \ref{fig:block}.

\begin{figure}[h!]

\centering
\includegraphics[]{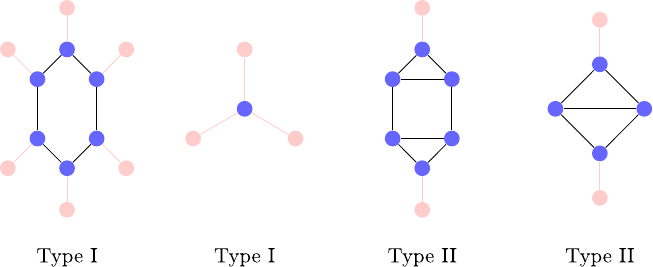}
\caption{Examples of bridge-blocks in a cubic graph of both types. Vertices in the blocks are colored blue whereas the bridges are colored pink.}
\label{fig:block}
\end{figure}

\subsection{Overview of the proof}
\label{sec:proof_overview}

To prove that the operator $\mathcal{H}^\mathrm{ab}$ has no eigenvalues, it suffices by Theorem \ref{thm:criteria} to show that for any $\lambda\in\mathbb{R}$, we can find a degree-2 subgraph $\gamma$ in $G$ such that $\lambda$ is not a root of the matching polynomial of $G\backslash\gamma$.

If $G$ has at most one bridge, it is known (Lemma~\ref{Lemma: keylemma}) that it has a 2-factor. Removing it leaves the empty graph, and Theorem~\ref{thm:regular} is immediate. Thus the most challenging case is where \(G\) contains at least one bridge-block that is not a leaf in the bridge-block tree.

\smallskip

\noindent\textbf{Step 0. Degree-\(2\) subgraphs in Type II graphs.}

The fundamental step in the proof is a mechanism for finding certain degree-2 subgraphs in Type II blocks. Precisely, let $B$ be a Type II block in $G$ and distinguish a vertex $v$ of degree smaller than $d$ in the induced graph on $B$. In Proposition \ref{prop: smoothprocedure} we show that there are two degree-2 subgraphs of $B$, one containing $v$ and one not, but both containing all vertices of degree $d$ in $B$. The proof uses the fact that odd-regular graphs without too many bridges contain a $2$-factor (Lemma \ref{Lemma: keylemma}) as well as a novel degree-correction tool that minimally alters the graph geometry. 

Applying Proposition \ref{prop: smoothprocedure} to leaf-blocks, our criterion immediately implies that for $\lambda$ to be an eigenvalue of $\mathcal{H}^\mathrm{ab}$, it must be a root of the matching polynomial of every graph obtained by replacing each leaf block in $G$ by a singleton or empty graph, followed by deleting any degree-2 subgraph. 

\smallskip

\noindent\textbf{Step 1. The star graph case and the induction step.}

From now on, we consider $G$ to have all leaf-blocks replaced by singletons. The first interesting case arises when the bridge-block tree is a star graph. If the central block is Type I, meaning that each vertex in this block has an attaching leaf in \(G\), we apply the recursion relation of matching polynomials to show that by simply deleting some leaves, \(\lambda\) is not a matching polynomial root of the remaining graph, see Lemma \ref{lemma:randomleafavoideigenvalue}. 

If however the central block \(B\) is Type II, then by Step 0, there exists some degree-\(2\) subgraph that covers all degree-\(d\) vertices. Upon the deletion of this degree-\(2\) subgraph, we obtain a Type I graph whose vertices are attached to leaves and we argue exactly as above and Theorem \ref{thm:regular} holds for this special case.

The general case, which we explain for the rest of this overview, requires a more complicated argument through an induction on the number of non-leaf blocks in $G$. That is, an induction on the following statement: If $G$ has $k$ non-leaf bridge blocks, then for any $\lambda\in\mathbb{R}$, we can delete a degree-2 subgraph and a collection of leaves from $G$ so that $\lambda$ is not a root of the generalized matching polynomial of the remaining graph. 

By a simple argument we can find a block $B$  for which all but one of its incident bridge blocks are leaves. Let $B'$ denote the induced subgraph of $G$ on $B$ and its attached leaves, and let $G'$ be the graph obtained from $G$ by replacing $B'$ by a leaf $\ell$. Since $G'$ has one less non-leaf block than $G$, the inductive hypothesis yields a degree-\(2\) subgraph $C$ and a collection of leaves $L$ in $G'$ for which $\lambda$ is not a root of the generalized matching polynomial of $G'\setminus(C\sqcup L)$.

\smallskip

\noindent\textbf{Step 2. Reintroduction of $B'$.}

We now wish to reintroduce $B'$. The way this is done depends upon if the leaf $\ell$ replacing $B'$ in $G'$ is in $L$ or not. Let $v$ be the vertex in $B$ incident to the non-leaf block in $G$. We assume that $B$ is a Type II block (Type I blocks are treated in an easier manner similar to Step 4 below) and $v$ is the distinguished vertex of $B$ as in Step 0.

\begin{enumerate}
	\item If $\ell\in L$, delete from $G$ the degree-2 subgraph of $B$ containing $v$ from Step 0. This disconnects what remains of $B'$ from $G$.
	
	\item If $\ell\notin L$, delete instead the degree-2 subgraph of $B$ not containing $v$.
\end{enumerate}

\smallskip

\noindent\textbf{Step 3. Destroy matching polynomial roots in components not containing $v$.}

After removing the degree-2 subgraph from Step 2 together with $C$ and $L\setminus\{\ell\}$ from Step 1, every connected component of $G$ not containing $v$ has one of the following types. 

\begin{enumerate}
	\item Contained in $G'$. By Step 1, deleting $C$ and $L\setminus\{\ell\}$ ensures that $\lambda$ is not a root of its generalized matching polynomial.
	
	\item Contained in $B'$. In this case every vertex in the component is either a leaf or adjacent to a leaf. By using the recursion relation of matching polynomials, we can find through Lemma~\ref{lemma:randomleafavoideigenvalue} a collection of vertices to delete in the connected component that are leaves in $G$ so that $\lambda$ is not a root of the generalized matching polynomial of the remaining graph.
\end{enumerate}

\noindent\textbf{Step 4. Destroy matching polynomial roots in the component containing $v$.}

The component containing $v$, which exists only if $\ell\notin L$, is a graph obtained by connecting two disjoint subgraphs $G_{\mathrm{co}}$ and $G_{\mathrm{at}}$ of $G$ by a bridge. Here $G_\mathrm{co}$ is the component of $G'$ that contained $\ell$ after deleting $L$ and $C$ but now with the leaf \(\ell\) removed, and $G_{\mathrm{at}}$ is the component in $B'$ containing $v$ after removal of the degree-2 subgraph in Step 2. For simplicity, assume that $G_\mathrm{co}$ is not the empty graph and $G_{\mathrm{at}}$ is not a singleton.

Every vertex except \(v\) in \(G_{\mathrm{at}}\) is either a leaf or adjacent to a leaf. As a consequence, by using the recursion relation of matching polynomials, we show the following (Lemma \ref{lemma:lemmaforcontradiction}): If $\lambda$ is a root of the generalized matching polynomial of any graph obtained from connecting $G_{\mathrm{co}}$ and $G_{\mathrm{at}}$ by reintroducing the bridge connecting them, and removing a collection of leaves from $G_{\mathrm{at}}$, then it is also a root of $G_{\mathrm{co}}$ with \(\ell\) reattached. But this contradicts the induction hypothesis unless there is a collection of leaves that can be deleted from the component such that $\lambda$ is not a root of the generalized matching polynomial.

The result follows since we have found a degree-2 subgraph and a collection of leaves (which correspond to a choice of degree-2 subgraph in leaf blocks) in $G$ whose deletion yields a graph for which the generalized matching polynomial of every connected component does not have $\lambda$ as a root.

\subsection{Properties of Type II graphs}\label{Section:TypeII}

We begin by proving the result about finding degree-$2$ subgraphs in Type II blocks that is used to treat leaf-blocks as leaves in Step 0 of the proof outline, and used to reduce Type II blocks to Type I graphs except possibly at a distinguished vertex. A key tool for this is the following result about finding 2-factors in regular multi-graphs \cite[Theorem 2.2]{kostochka2021cut}, see also \cite[Corollary 1]{Han98}.

\begin{lemma}\label{Lemma: keylemma}
    If $G$ is a $d$-regular multi-graph with at most $d-1$ bridges, then $G$ has a $2$-factor. 
\end{lemma}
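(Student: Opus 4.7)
The approach I would take is to split by the parity of $d$ and reduce the odd case to the even one via a perfect matching extraction. If $d$ is even, the result follows from Petersen's classical theorem that every $2k$-regular multi-graph decomposes into $k$ edge-disjoint $2$-factors; the standard proof takes an Eulerian circuit in each component, splits each vertex into an in-copy and out-copy to obtain a bipartite $k$-regular multi-graph, finds a perfect matching there by K\"onig's theorem, and lifts it back to a $2$-factor of $G$. In particular the even case does not even need the bridge hypothesis.

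For $d$ odd, the plan is to produce a perfect matching $M$ of $G$ and then apply the even case to $G\setminus M$, which is $(d-1)$-regular. To show $G$ has a perfect matching I would invoke Tutte's $1$-factor theorem: it is enough to check that $o(G-S)\le |S|$ for every $S\subseteq V$, where $o$ counts odd components. The key parity observation is that for each odd component $C$ of $G-S$, the number of edges from $C$ to $S$ equals $d|C|-2e(C)$, which is odd since $d$ and $|C|$ are both odd; in particular it is at least $1$. Odd components joined to $S$ by exactly one edge correspond to bridges of $G$ (the single edge disconnects $C$ from the rest of the graph), so by hypothesis there are at most $d-1$ such components. The remaining odd components send at least $3$ edges into $S$, and double counting the $S$-to-$(G-S)$ edges against the total degree sum $d|S|$ produces a linear bound on $o(G-S)$ in terms of $|S|$ and the number of bridges.

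The main obstacle is converting the double-counting inequality into the exact Tutte condition $o(G-S)\le |S|$, which requires a careful accounting of the singly-attached components against the $d-1$ bridges allowed, together with separate treatment of the small cases $|S|=0,1$ using the fact that a $d$-regular multi-graph with $d$ odd has even order. This balancing is precisely what forces the bound $d-1$ to be sharp: with $d$ or more bridges one can construct counterexamples where Tutte's condition fails on a carefully chosen $S$. The full combinatorial verification is exactly what is carried out in the cited works \cite{Han98,kostochka2021cut}, so I would invoke their result rather than reproduce the detailed case analysis.
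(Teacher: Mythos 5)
The paper does not prove this lemma at all: it is quoted directly from \cite[Theorem 2.2]{kostochka2021cut} and \cite[Corollary 1]{Han98}, so your closing move of deferring to those references matches what the authors do. Your even case (Petersen via an Eulerian orientation and K\"onig) is also correct and standard. The problem is the mechanism you sketch for odd $d$: extracting a perfect matching $M$ and applying Petersen to the $(d-1)$-regular graph $G\setminus M$ cannot work, because a $d$-regular multigraph with at most $d-1$ bridges need not have a perfect matching once $d\geq 5$. The obstacle you flag (``converting the double-counting inequality into the exact Tutte condition'') is not a bookkeeping issue --- the Tutte condition is genuinely false under these hypotheses.

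Concretely, for $d=5$ take a vertex $v$ joined by single edges to $a_0$ and to $b_0$, and by three parallel edges to $c_0$; let $\{a_0,a_1,a_2\}$ carry edges with multiplicities $a_0a_1=a_0a_2=2$ and $a_1a_2=3$ (likewise for $\{b_0,b_1,b_2\}$), and let $\{c_0,c_1,c_2\}$ carry $c_0c_1=c_0c_2=1$ and $c_1c_2=4$. This multigraph is $5$-regular with exactly two bridges ($va_0$ and $vb_0$), so it satisfies the hypothesis with room to spare; yet $G-v$ has three odd components, Tutte's condition fails at $S=\{v\}$, and there is no perfect matching. (It does have a $2$-factor --- the digon on two of the $vc_0$ edges, the digon on $\{c_1,c_2\}$, and the triangles on the $a$'s and $b$'s --- so the lemma itself is not contradicted.) The singly-attached odd components can be charged to bridges, but a component attached by three edges, as $C$ is here, defeats the count, and no refinement of the accounting can rescue a statement that is false. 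The actual proofs in the cited works must therefore argue about $2$-factors directly (via Tutte's $f$-factor theorem, the Belck--Gallai criterion for regular factors, or an equivalent gadget reduction to a matching in an auxiliary graph) rather than factoring through a $1$-factor of $G$; if you intend to reproduce the argument instead of citing it, that is the route you need.
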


Note that if $d$ is even, then it always has a $2$-factor and is bridge-less. We assume that $d$ is odd from now on. To make use of Lemma \ref{Lemma: keylemma} we will often need to correct the degree of certain vertices in blocks of the graph. One way to do this is by attaching graphs that have a single vertex of degree $d-1$ and all other vertices (if they exist) of degree $d$, by bridges. The simplest example is a bouquet with \((d-1)/2\) self-loops. 
This procedure leads to the following result.

\begin{lemma}\label{lemma:2factor}
Suppose that $G=(V,E)$ is a finite, connected bridge-less multi-graph with maximum degree at most $d$, with $d\geq 3$ odd. 
If $$\sum_{v\in V} (d-\mathrm{deg}_G(v))\leq d-1,$$
then $G$ has a \(2\)-factor.
\end{lemma}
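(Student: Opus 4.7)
The plan is to reduce the statement to Lemma~\ref{Lemma: keylemma} by embedding $G$ into a $d$-regular multi-graph $\tilde{G}$ with few bridges, and then to extract a $2$-factor of $G$ from a $2$-factor of $\tilde{G}$. To build $\tilde{G}$, for each $v \in V$ I set $k_v = d - \deg_G(v)$ and attach to $v$ exactly $k_v$ \emph{bouquet gadgets}, where a bouquet gadget is a single new vertex carrying $(d-1)/2$ self-loops joined to $v$ by a single new edge. Since $d$ is odd, the number $(d-1)/2$ is an integer, so the construction is well-defined; when $v$ already has degree $d$ we attach nothing.

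Next, I verify that $\tilde{G}$ is $d$-regular with at most $d-1$ bridges. Each $v \in V$ acquires $k_v$ new incident edges, so its degree in $\tilde{G}$ is $\deg_G(v)+k_v=d$, while each bouquet vertex has degree $(d-1)+1=d$. Because $G$ is bridge-less and the bouquets contain only self-loops (which are never bridges), the bridges of $\tilde{G}$ are precisely the attaching edges, totalling $\sum_{v\in V} k_v \le d-1$. Lemma~\ref{Lemma: keylemma} then furnishes a $2$-factor $F$ of $\tilde{G}$.

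The final step is to show that $F$ contains none of the attaching bridges, so that $F\cap E(G)$ restricts to a $2$-factor of $G$. At any bouquet vertex $x$, the $2$-factor condition reads
\[
2\,\#\{\text{self-loops at } x \text{ in } F\} + \mathbf{1}\{\text{attaching edge at } x \in F\} = 2,
\]
and since the second term is $0$ or $1$, parity forces it to be $0$ and the first count to be $1$. Hence no attaching bridge lies in $F$, so every $v \in V$ has degree $2$ in $F \cap E(G)$, which is therefore a $2$-factor of $G$. The main obstacle is designing the gadget so that its $2$-factor behavior forces the attaching bridge out of $F$; the bouquet with $(d-1)/2$ self-loops works because its self-loop contribution is a multiple of $2$, so the degree-$2$ target cannot accommodate the single attaching edge's odd contribution. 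This is exactly where the hypothesis that $d$ is odd enters crucially.
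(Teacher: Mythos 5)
Your proposal is correct and follows essentially the same route as the paper: attach $d-\deg_G(v)$ bouquet gadgets (a new vertex with $(d-1)/2$ self-loops joined by a single edge) to each deficient vertex, invoke Lemma~\ref{Lemma: keylemma} on the resulting $d$-regular graph with at most $d-1$ bridges, and observe that the $2$-factor avoids the attaching edges. The only cosmetic difference is in that last step: the paper notes that a $2$-factor is a union of cycles and cycles cannot cross bridges, whereas you run a local parity count at each bouquet vertex; both are valid.
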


\begin{proof}
We begin by joining copies of bouquets with $(d-1)/2$ self-loops to each vertex of degree less than \(d\) via a bridge edge to correct the degree deficit. The number of copies added at a vertex \(v\) is precisely $d-\mathrm{deg}(v)$ and so since $$\sum_{v\in V} (d-\mathrm{deg}_G(v))\leq d-1,$$ we obtain a $d$-regular graph with at most $d-1$ bridges. By Lemma \ref{Lemma: keylemma} this means the new graph has a 2 factor. Since cycles cannot cross bridges, it must be that the $2$-factor restricts to one on the original graph $G$.
\end{proof}

    If a Type II graph has only one vertex $o$ of degree less than $d$, we can use the previous lemma to show that there exists two degree-\(2\) subgraphs in the graph, with one containing all vertices and the other containing all vertices except for $o$, see Figure \ref{fig:twodegree2subgraphinhouse5}. Using this fact as a starting point, we show by induction that a similar phenomenon actually occurs in all Type II graphs, and allows us to construct degree-2 subgraphs whereby we can choose whether or not they contain a given distinguished vertex of degree less than $d$.

\begin{figure}[h!]
    \centering
\includegraphics[]{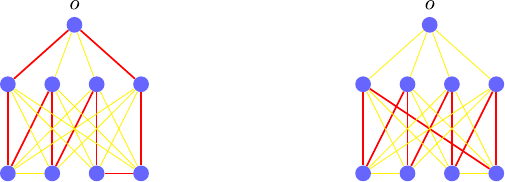}
    \caption{Two degree-$2$ subgraphs of a Type II graph such that the left contains $o$ and right does not, edges of these subgraphs are in red.}
    \label{fig:twodegree2subgraphinhouse5}
\end{figure}

\begin{proposition}
    \label{prop: smoothprocedure}

Let $G=(V,E)$ be a finite, connected, bridge-less multi-graph with maximum degree $d$ with $d\geq 3$ odd. Let 
$$I=I(G):=\{v \in V: \mathrm{deg}_G(v)<d \}.$$ 
For any $v\in I$, there exists two degree-$2$ subgraphs in $G$ one of which has vertex set containing $v$ and one that does not. In both cases, the vertex sets contain at least $G\setminus I$.
\end{proposition}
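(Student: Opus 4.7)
The overall strategy is to reduce to Lemma~\ref{lemma:2factor} via a degree-correcting modification of $G$. Given the distinguished vertex $v \in I$, the plan is to construct two auxiliary graphs $\tilde{G}_1$ (for the subgraph $F_1$ containing $v$) and $\tilde{G}_2$ (for the subgraph $F_2$ avoiding $v$), each $d$-regular (or near-$d$-regular) and bridge-less (or with few bridges), so that a $2$-factor of $\tilde{G}_i$---guaranteed by Lemma~\ref{lemma:2factor} or Lemma~\ref{Lemma: keylemma}---restricts to the required degree-$2$ subgraph of $G$. I would carry this out by induction on $|I|$.

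For the base case $|I|=1$, $F_1$ is immediate: the total deficit is $d - \deg_G(v) \leq d-1$, so $G$ itself admits a $2$-factor by Lemma~\ref{lemma:2factor}, which automatically contains $v$. For $F_2$, I would modify $G$ by replacing $v$: delete $v$, and for each edge $e$ incident to $v$ (counted with multiplicity) attach a new bouquet vertex $b_e$ to the other endpoint of $e$ by a new edge, equipping $b_e$ with $(d-1)/2$ self-loops. The resulting graph $\tilde{G}_2$ is $d$-regular, and each attachment edge is a bridge because $b_e$'s remaining incident edges are all self-loops. Since a $2$-factor cannot contain a bridge, it must use a single self-loop at each $b_e$, forcing the attachment edge out of the $2$-factor; restricting to $G$ then yields a degree-$2$ subgraph covering $V \setminus \{v\}$ and avoiding $v$.

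For the inductive step $|I| \geq 2$, the plan is to shrink $|I|$ while keeping $v \in I$. For each $w \in I \setminus \{v\}$ with $\deg_G(w)$ odd, attach $(d - \deg_G(w))/2$ self-loops at $w$; this corrects the deficit without creating bridges. For $w$ of even degree, I would pair $w$ with another even-degree $w' \in I \setminus \{v\}$ (when available) and add an edge $ww'$ together with appropriate self-loops; since $ww'$ closes a cycle with an existing $w$-to-$w'$ path in the connected graph $G$, it is not a bridge. The resulting graph $\tilde{G}$ is still bridge-less with $|I(\tilde{G})| < |I(G)|$ and $v \in I(\tilde{G})$, so the inductive hypothesis yields subgraphs $\tilde{F}_1, \tilde{F}_2$. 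Restricting to $G$ (i.e., deleting the added self-loops and edges) produces the desired $F_1, F_2$: the restriction remains a valid degree-$2$ subgraph because at each modified vertex $w$, a $2$-factor of $\tilde{G}$ cannot combine one original edge with one self-loop (which would give local degree $3$), so deletion of added edges changes the local degree only from $2$ to $0$, never to $1$. Consequently $F_i$ still covers $V \setminus I$ and has the correct $v$-status.

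Two main obstacles are anticipated. A parity issue arises when the number of even-degree vertices in $I \setminus \{v\}$ is odd---permitted by the handshaking lemma when $|V|$ is odd---so that one leftover vertex cannot be paired by the procedure above. Handling this exceptional vertex will require a more ingenious gadget that still respects the $d-1$ bridge bound of Lemma~\ref{Lemma: keylemma}. The second obstacle lies in the base case for $F_2$: although the $\deg_G(v) \leq d-1$ new attachment bridges are within tolerance, $G \setminus \{v\}$ itself can contribute further bridges if $v$ is (or nearly is) a cut-vertex, threatening the bridge bound. Controlling this likely requires attaching a single highly-connected $d$-regular block with $\deg_G(v)$ external ports in place of disconnected bouquets, so that multi-edge redundancy absorbs the extra bridges. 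This structural refinement is presumably the ``novel degree-correction tool'' referenced in the proof overview, and is where the bulk of the combinatorial work will lie.
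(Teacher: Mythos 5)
Your high-level plan (degree-correct, invoke Lemma~\ref{lemma:2factor}/Lemma~\ref{Lemma: keylemma}, induct on $|I|$) matches the paper's, and you correctly identify the two hard points, but neither is actually resolved, so the proof has genuine gaps. For the base case of the subgraph avoiding $v$: attaching bouquets at the former neighbours of $v$ does nothing about the bridges \emph{internal} to $G\setminus\{v\}$, which can be arbitrarily numerous when $v$ is close to being a cut vertex, so the hypothesis of Lemma~\ref{Lemma: keylemma} fails and your proposed ``single highly-connected block with external ports'' is not worked out. The paper's resolution is different and essential: decompose $G\setminus\{v\}$ into its bridge-blocks, observe that every leaf of the resulting bridge-block tree must be adjacent to $v$ in $G$ (else $G$ would have a bridge), deduce that a block of tree-degree $m$ joined to $v$ by $a$ edges satisfies $m+a\le d-1$, and apply Lemma~\ref{lemma:2factor} \emph{to each block separately}; the blockwise $2$-factors then concatenate.

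In the inductive step there are two problems. The parity obstruction you flag (an odd number of even-degree vertices in $I\setminus\{v\}$) is left unhandled. More seriously, the pull-back argument is wrong even when pairing is possible: your claim that deleting the added edges changes local degrees ``only from $2$ to $0$, never to $1$'' is valid for self-loops (which contribute $2$ to the degree) but false for the added non-loop edge $ww'$ --- a cycle of the degree-$2$ subgraph of $\tilde G$ may traverse $ww'$ together with one original edge at each endpoint, and deleting $ww'$ then leaves $w$ and $w'$ with degree $1$. The paper sidesteps both issues by correcting one vertex $v_1$ at a time with a local gadget (for even $\deg(v_1)$: reroute one edge $v_1u$ through a new vertex $v_1'$ joined to $v_1$ by $d+1-\deg(v_1)$ parallel edges and carrying self-loops), and then performing an explicit case analysis on how the degree-$2$ subgraph of $G'$ meets the gadget --- each case either contracts back to a cycle through $v_1u$, or drops $v_1$ entirely (which is permitted since $v_1\in I$). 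Without an analogue of that case analysis, your restriction step does not go through.
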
 

\begin{proof}
    We proceed by induction on $|I|$. For the case $|I|=1$, let $v \in I$. The existence of the first degree-$2$ subgraph that contains $v$ then follows directly from Lemma \ref{lemma:2factor}.

    For the second collection, delete $v$ (and all edges incident to it). Even though $G$ is bridge-less, the deletion of $v$ may cause bridges to appear. 
    
     Recall that if $v_0$ is a vertex in a forest such that $\mathrm{deg}(v_0)=m$, then there exists at least $m$ distinct leaves in the forest (and also distinct from $v_0$, in case it is itself a leaf). Now consider the bridge-block tree of $G$ with $v$ removed. Given a block $B$ that is of degree $m$ in the bridge-block tree, we can find at least $m$ different blocks that are leaves in the bridge-block tree (that are also distinct from $B$) by the previous observation. However, if a block is a leaf in the bridge-block tree, then there has to be an edge between it and the vertex $v$ in the original graph otherwise $G$ would have had a bridge. 
    
    Suppose then that in $G$, $B$ is connected to $v$ by $a$ edges, then we have $$m+a\leq d-1.$$ Indeed, since each of the blocks distinct from $B$ (for which there are at least $m$) is attached to $v$ by at least one edge in the original graph, and $B$ itself is connected to $v$ by exactly $a$ edges, we have that $m+a$ is bounded above by the degree of $v$ in $G$ which is at most $d-1$.
    
    Thus, each block in the bridge-block tree obtained after deleting $v$ satisfies the condition of Lemma \ref{lemma:2factor} since 
    \[\sum_{v\in V_B}\left(d-\mathrm{deg}_B(v)\right)=m+a\leq d-1,\]
    and so we can find a degree-$2$ subgraph containing all of the vertices in a given block. Combining the degree-\(2\) subgraphs for each block gives a degree-$2$ subgraph in $G$ containing all vertices except $v$ as required.

Now suppose that the lemma holds for $|I(G)|=k$ for some $k\geq 1$. For the case where $|I(G)|=k+1$, distinguish the vertex $v$ and pick $v_1\in I, v_1\neq v$. We do the following degree-correction procedure to create a new graph $G'$ such that $I(G')=k$ (see Figure \ref{fig:oddsmoothing}):
\begin{itemize}
    \item If $\mathrm{deg}(v_1)$ is odd, we attach to it $\frac{d-\mathrm{deg}(v_1)}{2}$ self-loops.
    \item If $\mathrm{deg}(v_1)$ is even, let $u\in G$ be one of its neighbors that is not \(v_1\) itself. Now, remove the edge joining $v_1$ and $u$ (if there are multiple such edges, we just remove one of them). Next, we introduce a new vertex $v_1'$ and add $d+1-\mathrm{deg}(v_1)$ edges between $v_1$ and $v_1'$. Then, we put an edge between $v_1'$ and $u$, and attach $\frac{\mathrm{deg}(v_1)-2}{2}$ self-loops to $v_1'$ .
\end{itemize}

\begin{figure}[h!]
    \centering
   \includegraphics[]{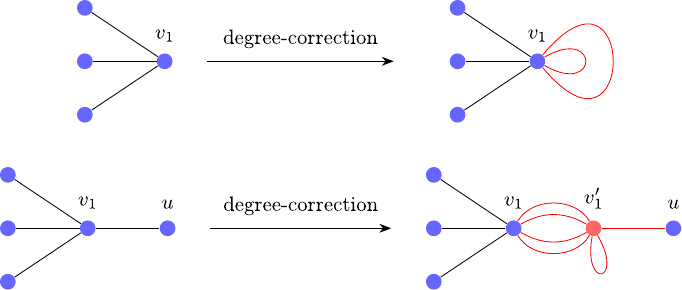}
\caption{The degree-correction procedure of degree $d=7$. Top:  $\mathrm{deg}(v_1)=3$, bottom: $\mathrm{deg}(v_1)=4$.}
    \label{fig:oddsmoothing}
\end{figure}
In the resulting graph $G'$, the degree of $v_1$ and the new vertex $v_1'$ is $d$ whereas the other vertices have their degrees unchanged and so $|I(G')|=k$, and it is straightforward to check that $G'$ is also bridge-less. By induction, we can find two degree-$2$ subgraphs in $G'$, one that contains the vertex $v$ and one that does not, but in both cases, they contain the vertex set $V(G')\setminus I(G')$. Let us construct from each a degree-$2$ subgraph in the original graph $G$ that satisfies the conclusion of the proposition.

\smallskip

\textbf{Case 1}: Suppose that $\mathrm{deg}(v_1)$ is odd, then if the degree-$2$ subgraph does not contain one of the newly added self-loops, it is also a degree-$2$ subgraph in $G$ and we are done. If however it does contain a newly added self-loop, then after we delete the self-loop, what is left is a degree-$2$ subgraph in $G$. The only vertex that is not contained in its vertex set compared to the previous subgraph in $G'$ is possibly $v_1$. This is permitted since $v_1 \in I$.

\smallskip

\textbf{Case 2}: Suppose that $\mathrm{deg}(v_1)$ is even, we have three sub-cases depending on how $v_1'$ is contained in the degree-$2$ subgraph:
\begin{itemize}
    \item The degree-$2$ subgraph in $G'$ contains a newly added self-loop at $v_1'$. Then we know that without this self-loop, the remainder of the subgraph does not cross the edges joining $v_1$ to $v_1'$ or $u$ to $v_1'$. Thus by deleting the self-loop from the subgraph, what remains is a degree-$2$ subgraph in $G$. In this case, the subgraph collection still contains the vertex $v_1$.
    
    \item The degree-$2$ subgraph in $G'$ contains a cycle that traverses two distinct edges between $v_1$ and $v_1'$. By deleting this cycle, what remains is a degree-$2$ subgraph in $G$ since no other cycle could cross edges emanating from $v_1'$. Note that this time $v_1$ is not in the vertex set of the subgraph of $G$.
    
    \item The degree-$2$ subgraph crosses only one new edge between $v_1$ and $v_1'$ and hence also the edge joining $v_1'$ to $u$. By contracting $v_1$ and $v_1'$ back to one vertex, we get a degree-$2$ subgraph in $G$ containing a cycle that traverses the edge between $v_1$ and $u$. Note that in this case, $v_1$ is in the degree-\(2\) subgraph of $G$.    
\end{itemize}
In any case, we have found the two desired degree-$2$ subgraphs.
\end{proof}

\subsection{Properties of Type I graphs with attached leaves}\label{section:Type I}

The previous section essentially allows us to reduce Type II blocks to Type I graphs, potentially with an additional distinguished vertex. In the final proof, we will reduce to the case where every bridge attached to a Type I subgraph under consideration connects to a leaf. The next result shows that for such graphs and $\lambda\in\mathbb{R}$, we can always remove a subset of these leaves to ensure that $\lambda$ is not a root of the generalized matching polynomial of the remaining graph. 

\begin{lemma}\label{lemma:randomleafavoideigenvalue}
    Let $G=(V,E)$ be a finite, connected multi-graph with $|V| \geq 2$, a distinguished root vertex $o$, and leaf set $L$.  
    Suppose that $G$ satisfies the following property: for every $v \in V \setminus \{o\}$, either $v \in L$ or there exists $l_v \in L \setminus \{o\}$ adjacent to $v$.  
    Then, for any $\lambda \in \mathbb{R}$, there exists $L_0 \subset L \setminus \{o\}$ such that $\lambda$ is not a root of the generalized matching polynomial $m_{G \setminus L_0}^{\mathcal{H}}(x)$.
\end{lemma}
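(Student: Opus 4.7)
The plan is to argue by contradiction: assume that $\lambda$ is a root of $m_{G\setminus L_0}^{\mathcal{H}}$ for every $L_0\subseteq L\setminus\{o\}$, and reach a contradiction by iterating the recursion of Lemma~\ref{lem:recursion}. Write $L^\ast:=L\setminus\{o\}$ and let $V^\ast$ denote the set of non-leaf, non-root vertices of $G$. The hypothesis lets us fix, for each $v\in V^\ast$, a leaf $l_v\in L^\ast$ adjacent to $v$; the assignment $v\mapsto l_v$ is injective since each leaf has a unique neighbor.

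The first step is to prove, by finite induction on $|I|$, that writing $S_I:=\{l_v,v: v\in I\}$,
\[
m_{G\setminus S_I\setminus L_0}^{\mathcal{H}}(\lambda)=0 \quad\text{for every } I\subseteq V^\ast \text{ and every } L_0\subseteq L^\ast\setminus\{l_v:v\in I\}.
\]
The base case $I=\emptyset$ is the contradictory assumption. For the step $I=I'\cup\{v_0\}$, the vertex $l_{v_0}$ is still a leaf in $G\setminus S_{I'}\setminus L_0$ because its unique neighbor $v_0$ lies neither in $L^\ast$ nor in $S_{I'}$, so $v_0$ has not been removed. Applying Lemma~\ref{lem:recursion} at $l_{v_0}$ and invoking the inductive hypothesis on both $L_0$ and $L_0\cup\{l_{v_0}\}$ then yields $m_{G\setminus S_I\setminus L_0}^{\mathcal{H}}(\lambda)=0$.

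Second, taking $I=V^\ast$ and $L_0=L^\ast\setminus\{l_v:v\in V^\ast\}$ in the claim just proved deletes every vertex of $G$ except $o$, so the identity $m_{\{o\}}^{\mathcal{H}}(x)=x-\mathcal{V}_o$ forces $\lambda=\mathcal{V}_o$. Finally, since $G$ is connected with $|V|\geq 2$, the root $o$ has at least one neighbor $w\in V\setminus\{o\}$. If $w\in L^\ast$, keep $I=V^\ast$ and take $L_0=(L^\ast\setminus\{l_v:v\in V^\ast\})\setminus\{w\}$; if $w\in V^\ast$, take $I=V^\ast\setminus\{w\}$ and $L_0=L^\ast\setminus\{l_v:v\neq w\}$. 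In either case the combined removal leaves the two-vertex subgraph on $\{o,w\}$ carrying all $o$--$w$ edges of $G$, whose matching polynomial at $x=\mathcal{V}_o$ equals $-\sum_{e:o\sim w}|w_e|^2\neq 0$. This contradicts the first step and completes the proof.

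The main obstacle is the bookkeeping in the first step: one must verify that $l_{v_0}$ remains a leaf in the reduced graph $G\setminus S_{I'}\setminus L_0$ so that the recursion is applicable at it. This relies on two structural facts---that $L_0$ only removes leaves (and so never touches the non-leaf vertex $v_0$), and that $S_{I'}$ only removes vertices indexed by $I'$, so $v_0\notin S_{I'}$. The remainder is just careful algebraic manipulation of the recursion identity and the observation that, no matter how $o$ is connected to the rest of $G$, the deletion procedure can be tuned to leave behind a single pendant neighbor of $o$.
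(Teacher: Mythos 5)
Your proof is correct. The engine is the same as in the paper's argument --- apply the leaf recursion of Lemma~\ref{lem:recursion} at a pendant vertex, observe that two of the three terms vanish by the contradiction hypothesis, and conclude that the third must vanish too --- but the way you organize the induction is genuinely different and, I think, cleaner. The paper inducts on $|V\setminus(L\cup\{o\})|$, and in the inductive step it must decompose $G\setminus\{u_1,l_1,\dots,l_n\}$ into connected components, choose a new root for each component, verify that each component again satisfies the hypotheses of the lemma, and only then invoke the inductive hypothesis componentwise to build a witnessing $L_0$; this is where most of the bookkeeping in the published proof lives. Your version replaces all of that with a single monotone propagation: the assumed universal vanishing of $m_{G\setminus L_0}^{\mathcal H}(\lambda)$ is pushed down through every graph $G\setminus S_I\setminus L_0$ obtained by peeling off injectively chosen (leaf, neighbor) pairs, with no need to track connectivity or re-root, until the surviving graph is $\{o\}$ (forcing $\lambda=\mathcal V_o$) and then $\{o,w\}$ (forcing $-\sum_{e}|w_e|^2=0$, absurd since all weights are nonzero). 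The key facts you rely on --- injectivity of $v\mapsto l_v$ because a leaf has a unique neighbor, and the persistence of $l_{v_0}$ as a leaf of $G\setminus S_{I'}\setminus L_0$ because neither $S_{I'}$ nor $L_0$ can contain the non-leaf vertex $v_0$ --- are both verified correctly, and the endgame correctly handles both the case where $o$'s neighbor is a leaf and the case where it is an internal vertex (possibly with multi-edges). What the paper's route buys is a reusable componentwise statement that is then exploited almost verbatim in the proof of Lemma~\ref{lemma:lemmaforcontradiction}; what yours buys is brevity and the elimination of the component/root bookkeeping.
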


\begin{proof}
First note that since self-loops do not contribute to a graphs matching polynomial and we will only be deleting leaves, we may assume that $G$ has no self-loops. Our proof is based on induction on $|V\setminus (L\cup \{o\})|$.

Suppose that $|V\setminus (L\cup \{o\})|=0$ so that since $|V|\geq 2$, the graph consists of the root $o$ with at least one leaf emanating from it. If we delete all but one of the leaves, say \(l\), attached to $o$, what remains is a graph with two vertices and one edge connecting them. Suppose that the edge weight is $w$ or $\bar{w}$ depending on the direction. The generalized polynomial of this graph then equals \(\left(x-\mathcal{V}_o\right)\left(x-\mathcal{V}_l\right)-|w|^2\). If we delete all leaves attached to $o$, then we get a singleton whose only matching polynomial root is $\mathcal{V}_o$. Our claim then trivially holds. 
    
Next consider $|V\setminus (L\cup \{o\})| =1$. Let $u\in V, u\neq o$ be the non-leaf vertex necessarily connected to $o$ since the graph is connected. If we delete all leaves attached to $o$ and $u$, what remains is the induced subgraph on $\{o,u\}$ consisting of $k$ edges joining them. We call this graph $G_0$, and denote the weights on the $2k$ directed edges are $w_1, \overline{w_1}, \cdots, w_k, \overline{w_k}$. If instead we delete all leaves attached to \(o\) and all but one leaf attached to \(u\) in the graph, what remains is the same graph but with a leaf, say $l$, attached to $u$. We denote this graph as $G_0\sqcup \{l\}$ and assume that the weight of the new edge is $w_0$ or $\overline{w_0}$ depending on its orientation.
If the matching polynomial of these two share the same root $\lambda$, then 
we have 
\[
m_{G_0}^{\mathcal{H}}(\lambda)=\left(\lambda-\mathcal{V}_o\right)\left(\lambda-\mathcal{V}_u\right)-\sum_{i=1}^k |w_i|^2=0,
\]
and also
\[
m_{G_0\sqcup \{l\}}^{\mathcal{H}}(\lambda)=\left(\lambda-\mathcal{V}_l\right)\cdot m_{G_0}^{\mathcal{H}}(\lambda)-|w_0|^2 \left(\lambda-\mathcal{V}_o\right)=0.
\]
Here we use the recursion relation of matching polynomials.
This is impossible since substituting the first equation into the second yields that $\lambda=\mathcal{V}_o$, which violates the first equation. So again the lemma holds.

Now assume that the lemma holds for all $|V\setminus (L\cup \{o\})|\leq k$, for some $k\geq 1$; we will show that it also holds for $|V\setminus (L\cup \{o\})|=k+1$ by contradiction. Suppose that $\lambda$ is a root of $m^{\mathcal{H}}_{G\setminus L_0}$ for all $L_0\subset L\setminus\{o\}$. Fix any $u\in V\setminus \left(L\cup\{o\}\right)$ that is connected to $o$ which necessarily exists since the graph is connected. Pick any $u_1\in V \setminus (L\cup\{o,u\}) $ and let $l_1, l_2, \cdots l_n$ be the leaves that are attached to $u_1$.

We now consider each connected component of the graph $G\setminus \{u_1, l_1, l_2, \cdots ,l_n\}$ and show that the inductive hypothesis applies to them. If the component contains $o$, then pick $o$ as the root vertex of that component, otherwise pick one of the former neighbors of $u_1$ to be the root. Then each component $C$ with root $r_C$ satisfies the assumptions of the lemma with $|V_C \setminus (L_C\cup\{r_C\})|\leq k$ and $L_C\setminus\{r_C\}\subseteq L$, see Figure~\ref{fig:randomleafavoideigenvalue}. We recall that $V_C$ and $L_C$ represent the vertex set and leaf set of $C$.

\begin{figure}[htbp]
\centering
\includegraphics[]{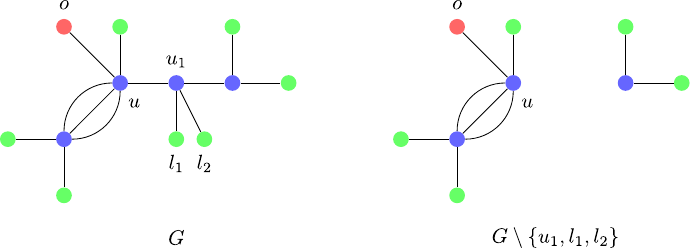}
\caption{Illustration for the proof of Lemma \ref{lemma:randomleafavoideigenvalue} with leaves colored green except for the distinguished vertex $o$.}
   \label{fig:randomleafavoideigenvalue}
   \end{figure}

Indeed, upon deletion of $u_1$ and the leaves $l_1,\ldots, l_n$, the only possible new leaves that have been created in a connected component $C$ that weren't leaves in the original graph are possibly vertices in $C$ that were connected to $u_1$ in the original graph. If such a vertex does become a leaf, then by the assumption in the lemma on the original graph, it must be connected to some leaf in the original graph, and nothing more. This means that the connected component has precisely two vertices joined by a single edge, one of which was a leaf in the original graph and thus the component trivially satisfies the assumption with $|V_C \setminus (L_C\cup\{r_C\})|=0$ and $L_C\setminus\{r_C\}\subseteq L$. 

In the other components, there are no leaves that weren't leaves in the original graph thus $L_C 
\subseteq L$. Moreover, upon the deletion of \(u_1\) we indeed have $|V_C \setminus (L_C \cup\{r_C\})|\leq k$. Additionally, any non-leaf vertex in the component that is not the root was also a non-leaf vertex in the original graph and hence connected to some leaf which remains a leaf in the component $C$ since we only deleted $u_1,l_1,\ldots,l_n$ none of which are leaves that were connected to vertices other than $u_1$. 

We can thus apply the inductive step to each component and find 
$$L'_0\subset\bigcup_C \left(L_C\setminus\{r_C\}\right)\subseteq L\setminus\{o\},$$
such that the matching polynomial of $G\setminus \left(\{u_1, l_1, l_2, \cdots ,l_n\} \sqcup L'_0\right)$ does not have $\lambda$ as a root. However, due to our assumption towards contradiction, we must have 
$$m_{G\setminus \left(\{l_1, l_2, \cdots ,l_n\} \sqcup L'_0\right)}^{\mathcal{H}}(\lambda)=m_{G\setminus \left(\{l_2, \cdots ,l_n\} \sqcup L'_0\right)}^\mathcal{H}(\lambda)=0.$$
The recursion relation of matching polynomials Lemma \ref{lem:recursion} then implies that
\begin{align*}
    |w_{\{u,l_1\}}|^2 \cdot m_{G\setminus \left(\{u_1, l_1, l_2, \cdots ,l_n\} \sqcup L'_0\right)}^\mathcal{H}&(\lambda)\\
    =-m_{G\setminus \left(\{l_2, \cdots ,l_n\} \sqcup L'_0\right)}^\mathcal{H}&(\lambda)+\left(\lambda-\mathcal{V}_{l_1}\right)\cdot m_{G\setminus \left(\{l_1, l_2, \cdots ,l_n\} \sqcup L'_0\right)}^\mathcal{H}(\lambda)=0,
\end{align*}
a contradiction. 
\end{proof}

We now study the matching polynomial of graphs obtained by attaching a graph from Lemma~\ref{lemma:randomleafavoideigenvalue} to another graph via a bridge from the root. This appeared in the proof outline in Step 4.

To this end, let $G_{\mathrm{co}}=(V_{\mathrm{co}},E_{\mathrm{co}})$ be any finite connected multi-graph with a distinguished vertex $o$ and Schr\"{o}dinger operator $\mathcal{H}_{\mathrm{co}}$. Suppose that $G_{\mathrm{at}}=(V_{\mathrm{at}},E_{\mathrm{at}}), |V_\mathrm{at}|\geq 2$ satisfies the assumptions of Lemma \ref{lemma:randomleafavoideigenvalue} equipped with a Schr\"{o}dinger operator $\mathcal{H}_{\mathrm{at}}$. That is, $G_{\mathrm{at}}$ is a finite connected multi-graph with a distinguished vertex $o_{\mathrm{at}}$ and leaf set $L$, such that for any $v\in V_\mathrm{at}\setminus\{o_{\mathrm{at}}\}$, either $v\in L$ or there exists $l_v\in L\setminus\{o_{\mathrm{at}}\}$ adjacent to $v$. 

By \textbf{attaching $G_\mathrm{at}$ to  $G_\mathrm{co}$}, we mean a graph $G_{\mathrm{jo}}=(V_{\mathrm{jo}}, E_{\mathrm{jo}})$ obtained by joining $o$ and $o_{\mathrm{at}}$ by a (bridge) edge. More precisely, we define $V_\mathrm{jo}=V_{\mathrm{co}} \sqcup V_{\mathrm{at}}$ and $E_\mathrm{jo}=E_{\mathrm{co}} \sqcup E_{\mathrm{at}} \sqcup \{(o,o_{\mathrm{at}})\}$ - see Figure \ref{fig:attaching}. The Schr\"{o}dinger operator $\mathcal{H}_{\mathrm{jo}}$ on $G_\mathrm{jo}$ is defined by putting any non-zero weight on the newly added bridge and ensuring that its restrictions to $G_\mathrm{co}$ and $G_\mathrm{at}$ are $\mathcal{H}_{\mathrm{co}}$ and $\mathcal{H}_{\mathrm{at}}$ respectively. Later, the edge weight on the bridge will arise from an ambient Schr\"{o}dinger operator on the whole graph within which $G_\mathrm{jo}$ is a subgraph. 

\begin{figure}[h!]
\centering
\includegraphics[]{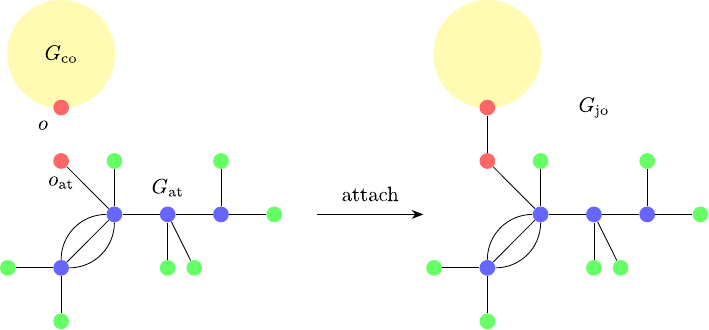}
\caption{Attaching  $G_\mathrm{at}$ to $G_\mathrm{co}$ by a bridge.}
\label{fig:attaching}
\end{figure}

\begin{lemma}\label{lemma:lemmaforcontradiction}
   If $\lambda \in \mathbb{R}$ is a root of the generalized matching polynomial of $G_{\mathrm{jo}} \setminus L_0$ for every $L_0 \subset L \setminus \{o_{\mathrm{at}}\}$,  
then $\lambda$ is also a root of the generalized matching polynomial of $G_{\mathrm{co}}$ and of $G_{\mathrm{co}}$ with an additional leaf $o_{\mathrm{at}}$ attached to $o$, in both cases with the Schrödinger operator $\mathcal{H}_{\mathrm{jo}}$ restricted to the respective graph.

\end{lemma}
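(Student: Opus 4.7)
The plan is to apply the matching-polynomial recursion to the bridge $e = \{o, o_{\mathrm{at}}\}$, which is the unique edge of $G_{\mathrm{jo}}$ between $G_{\mathrm{co}}$ and $G_{\mathrm{at}}$. Splitting matchings of $G_{\mathrm{jo}} \setminus L_0$ according to whether they contain $e$ gives, for every $L_0 \subseteq L \setminus \{o_{\mathrm{at}}\}$,
\[
m_{G_{\mathrm{jo}} \setminus L_0}^{\mathcal{H}}(\lambda) = m_{G_{\mathrm{co}}}^{\mathcal{H}}(\lambda)\, m_{G_{\mathrm{at}} \setminus L_0}^{\mathcal{H}}(\lambda) - |w_e|^{2}\, m_{G_{\mathrm{co}} \setminus \{o\}}^{\mathcal{H}}(\lambda)\, m_{G_{\mathrm{at}} \setminus (L_0 \cup \{o_{\mathrm{at}}\})}^{\mathcal{H}}(\lambda).
\]
Writing $P = m_{G_{\mathrm{co}}}^{\mathcal{H}}(\lambda)$, $Q = m_{G_{\mathrm{co}} \setminus \{o\}}^{\mathcal{H}}(\lambda)$, $\alpha = |w_e|^{2}$, $A(L_0) = m_{G_{\mathrm{at}} \setminus L_0}^{\mathcal{H}}(\lambda)$, and $B(L_0) = m_{G_{\mathrm{at}} \setminus (L_0 \cup \{o_{\mathrm{at}}\})}^{\mathcal{H}}(\lambda)$, the hypothesis reads
\[
P\,A(L_0) = \alpha Q\,B(L_0) \qquad \text{for every } L_0 \subseteq L \setminus \{o_{\mathrm{at}}\}. \tag{$\dagger$}
\]
Applying Lemma \ref{lem:recursion} at the leaf $o_{\mathrm{at}}$ of $G_{\mathrm{co}}$ with $o_{\mathrm{at}}$ attached to $o$ also gives that the generalized matching polynomial of this graph equals $(\lambda - \mathcal{V}_{o_{\mathrm{at}}}) P - \alpha Q$. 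The two conclusions of the lemma therefore reduce to the single statement $P = Q = 0$.

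To prove $P = Q = 0$, it suffices to exhibit two leaf sets $L_0^{(1)}, L_0^{(2)} \subseteq L \setminus \{o_{\mathrm{at}}\}$ with
\[
A\!\bigl(L_0^{(1)}\bigr)\, B\!\bigl(L_0^{(2)}\bigr) \neq A\!\bigl(L_0^{(2)}\bigr)\, B\!\bigl(L_0^{(1)}\bigr),
\]
because the two corresponding instances of $(\dagger)$ then form a rank-two linear system in $(P, \alpha Q)$ and force both to vanish.

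I would produce such a pair by induction on $|V_{\mathrm{at}} \setminus (L \cup \{o_{\mathrm{at}}\})|$, following the inductive structure of Lemma \ref{lemma:randomleafavoideigenvalue}. In the base case this quantity is zero, so connectedness of $G_{\mathrm{at}}$ forces every non-root vertex to be a leaf adjacent to $o_{\mathrm{at}}$; fixing any $\ell \in L \setminus \{o_{\mathrm{at}}\}$ and taking $L_0^{(1)} = (L \setminus \{o_{\mathrm{at}}\}) \setminus \{\ell\}$, $L_0^{(2)} = L \setminus \{o_{\mathrm{at}}\}$, a direct computation gives determinant $-|w_{\{o_{\mathrm{at}}, \ell\}}|^{2} \neq 0$. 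For the inductive step I would pick $u_1 \in V_{\mathrm{at}} \setminus (L \cup \{o_{\mathrm{at}}\})$ together with the leaves $\ell_1, \ldots, \ell_n \in L \setminus \{o_{\mathrm{at}}\}$ of $G_{\mathrm{at}}$ adjacent to $u_1$ (at least one exists by the hypothesis on $G_{\mathrm{at}}$), and apply Lemma \ref{lem:recursion} successively at $\ell_1, \ldots, \ell_n$ to express both $A(L_0)$ and $B(L_0)$ as the same $\mathbb{R}$-linear combination of matching polynomials on subgraphs of $G_{\mathrm{at}} \setminus \{u_1, \ell_1, \ldots, \ell_n\}$. The connected component containing $o_{\mathrm{at}}$ inherits the hypothesis of Lemma \ref{lemma:randomleafavoideigenvalue} with root $o_{\mathrm{at}}$ by the same case analysis used in the proof of that lemma, and the inductive hypothesis applied to it supplies the required pair.

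The main obstacle is the combinatorial bookkeeping around newly-created leaves in the reduced graph: a degree-$2$ vertex adjacent to $u_1$ can become a leaf after deleting $u_1$. As in the proof of Lemma \ref{lemma:randomleafavoideigenvalue}, any such new leaf is confined to a two-vertex connected component for which the determinantal claim is a direct computation; this ensures that the leaf sets produced by the induction genuinely pull back to subsets of $L \setminus \{o_{\mathrm{at}}\}$ in the original graph.
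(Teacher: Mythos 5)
Your reduction is correct and takes a genuinely different route from the paper. The cut-edge factorization $m_{G_{\mathrm{jo}}\setminus L_0}^{\mathcal{H}}(\lambda)=P\,A(L_0)-\alpha Q\,B(L_0)$ is valid (the bridge is the unique edge between the two sides, and matching polynomials are multiplicative over disjoint unions), and it cleanly decouples $G_{\mathrm{co}}$ from $G_{\mathrm{at}}$: the lemma becomes the purely $G_{\mathrm{at}}$-dependent statement that the vectors $\bigl(A(L_0),B(L_0)\bigr)$ span $\mathbb{R}^2$ as $L_0$ varies. The paper instead keeps $G_{\mathrm{co}}$ attached throughout and runs an induction on $G_{\mathrm{jo}}$ itself, repeatedly invoking Lemma~\ref{lemma:randomleafavoideigenvalue} on side components and reattaching a smaller $G_{\mathrm{at}}$; your linear-algebra reformulation is arguably more transparent and isolates exactly what is being used. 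Your base case computation is also correct.

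However, the inductive step as described has two concrete gaps. First, applying Lemma~\ref{lem:recursion} successively at $\ell_1,\ldots,\ell_n$ does \emph{not} express $A(L_0)$ as a combination of matching polynomials of subgraphs of $G_{\mathrm{at}}\setminus\{u_1,\ell_1,\ldots,\ell_n\}$: the term in which no leaf-edge of $u_1$ is used still contains $u_1$, and eliminating $u_1$ by a further recursion at $u_1$ drags in its non-leaf neighbours. The workable move is the reverse direction: with $L_0(M)=M\sqcup L_2'\sqcup\{\ell_1,\ldots,\ell_n\}$ (where $L_2'$ is chosen by Lemma~\ref{lemma:randomleafavoideigenvalue} so that the components not containing $o_{\mathrm{at}}$ have nonvanishing matching polynomial, with product $c\neq 0$), a single application of the recursion at $\ell_1$ gives $c\,|w_{\{u_1,\ell_1\}}|^2 A_H(M)=(\lambda-\mathcal{V}_{\ell_1})A\bigl(L_0(M)\bigr)-A\bigl(L_0(M)\setminus\{\ell_1\}\bigr)$ and the analogous identity for $B_H$; bilinearity and antisymmetry of the $2\times 2$ determinant then transfer non-degeneracy from the component $H$ containing $o_{\mathrm{at}}$ up to $G_{\mathrm{at}}$. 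Second, your induction fails if $H$ is a singleton (only one vector $(\lambda-\mathcal{V}_{o_{\mathrm{at}}},1)$ is available, so the inductive hypothesis cannot produce two independent ones), which happens e.g.\ when $|V_{\mathrm{at}}\setminus(L\cup\{o_{\mathrm{at}}\})|=1$ and $o_{\mathrm{at}}$ has no attached leaves. You need either a second base case at $|V_{\mathrm{at}}\setminus(L\cup\{o_{\mathrm{at}}\})|=1$ (a direct computation, which does work) together with choosing $u_1$ distinct from a fixed neighbour $u$ of $o_{\mathrm{at}}$ so that $u$ and its leaves survive in $H$ — exactly the device the paper uses — or a separate treatment of the degenerate component. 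With these two repairs your argument goes through.
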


Thus, if $\lambda \in \mathbb{R}$ is not a root of the generalized matching polynomials of one of $G_{\mathrm{co}}$ or $G_{\mathrm{co}}$ with an extra leaf attached to $o$, then there exists $L_0 \subset L\setminus \{o_\mathrm{at}\}$, such that $\lambda$ is not a root of the generalized matching polynomial of $G_{\mathrm{jo}}\setminus L_0$.

\begin{proof}
The proof is similar to Lemma~\ref{lemma:randomleafavoideigenvalue}, proceeding by induction on $|V_{\mathrm{at}}\setminus (L\cup \{o_\mathrm{at}\})|$. Again without loss of generality, we assume that all graphs considered here have no self-loops. For brevity, we write \(\mathcal{H}=\mathcal{H}_{\mathrm{jo}}\). 

First, if $|V_{\mathrm{at}}\setminus (L\cup \{o_\mathrm{at}\})|=0$, then $\lambda$ is a root of the generalized matching polynomials of $G_\mathrm{jo}$ after deleting all leaves attached to $o_\mathrm{at}$ or all but one leaf, say $l$, attached to $o_\mathrm{at}$ (there is at least one leaf since $|V_\mathrm{at}|\geq 2$). Therefore, we have 
\[
m_{G_\mathrm{co}\sqcup \{o_\mathrm{at}\}}^\mathcal{H}(\lambda)=m_{G_\mathrm{co}\sqcup \{o_\mathrm{at},l\}}^\mathcal{H}(\lambda)=0,
\]
where we see $G_\mathrm{co}\sqcup \{o_\mathrm{at}\}$ and $G_\mathrm{co}\sqcup \{o_\mathrm{at},l\}$ as induced subgraphs of $G_\mathrm{jo}$. Then the conclusion follows by the recursion Lemma~\ref{lem:recursion} since 
\[
\left|w_{\{o_{\mathrm{at}},l\}}\right|^2\cdot m_{G_\mathrm{co}}^\mathcal{H}(\lambda)=-m_{G_\mathrm{co}\sqcup \{o_\mathrm{at},l\}}^\mathcal{H}(\lambda)+\left(\lambda-\mathcal{V}_l\right) \cdot m_{G_\mathrm{co}\sqcup \{o_\mathrm{at}\}}^\mathcal{H}(\lambda)=0.
\]

Next, if $|V_\mathrm{at}\setminus (L\cup \{o_\mathrm{at}\})|=1$, let $u \in V_\mathrm{at}\setminus (L\cup \{o_\mathrm{at}\})$. Denote the subgraph of $G_{\mathrm{jo}}$ by deleting all leaves attached to $o_{\mathrm{at}}$ and all but one leaf attached to $u$, say $l_u$, as $G_{\mathrm{int}}$. By assumption, since we only deleted leaves in $L$, one must have that $m_{G_{\mathrm{int}}}(\lambda)=m_{G_{\mathrm{int}}\setminus \{l_u\}}(\lambda)=0.$
By the recursion relation for matching polynomials Lemma \ref{lem:recursion} we get 
    $$m_{G_{\mathrm{int}}}^\mathcal{H}(\lambda)=\left(\lambda-\mathcal{V}_{l_u}\right) \cdot m_{G_{\mathrm{int}}\setminus \{l_u\}}^\mathcal{H}(\lambda)-\left|w_{\{u,l_u\}}\right|^2\cdot m_{G_{\mathrm{int}}\setminus \{l_u,u\}}^\mathcal{H}(\lambda),$$
so that $m_{G_{\mathrm{int}}\setminus \{l_u,u\}}^\mathcal{H}(\lambda)=0$. Then, applying the recursion relation once more, we get 
    $$m_{G_{\mathrm{int}}\setminus \{l_u\}}^\mathcal{H}(\lambda)=\left(\lambda-\mathcal{V}_u\right)\cdot m_{G_{\mathrm{int}}\setminus \{l_u,u\}}^\mathcal{H}(\lambda)-\left(\sum_{e\in \vec{E};\, o(e)=u,\,t(e)=o_\mathrm{at}}|w_{e}|^2\right)\cdot m_{G_{\mathrm{int}}\setminus \{l_u,u,o_{\mathrm{at}}\}}^\mathcal{H}(\lambda).$$
It follows then that $m_{G_{\mathrm{int}}\setminus \{l_u,u,o_\mathrm{at}\}}^\mathcal{H}(\lambda)=0$. But, $G_{\mathrm{int}}\setminus \{l_u,u\}$ is exactly $G_{\mathrm{co}}$ with an extra leaf attached to $o$ and $G_{\mathrm{int}}\setminus \{l_u,u,o_\mathrm{at}\}=G_\mathrm{co}$ as required.

Now assume the lemma holds for $|V_{\mathrm{at}}\setminus (L\cup \{o_\mathrm{at}\})|\leq k$, with $k\geq 1$. For $|V_{\mathrm{at}}\setminus (L\cup \{o_\mathrm{at}\})|=k+1$, fix a $u\in V_{\mathrm{at}}\setminus (L\cup \{o_\mathrm{at}\})$ that is adjacent 
to $o_{\mathrm{at}}$. Then, choose $u_1 \in V_\mathrm{at}\setminus (L\cup\{u, o_{\mathrm{at}}\})$ and let
$\{l_1,l_2,\cdots,l_n\} \subset L$ be the leaves attached to $u_1$. For each component of the intermediate graph $G_{\mathrm{int}}:=G_{\mathrm{jo}}\setminus \{u_1,l_1,l_2,\cdots,l_n\}$, either it contains $o_\mathrm{at}, u$ and all vertices from $G_{\mathrm{co}}$, which we call $G_{\mathrm{int}}'$, or it satisfies the assumption of Lemma~\ref{lemma:randomleafavoideigenvalue} after picking a root for the component identically as in the proof of Lemma~\ref{lemma:randomleafavoideigenvalue}, see Figure \ref{fig:lemmaforcontradiction}.

\begin{figure}[h!]
\centering
\includegraphics[]{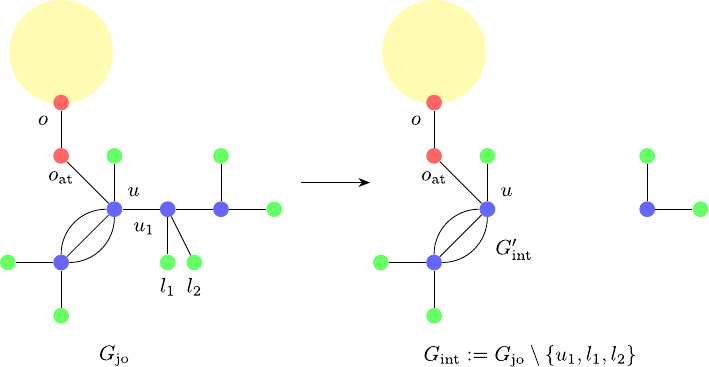}
\caption{Illustration for the proof of Lemma \ref{lemma:lemmaforcontradiction}.}
\label{fig:lemmaforcontradiction}
   \end{figure}

Now, decompose $L\setminus \{o_{\mathrm{at}}\}$ as $L\setminus \{o_{\mathrm{at}}\}=L_1\sqcup L_2 \sqcup \{l_1, l_2, \cdots l_n\}$, such that $L_1 \subset L$ are the leaves in $L\setminus \{o_{\mathrm{at}}\}$ that are in the connected component $G_{\mathrm{int}}'$. In particular, $L_1\neq\emptyset$ as $u$ has leaves attached.  Using Lemma \ref{lemma:randomleafavoideigenvalue} for those components that do not contain $o_{\mathrm{at}}$, there exists $L_2' \subset L_2$ such that after deleting $L_2'$, the generalized matching polynomial of the union of these components does not have $\lambda$ as a root. 

From our assumption, for any $L_1'\subset L_1$, the matching polynomial of $G_{\mathrm{jo}}\setminus (L_1' \sqcup L_2' \sqcup \{u_1,l_1,l_2,\cdots,l_n\})$ has root $\lambda$. Indeed, $n\geq 1$ and so by the recursion Lemma~\ref{lem:recursion} we have 
    \begin{align*}
    m_{G_\mathrm{jo}\setminus(\{\ell_2,\ldots,\ell_n\}\sqcup L_2'\sqcup L_1')}^\mathcal{H}(\lambda)&\\
    &\hspace{-3.5cm}=\left(\lambda -\mathcal{V}_{l_1}\right)\cdot m_{G_\mathrm{jo}\setminus(\{\ell_1,\ell_2,\ldots,\ell_n\}\sqcup L_2'\sqcup L_1')}^\mathcal{H}(\lambda)-\left|w_{\{u_1,l_1\}}\right|^2\cdot m_{G_\mathrm{jo}\setminus(\{u_1,\ell_1,\ell_2,\ldots,\ell_n\}\sqcup L_2'\sqcup L_1')}^\mathcal{H}(\lambda),
    \end{align*}
but by assumption, $m_{G_\mathrm{jo}\setminus(\{\ell_2,\ldots,\ell_n\}\sqcup L_2'\sqcup L_1')}^\mathcal{H}(\lambda)=m_{G_\mathrm{jo}\setminus(\{\ell_1,\ell_2,\ldots,\ell_n\}\sqcup L_2'\sqcup L_1')}^\mathcal{H}(\lambda)=0$. 

Since the matching polynomial of a disconnected graph is the product of the matching polynomials of its components, we obtain that the generalized matching polynomial of $G_{\mathrm{int}}'\setminus L_1'$ has root $\lambda$ for any choice of $L_1'\subseteq L_1$ due to the choice of $L_2'$. But $G_{\mathrm{int}}'$ can be constructed by attaching the connected component of $G_\mathrm{at}\setminus \{u_1\}$ containing $o_{\mathrm{at}},u$ to $G_{\mathrm{co}}$. Moreover, this connected component certainly has strictly less than $k+1$ vertices after removing its root and all of its leaves. It follows that we can use the inductive step on this new attaching graph to obtain the desired conclusion for $|V_{\mathrm{at}}\setminus \left(L \cup \{o_{\mathrm{at}}\}\right)|=k+1$.
\end{proof}

\subsection{Proof of Theorem \ref{thm:regular}}
\label{sec:prooreg}

We now have all the tools we need to prove Theorem~\ref{thm:regular}. Recall that in the proof outline, we considered a graph where all leaf-blocks were replaced by singletons. The next lemma shows that for this graph and any $\lambda\in\mathbb{R}$, we can indeed delete a degree-$2$ subgraph as well as some leaves to ensure that $\lambda$ is not a root of the generalized matching polynomial of the remaining graph.

\begin{lemma}\label{lemma: mainlemma}
    Let $G$ be a finite, connected multi-graph with Schr\"{o}dinger operator $\mathcal{H}$ and a non-empty leaf set \(L(G)\). Suppose that 
    \begin{itemize}
        \item for all \(v\in V_G\setminus L(G)\), \(\mathrm{deg}_G(v)=d\), with \(d\geq3\) odd;
        \item all leaf blocks of
        \(G\) are singletons.
    \end{itemize}
    Then for any $\lambda \in \mathbb{R}$, there exists $L_0\subset L$ and a degree-$2$ subgraph $C$ in $G$ such that $\lambda$ is not a root of the generalized matching polynomial of $G\setminus (L_0 \sqcup C)$. 
\end{lemma}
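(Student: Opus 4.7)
The plan is to induct on the number $k$ of non-leaf bridge-blocks of $G$. The machinery driving both base case and inductive step combines Proposition~\ref{prop: smoothprocedure} (which, in any Type~II block, produces two degree-$2$ subgraphs covering all degree-$d$ vertices — one containing a prescribed sub-maximal-degree vertex, one avoiding it) with Lemmas~\ref{lemma:randomleafavoideigenvalue} and~\ref{lemma:lemmaforcontradiction} (which, via the matching-polynomial recursion Lemma~\ref{lem:recursion}, let us delete leaves to destroy $\lambda$ as a matching-polynomial root of graphs in which every non-root, non-leaf vertex has a leaf-neighbor).

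For the base case $k=1$, the single non-leaf block $B$ carries attached leaves in $G$, since $L(G)$ is non-empty. If $B$ is Type~I then every vertex of $B$ has degree strictly less than $d$ in $B$, hence carries an attached leaf in $G$; take $C=\emptyset$ and apply Lemma~\ref{lemma:randomleafavoideigenvalue} with any root. If $B$ is Type~II, pick a vertex $v$ of $B$ with $\mathrm{deg}_B(v)<d$ (which exists since $L(G)\neq\emptyset$), let $C$ be the degree-$2$ subgraph of $B$ not containing $v$ furnished by Proposition~\ref{prop: smoothprocedure}, and observe that $G\setminus C$ retains $v$ together with other sub-maximal-degree vertices of $B$ (each having attached leaves in $G$) and all the $G$-leaves attached to $B$; Lemma~\ref{lemma:randomleafavoideigenvalue} with root $v$ then produces the desired $L_0$.

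For the inductive step with $k\geq 2$, observe that the bridge-block tree contains at least two non-leaf vertices and hence at least one non-leaf bridge-block $B$ all but one of whose neighbors in the bridge-block tree are leaf-blocks. Let $v\in V_B$ be the unique vertex of $B$ incident to $B$'s non-leaf neighbor, let $B'$ be the subgraph of $G$ induced on $V_B$ together with all $G$-leaves attached to $B$, and define $G'$ by replacing $B'$ in $G$ with a single leaf $\ell$ attached via the bridge that joined $B$ to its non-leaf neighbor. Then $G'$ still satisfies the hypotheses with $k-1$ non-leaf blocks, so the inductive hypothesis yields $L'\subset L(G')$ and a degree-$2$ subgraph $C'$ of $G'$ with $\lambda$ not a root of $m^{\mathcal H}_{G'\setminus(L'\sqcup C')}$. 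Choose $C_B\subset B$ via Proposition~\ref{prop: smoothprocedure} (if $B$ is Type~II) or $C_B=\emptyset$ (if $B$ is Type~I) so that $v\in V_{C_B}$ exactly when $\ell\in L'$, set $C:=C'\sqcup C_B$, and begin with $L_0\supset L'\setminus\{\ell\}$. The connected components of $G\setminus(C\sqcup(L'\setminus\{\ell\}))$ partition into three classes: components lying entirely in $G'\setminus\{\ell\}$, for which $\lambda\notin\mathrm{roots}$ is inherited from the inductive conclusion; components lying entirely in $B'\setminus V_{C_B}$, each satisfying the hypothesis of Lemma~\ref{lemma:randomleafavoideigenvalue} and thus contributing further leaves to $L_0$; and — only when $\ell\notin L'$ — a single joined component gluing the $G'$-component formerly containing $\ell$ (with $\ell$ removed) to the $B'$-component containing $v$ through the original bridge. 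On this last component, Lemma~\ref{lemma:lemmaforcontradiction} applies with attaching root $v$, since every non-$v$ vertex of the $B'$-side is a leaf or leaf-adjacent; were no leaf-subset on the $B'$-side to kill $\lambda$, Lemma~\ref{lemma:lemmaforcontradiction} would force $\lambda$ to be a root of both the $G'$-component and that component with $\ell$ reattached, contradicting the inductive choice of $L'$ and $C'$.

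The main obstacle I anticipate lies in the bookkeeping around the reattachment step: ensuring that the case split on whether $\ell\in L'$ triggers the correct alternative of Proposition~\ref{prop: smoothprocedure}, verifying that the joined component actually meets the hypothesis of Lemma~\ref{lemma:lemmaforcontradiction} (notably that $v$ plays the role of $o_\mathrm{at}$ and every non-$v$ vertex of the $B'$-side remains a leaf or leaf-adjacent after the surgery), and handling the Type~I and Type~II alternatives for $B$ uniformly — the Type~I case requires no block-internal degree-$2$ deletion at all, so $C_B=\emptyset$ must be threaded through the same case analysis. Once these details are straightened out, the remainder is a mechanical composition of the auxiliary lemmas via Lemma~\ref{lem:recursion}.
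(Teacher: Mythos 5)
Your proposal is correct and follows essentially the same route as the paper: induction on the number of non-leaf bridge-blocks, selecting a block $B$ with a single non-leaf neighbour, replacing $B$ and its leaves by a stand-in leaf $\ell$, and reattaching via Proposition~\ref{prop: smoothprocedure} together with Lemmas~\ref{lemma:randomleafavoideigenvalue} and~\ref{lemma:lemmaforcontradiction}. The only bookkeeping you should tighten is the one you already flag: the paper's primary case split is on whether the bridge endpoint $o$ lies in the inherited degree-$2$ subgraph $C'$ (if it does, $B'$ is disconnected and no joined component exists regardless of whether $\ell\in L'$, with the $v$-component then rooted at $v$ in Lemma~\ref{lemma:randomleafavoideigenvalue}), and in the Type~I case a joined component can persist even when $\ell\in L'$ — which is harmless precisely because Lemma~\ref{lemma:lemmaforcontradiction} contradicts \emph{both} possible inductive outcomes at once.
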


\begin{proof}
    We proceed by induction on the number of bridge-blocks $N(G)$ that are not leaf blocks. If $N(G)=0$, then $G$ is a graph with \(2\) vertices and an edge connecting them, in which case we can simply delete all of the vertices (as they are leaves) to obtain the empty graph whose generalized matching polynomial has no roots.
    
    Suppose next that $N(G)=1$ so that $G$ consists of a single bridge-block $B$ with leaves attached to it. If $B$ is a Type I block, then choosing any vertex in the block to be distinguished, the graph satisfies the hypothesis of Lemma \ref{lemma:randomleafavoideigenvalue} as every vertex in $B$ has attaching leaves from which the conclusion immediately follows.
    
    If $B$ is instead Type II, by Proposition \ref{prop: smoothprocedure}, we can find a degree-$2$ subgraph $C$ of $B$ containing all vertices except possibly some of those that are incident to bridges in $G$. Upon deletion of $C$ from $G$ along with any leaves that were incident to vertices in $C$, the remaining graph consists of components all of which (after arbitrarily choosing a distinguished vertex) satisfy the assumptions of Lemma \ref{lemma:randomleafavoideigenvalue} which upon application gives the desired conclusion. 

    Now assume that the lemma holds for all $G$ with $N(G) \leq k$ for some $k\geq 1$. For $N(G)=k+1$, we can always find a bridge-block $B$ (that is not a leaf in $L$) such that in the bridge-block graph it is connected to exactly one non-leaf vertex. Denote by $G_0$ the connected subgraph of $G$ obtained by deleting $B$ and all leaves attached to $B$.

    To see that such a $B$ exists, start at any non-leaf block in the bridge-block tree. If it has only one non-leaf neighbor then we are done, so assume this is not the case, and traverse one of the bridges to a non-leaf block. This new bridge-block either has the desired property or also has a new outgoing bridge not attached to a leaf-block. In the latter case, we again traverse this bridge, and continue in this manner. Note that we never return to a bridge-block that we have already visited since we only ever traverse new edges in the bridge-block tree and so returning to a previously visited block would mean there is a cycle in the tree. Thus, since there are only finitely many non-leaf bridge-blocks, the process must terminate at some point, and when it does, we have found a desired bridge-block $B$.
    
    Suppose now that $\lambda \in \mathbb{R}$ violates the conclusion of the lemma. Let $\{o,o'\}$ be the bridge separating $B$ and $G_0$, with $o'\in B$ and $o\in G_0$. Then the induced subgraph on $G_0 \sqcup \{o'\}$ satisfies our assumption in this lemma, with $N(G_0 \sqcup \{o'\})=k$. Therefore, we can find a degree-$2$ subgraph $C$ of $G_0 \sqcup \{o'\}$ (thus also a degree-$2$ subgraph of $G_0$ and $G$), and a set $L'$ of leaves in $G_0 \sqcup \{o'\}$, such that the generalized matching polynomial of $G_0 \sqcup \{o'\} \setminus\left( L' \sqcup C\right)$ does not have $\lambda$ as a root.
    
    We will now use $C$ and $L'$ to construct a new degree-$2$ subgraph and collection of leaves in $G$ whose deletion necessarily does not have $\lambda$ as a root of the generalizing matching polynomial to give a contradiction. There are several cases to consider depending on whether or not $o$ is a vertex in $C$.
    ~\newline
    
    \noindent \textbf{Case 1:} 
    $o$ is a vertex of $C$. Then after deleting $L'\setminus \{o'\}$ and $C$ from \(G\), all components except for the one containing $B$ and its attaching leaves do not have $\lambda$ as a root of their generalized matching polynomial. We show now that the component containing $B$ also does not have $\lambda$ as a root of its matching polynomial after possibly deleting a degree-$2$ subgraph and some leaves, giving the desired contradiction. There are two subcases:
    
\begin{itemize}
\item 
\textbf{(1a)} If $B$ is Type I, then $B$ and its attaching leaves satisfy the condition of Lemma~\ref{lemma:randomleafavoideigenvalue} when taking $o'$ as the distinguished root. We can then delete some leaves (that are not $o'$) such that the generalized matching polynomial of what remains does not have $\lambda$ as a root. See Figure~\ref{fig:1a} for an illustration.

\begin{figure}[h!]
\centering
\includegraphics[]{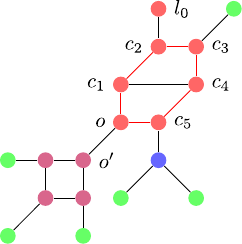}
\caption{Case (1a), $B$ is the subgraph on purple nodes (a \(4\)-cycle). $C$ is the degree-$2$ subgraph on nodes $\{o,c_1,\cdots,c_5\}$ with edges colored red and $L'\setminus\{o'\}=\{l_0\}$.}
\label{fig:1a}
\end{figure}

\item \textbf{(1b)} If $B$ is Type II, then there is a vertex in the induced subgraph on $B$ that has degree $d$. We can then use Proposition \ref{prop: smoothprocedure} to obtain a degree-$2$ subgraph $C'$ of $B$ that contains $o'$ and all vertices that do not have attaching leaves. By deleting $C'$ along with all leaves attached to $C'$, each component of the remaining graph satisfies the assumptions of Lemma \ref{lemma:randomleafavoideigenvalue}. Upon application, we find a further collection of leaves that upon deletion leave the remaining graph with generalized matching polynomial for which $\lambda$ is not a root. See Figure \ref{fig:1b} for an illustration.

\begin{figure}[h!]
\centering
\includegraphics[]{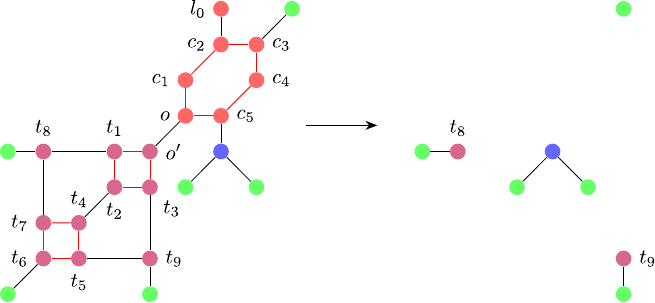}
\caption{Case (1b), $B$ is the subgraph on purple nodes. $C$ is the degree-$2$ subgraph on nodes $\{o,c_1,\cdots,c_5\}$ with edges colored red. $L'\setminus\{o'\}=l_0$. The degree-$2$ subgraph in $B$ that we choose is the graph on nodes $\{o',t_1,\cdots,t_3, t_4, \cdots, t_7\}$ with edges colored red.}
\label{fig:1b}
\end{figure}
\end{itemize}

\textbf{Case 2:} $o$ is not a vertex of $C$. Again after deleting $L'\setminus \{o'\}$ and $C$, all components except for the one containing $B$ and its leaves do not have $\lambda$ as the root of their matching polynomial. The difference this time is that this component contains $o$ (and possibly some other vertices from $G_0$). Now $\{o,o'\}$ is a still a bridge. Define $G_0'$ as the connected component that contains $o$ after deleting the bridge $\{o,o'\}$. There are two sub-cases:
\begin{itemize}
    \item \textbf{(2a)} If $B$ is Type I then we apply Lemma \ref{lemma:lemmaforcontradiction} with $G_0'$ as $G_{\mathrm{co}}$ and $B$ with its attaching leaves as $G_{\mathrm{at}}$ rooted at the vertex $o'$. If $\lambda$ is a root of the generalized matching polynomial of the joined graph $G_{\mathrm{jo}}$ after deleting any collection of the attaching leaves, we conclude that the generalized matching polynomial of both $G_0'$ and $G_0'$ with the bridge from $o'$ to $o$ has $\lambda$ as its root. But this cannot be the case since one of these two is a connected component of $G_0\sqcup\{o'\}\setminus(L'\sqcup C)$ which cannot have $\lambda$ as a root of its generalized matching polynomial by construction. Hence, \(\lambda\) is not a root of the matching polynomial of \(G_{\mathrm{jo}}\), which is the connected component containing \(B\), up to the deletion of some leaves. See Figure \ref{fig:2a} for an illustration.

\begin{figure}[h!]
\centering
\includegraphics[]{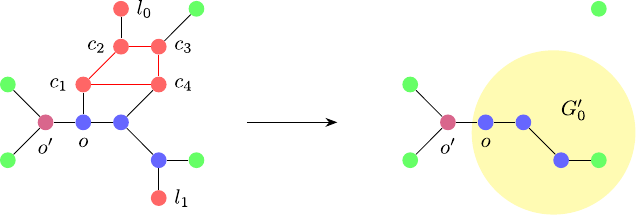}
\caption{Case (2a), $B$ is the subgraph on purple nodes (a singleton). $C$ is the degree-$2$ subgraph on nodes $\{c_1,\cdots,c_4\}$ with edges colored red and $L'\setminus\{o'\}=\{l_0,l_1\}$.}
\label{fig:2a}
\end{figure}
    
    \item \textbf{(2b)} If $B$ is Type II then we again we have two cases depending on whether $o'\in L'$.
    \begin{itemize}
        \item \textbf{(2b1)} If $o'\in L'$, then $\lambda$ is not a root of the generalized matching polynomial of $G_0'$. We thus choose a degree-$2$ subgraph $C'$ of $B$ that contains $o'$ and all vertices that are not attached to leaves by means of Proposition \ref{prop: smoothprocedure}. Then after deleting $C, L'\setminus\{o'\}$ and $C'$ along with all leaves attached to $C'$ from $G$, what remains is a collection of components which either do not have $\lambda$ as a root of their generalized matching polynomial, or they satisfy the assumptions of Lemma~\ref{lemma:randomleafavoideigenvalue}. Hence, after possibly deleting some extra leaves as given by Lemma~\ref{lemma:randomleafavoideigenvalue}, the remaining graph does not have $\lambda$ as a root of its generalized matching polynomial, thus giving a contradiction. See Figure~\ref{fig:2b1} for an illustration.
\begin{figure}[h!]
    \centering
\includegraphics[]{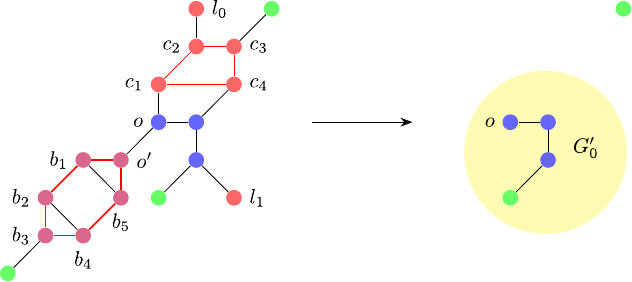}
\caption{Case (2b1), $B$ is the subgraph on purple nodes. $C$ is the degree-$2$ subgraph on nodes $\{c_1,\cdots,c_4\}$ with edges colored red. $L'=\{o',l_0,l_1\}$. The degree-$2$ subgraph in $B$ that we choose is the graph on nodes $\{o',b_1,\cdots,b_5\}$ with edges colored red.}
\label{fig:2b1}
\end{figure}

        \item \textbf{(2b2)} If $o'\notin L'$, then the generalized matching polynomial of $G_0'$ with the bridge $\{o,o'\}$ attached does not have $\lambda$ as a root. We again use Proposition~\ref{prop: smoothprocedure} to find and delete a degree-$2$ subgraph $C'$ from $B$ that does not contain $o'$ but does cover all vertices that do not have leaves attached. After also deleting all leaves attached to $C'$, each connected component of the remaining graph either satisfies the condition of Lemma~\ref{lemma:randomleafavoideigenvalue} or contains $o'$. In the first case, we delete some leaves according to Lemma~\ref{lemma:randomleafavoideigenvalue} to make sure the components don't have $\lambda$ as a root of their generalized matching polynomial. In the latter case, if this component is just $G_0'$ with $o'$ attached, then the contradiction is shown. If not, we treat this component as a new joint graph $G_\mathrm{jo}$ with $G_\mathrm{co}=G_0'$ and a corresponding attached graph with root $o'$. Using Lemma~\ref{lemma:lemmaforcontradiction}, we can delete some leaves to get the desired contradiction. See Figure \ref{fig:2b2} for an illustration.

\begin{figure}[H]
\centering
\includegraphics[]{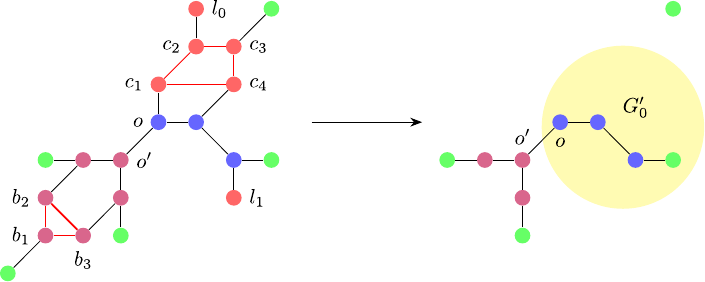}
\caption{Case (2b2), $B$ is the subgraph on purple nodes. $C$ is the degree-$2$ subgraph on nodes $\{c_1,\cdots,c_4\}$ with edges colored red. $L'=\{l_0,l_1\}$. The degree-$2$ subgraph in $B$ that we choose is the graph on nodes $\{b_1,b_2,b_3\}$ with edges colored red.}
\label{fig:2b2}
\end{figure}
    \end{itemize}   
\end{itemize}
\end{proof}

\begin{proof}[Proof of Theorem \ref{thm:regular}]

    Given any odd degree regular graph $G$ that is not bridge-less, by replacing each leaf-block in $G$ by a leaf, the new graph, denoted as $G_\mathrm{int}$ satisfies the conditions of Lemma~\ref{lemma: mainlemma}. Given any $\lambda\in\mathbb{R}$, we may thus find a subset $L_0$ of the leaves of $G_\mathrm{int}$ and a degree-$2$ subgraph $C$ in $G_\mathrm{int}$ such that the generalized matching polynomial of $G_\mathrm{int}\setminus (L_0\sqcup C)$ does not have $\lambda$ as a root. 
    
    The degree-$2$ subgraph $C$ also lives inside $G$. Every leaf in $G_\mathrm{int}$ corresponds to a leaf-block in the bridge-block tree of $G$. For such a leaf-block, we apply Proposition~\ref{prop: smoothprocedure} with $I$ containing only the vertex $v$ incident to the outgoing bridge. This results in two degree-$2$ subgraphs of this block, one containing all vertices and the other containing all vertices but $v$. If $\ell\in L_0$, we delete the former of these subgraphs in the leaf-block from $G$ and if $\ell\notin L_0$, we delete the latter of these subgraphs. The remaining graph coincides with $G_\mathrm{int}\setminus L_0$ up to possibly having some self-loops attached to undeleted leaves. The generalized matching polynomial of this remaining graph coincides with that of $G_\mathrm{int}\setminus L_0$ since  self-loops play no role in matching polynomials. Further deleting $C$ results in a graph whose generalized matching polynomial coincides with $G_\mathrm{int}\setminus (L_0\sqcup C)$. Thus we have obtained a degree-$2$ subgraph $\gamma$ in $G$ whose deletion results in a graph with generalized matching polynomial that does not have $\lambda$ as a root. It follows from Theorem \ref{thm:criteria} that $\lambda$ is not an eigenvalue of $\mathcal{H}^\mathrm{ab}$.
\end{proof}

\appendix

\section{Comparison to the criterion of Banks, Garza-Vargas, and Mukherjee}\label{ap}

Let $G=(V,E)$ be a finite multi-graph and $H=(V_H,E_H)$ a subgraph of $G$. Let $\mathrm{cc}(H)$ be the number of connected components of $H$ and let $\partial H=\partial_GH$ denote the vertex 1-boundary of $H$, that is,
\[
\partial H = \partial_G H := \{ v \in V \setminus V_H: \exists\, u \in V_H \ \text{that is adjacent to $v$} \}.
\]
 If $S$ is a subset of $G$, let $G[S]$ denote the induced subgraph of $G$ on $S$.

Banks, Garza-Vargas and Mukherjee \cite{banks2022point} use the following criterion for $G=(V,E)$ a finite multi-graph and $\lambda \in \mathbb{R}$.\\
{\bf B-GV-M($G,\lambda$)}: There exists $S \subset V$ such that
    \begin{itemize}
        \item $G[S]$ is acyclic, i.e., a forest.
        \item $\lambda$ is an eigenvalue of $\mathcal{H}$ restricted to each connected component of $G[S]$;
        \item The number of boundary points of $G[S]$ is strictly less than the number of its connected components: 
            $$\left|\partial G[S] \right| <\mathrm{cc}(G[S]).$$
    \end{itemize}
\begin{theorem}[{\cite[Corollary 3.4]{banks2022point}}]
\label{thm:criteriaforuniversalcover}
    Let $G$ be a finite multi-graph and $\mathcal{H}$ a Schr\"{o}dinger operator on $G$. Let $\mathcal{H}^\uni$ denote the pullback of $\mathcal{H}$ to the universal cover $\tilde{G}$. 
    Then $\mathcal{H}^\uni$ has $\lambda$ as an eigenvalue if and only if {\bf B-GV-M($G,\lambda$)} holds. 
\end{theorem}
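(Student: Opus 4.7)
The plan is to prove the equivalence via the intermediate fact that, on the tree $\tilde G$, a $\lambda$-eigenfunction of $\mathcal H^\uni$ in $\ell^2(\tilde G)$ exists if and only if a non-zero finitely supported function $\hat\psi : \tilde G \to \mathbb{C}$ satisfying $(\mathcal H^\uni - \lambda)\hat\psi = 0$ pointwise exists. This reduction is a standard consequence of the tree structure of $\tilde G$ together with the $\pi_1(G)$-invariance of $\mathcal H^\uni$ (the Aomoto framework). Once in place, the rest of the argument is combinatorial and splits naturally into two directions.

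\emph{Sufficiency.} Given $S$ satisfying B-GV-M, write $H = G[S]$ with components $T_1, \dots, T_k$ where $k = \mathrm{cc}(H)$, and fix $\lambda$-eigenfunctions $\phi_i$ of $\mathcal H|_{T_i}$. The plan is to assemble a finitely supported eigenfunction $\psi = \sum_{i=1}^k c_i \tilde\phi_i$ supported on a finite $\tilde H \subset \tilde G$ consisting of pairwise disjoint lifts $\tilde T_i$ of each component, glued coherently through lifts of edges to vertices of $\partial H$. The eigenvalue equation is automatic on the interior of $\tilde H$, and at each boundary vertex it reduces to a linear constraint on the $c_i$. One would then argue, using the B-GV-M inequality together with a careful choice of lifts along a spanning structure of the incidence graph between the $T_i$ and $\partial H$, that at most $|\partial H| < k$ independent constraints arise, so a non-trivial solution $(c_i)$ exists.

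\emph{Necessity.} Take a finitely supported $\lambda$-eigenfunction $\hat\psi$ on $\tilde G$, set $F := \mathrm{supp}(\hat\psi)$ and $S := p(F)$ where $p : \tilde G \to G$ is the covering map. After a component-injective refinement of $F$, the forest structure of $\tilde G|_F$ projects to a forest $G[S]$, and the restriction of $\hat\psi$ to each component of $F$ descends to a $\lambda$-eigenfunction of $\mathcal H|_{T_i}$ on the corresponding $T_i$. The inequality $|\partial G[S]| < \mathrm{cc}(G[S])$ follows because the boundary equations at vertices of $\partial_{\tilde G} F$ impose $|\partial G[S]|$ independent linear conditions on the $\mathrm{cc}(G[S])$ component eigenfunction coefficients, and the existence of a nontrivial $\hat\psi$ forces strict inequality.

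The main obstacle is the careful matching between the lift $\tilde H$ (in sufficiency) or the support $F$ (in necessity) and the boundary structure in $G$: the bipartite incidence graph between the components $T_i$ and $\partial H$ need not be a tree, so unfolding into $\tilde G$ can multiply boundary contributions. The B-GV-M condition is precisely what allows this to be resolved --- by choosing lifts along a spanning tree of the incidence graph and controlling the extra constraints from remaining edges --- but the bookkeeping is intricate and is the heart of the proof. A secondary obstacle is the preliminary reduction to compactly supported eigenfunctions, itself a nontrivial spectral-theoretic statement about periodic trees.
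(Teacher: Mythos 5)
This statement is not proved in the paper: it is quoted verbatim from Banks--Garza-Vargas--Mukherjee \cite[Corollary 3.4]{banks2022point} and used as a black box in Appendix~\ref{ap}, so there is no internal proof to compare against. Judged on its own terms, your proposal has the right shape for the sufficiency direction --- lifting the components $T_i$ to pairwise disjoint subtrees of $\tilde G$, gluing them at lifts of boundary vertices, and using $|\partial H|<\mathrm{cc}(H)$ to get more free coefficients than linear constraints is essentially Aomoto's construction --- although as written it is a plan rather than a proof: the choice of lifts and the verification that only $|\partial H|$ constraints survive (rather than one constraint per lift of a boundary vertex) is exactly the content that must be supplied.

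The necessity direction contains a genuine gap. You set $S:=p(F)$ for $F=\mathrm{supp}(\hat\psi)$ and assert that the forest structure of $\tilde G|_F$ "projects to a forest $G[S]$" after a component-injective refinement. But the covering map $p:\tilde G\to G$ is not injective on finite subtrees (for a bouquet, every path of length at least $2$ in the regular tree has all of its vertices over the single base vertex), so a component of $F$ need not project injectively; its image need not induce a forest, since $G[S]$ contains \emph{all} edges of $G$ between vertices of $S$, including edges not covered by edges of $F$, and can easily contain cycles; and consequently $\hat\psi$ restricted to a component does not descend to an eigenfunction of $\mathcal H$ on the corresponding induced subgraph. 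The claim that the boundary equations impose exactly $|\partial G[S]|$ independent conditions is likewise unsubstantiated, because $\partial_{\tilde G}F$ typically contains many lifts of each vertex of $\partial G[S]$ and the relation between the two counts is precisely the difficulty. The argument in \cite{banks2022point} for this direction is substantially more delicate and does not proceed by directly projecting the support. Finally, the preliminary reduction to finitely supported eigenfunctions, which you defer as a "secondary obstacle", is itself one of the main non-combinatorial inputs of the cited work rather than a routine step.
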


Earlier work was done in this direction by Aomoto \cite{Ao1991}. The eigenvalues in the more general framework of \emph{unimodular trees} have also been studied by Salez \cite{Salez20}.

\begin{proposition}\label{prop:compare}
{\bf B-GV-M($G,\lambda$)} implies the criterion from Theorem~\ref{thm:criteria}: for any degree-2 subgraph $\gamma$ of $G$, $\lambda$ is a root of the generalized matching polynomial $m_{G\setminus\gamma}^\mathcal{H}$.
\end{proposition}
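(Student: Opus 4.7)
The plan is to prove Proposition~\ref{prop:compare} by reversing the criterion chain. By Theorem~\ref{thm:criteria} combined with Proposition~\ref{prop:eigenvalue-correspondence}, the desired conclusion is equivalent to the statement that $\lambda$ is an eigenvalue of the finite Floquet matrix $\mathcal{H}(z)$ for every $z\in\mathbb{T}^{|\vec{E}_0|}$, so this is what I would deduce from B-GV-M$(G,\lambda)$.

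Let $S\subseteq V$ be the set provided by B-GV-M$(G,\lambda)$, let $C_1,\ldots,C_k$ be the connected components of the forest $G[S]$, and recall $|\partial G[S]|<k$. Fix $z\in\mathbb{T}^{|\vec{E}_0|}$. The core of the argument is to show that the \emph{same} set $S$ witnesses B-GV-M$(G,\mathcal{H}(z),\lambda)$. The acyclicity of $G[S]$ and the boundary inequality are purely combinatorial and insensitive to the change from $\mathcal{H}$ to $\mathcal{H}(z)$; the only condition needing verification is that $\lambda$ remains in the spectrum of $\mathcal{H}(z)|_{C_i}$ for every $i$. For this I would fix a root in each tree $C_i$ and use the diagonal unitary whose value at a vertex $v$ is the cumulative product of $z_e$'s along the unique path from the root to $v$. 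Because $C_i$ is a tree this phase is unambiguous, and the resulting gauge transformation conjugates $\mathcal{H}(z)|_{C_i}$ back into $\mathcal{H}|_{C_i}$, so the two operators share a spectrum and $\lambda\in\mathrm{spec}(\mathcal{H}(z)|_{C_i})$.

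With B-GV-M verified for $\mathcal{H}(z)$, I would produce a $\lambda$-eigenfunction of the finite matrix $\mathcal{H}(z)$ via the standard Aomoto-type linear-combination construction. Pick an eigenfunction $\phi_i^{(z)}$ of $\mathcal{H}(z)|_{C_i}$ at eigenvalue $\lambda$, extended by zero outside $C_i$, and seek a nonzero $\psi=\sum_i c_i\phi_i^{(z)}$ in $\ker(\mathcal{H}(z)-\lambda I)$. The eigenequation at a vertex of $S$ is automatic because its neighbors in $S$ all lie in the same component, and it is vacuous at vertices outside $S\cup\partial G[S]$; the only genuine constraints are the $|\partial G[S]|$ equations at boundary vertices, which form a homogeneous linear system in the $k$ unknowns $c_i$. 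Since $|\partial G[S]|<k$ this system admits a nontrivial solution, yielding a nonzero $\psi$ (the $\phi_i^{(z)}$ have disjoint supports), and hence $\lambda$ is an eigenvalue of $\mathcal{H}(z)$ for every $z$. Combining with Proposition~\ref{prop:eigenvalue-correspondence} and Theorem~\ref{thm:criteria} closes the argument.

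The main obstacle is the gauge step in the middle paragraph: it relies crucially on the forest hypothesis, because a cycle in $G[S]$ would introduce a nontrivial monodromy $\prod_{e\in\sigma}z_e$ that generically shifts the spectrum and breaks the unitary equivalence of $\mathcal{H}(z)|_{C_i}$ with $\mathcal{H}|_{C_i}$. Acyclicity of $G[S]$ removes this obstruction and is precisely what makes the per-component spectrum independent of $z$; everything else is a routine counting argument.
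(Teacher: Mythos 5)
Your proof is correct, but it takes a genuinely different route from the paper's. The paper argues entirely at the level of matching polynomials: Lemma~\ref{lemma:atomcomparision} shows by induction on $|\partial G[S]|$, using the recursion of Lemma~\ref{lem:recursion}, that {\bf B-GV-M($G,\lambda$)} forces $m_G^{\mathcal{H}}(\lambda)=0$, and the proof of Proposition~\ref{prop:compare} then reduces to showing that {\bf B-GV-M} is stable under deletion of any degree-$2$ subgraph $\gamma_0$ --- the key point being a counting claim that $\gamma_0$ meets at least as many vertices of $\partial_G G[S]$ as components of $G[S]$ it traverses, proved by orienting the cycles of $\gamma_0$. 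You instead work spectrally: you convert the conclusion, via Theorem~\ref{thm:criteria} and Proposition~\ref{prop:eigenvalue-correspondence}, into the statement that $\lambda$ is an eigenvalue of $\mathcal{H}(z)$ for every $z$, use the forest hypothesis to gauge away the phases $z_e$ on each component $C_i$ of $G[S]$ (so $\lambda$ remains an eigenvalue of $\mathcal{H}(z)|_{C_i}$), and then run the Aomoto linear-combination argument: the only nontrivial constraints on $\psi=\sum_i c_i\phi_i^{(z)}$ sit at the $|\partial G[S]|<\mathrm{cc}(G[S])$ boundary vertices, so a nonzero solution exists. Both steps are sound; the two points worth making explicit are that acyclicity of the \emph{induced} subgraph $G[S]$ rules out self-loops and multi-edges inside $S$ (so the gauge is well defined on each tree and the eigenequation at a vertex of $S$ genuinely reduces to the component equation), and the orientation bookkeeping $U_v=\overline{z_e}\,U_u$ along tree edges. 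Your route is conceptually cleaner in that it exhibits $\lambda$ directly as a flat band of every Floquet matrix and in effect reproves that eigenvalues of $\mathcal{H}^{\uni}$ descend to $\mathcal{H}^{\ab}$ (the observation the paper attributes to \cite{arizmendi2024universality}); the price is that it uses the full strength of Theorem~\ref{thm:criteria} in both directions and the Floquet machinery, whereas the paper's combinatorial argument is self-contained modulo the recursion and yields, as a by-product, the purely graph-theoretic fact that {\bf B-GV-M($G\setminus\gamma_0,\lambda$)} holds for every degree-$2$ subgraph $\gamma_0$.
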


In particular, {\bf B-GV-M($G,\lambda$)} implies \(\lambda\) is also an eigenvalue of \(\mathcal{H}^{\mathrm{ab}}\) by Theorem~\ref{thm:criteria}.

We begin with the following weaker form of Proposition \ref{prop:compare}.
\begin{lemma}\label{lemma:atomcomparision}
If {\bf B-GV-M($G,\lambda$)} holds then $\lambda$ is a root of the generalized matching polynomial $m_G^\mathcal{H}$ of $G$.
\end{lemma}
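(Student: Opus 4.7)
The plan is to induct on $|V(G)|$, using the recursion relation for generalized matching polynomials (Lemma~\ref{lem:recursion}), while carefully updating the B-GV-M witness set $S$ along the way. The base case is when $V(G) = S$: then $G$ itself is a forest, the only degree-$2$ subgraph is empty, and Proposition~\ref{prop:expansionofschrodinger} reduces to $m_G^{\mathcal{H}}(x) = \det(xI - \mathcal{H})$. This determinant factors over the connected components of $G[S]$, each of which has $\lambda$ as an eigenvalue by assumption, so $m_G^{\mathcal{H}}(\lambda) = 0$.

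For the inductive step, suppose $V(G) \setminus S \neq \emptyset$ and pick any $v \in V(G) \setminus S$. Applying Lemma~\ref{lem:recursion} at $v$, it suffices to show that $\lambda$ is a root of $m_{G \setminus \{v\}}^{\mathcal{H}}$ and of $m_{G \setminus \{v, u\}}^{\mathcal{H}}$ for every neighbor $u \neq v$ of $v$. For the term $m_{G \setminus \{v\}}^{\mathcal{H}}$, and for any $m_{G \setminus \{v, u\}}^{\mathcal{H}}$ with $u \in V \setminus S$, the same set $S$ remains a valid B-GV-M witness for the smaller graph: the induced subgraph $G[S]$ is untouched, while the boundary $\partial S$ can only lose vertices. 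The inductive hypothesis then gives the desired vanishing.

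The delicate case is a neighbor $u \in S$ of $v$, which automatically forces $v \in \partial_G S$. Here I would replace $S$ by $S' := S \setminus V_T$, where $T$ is the connected component of $u$ in $G[S]$, and verify that $S'$ is a B-GV-M witness for $G \setminus \{v, u\}$. The subgraph $(G \setminus \{v, u\})[S'] = G[S']$ is a forest with $\mathrm{cc}(G[S]) - 1$ connected components, on each of which $\lambda$ remains an eigenvalue of the restriction of $\mathcal{H}$. The crucial boundary check exploits that $T$ is a \emph{connected component} of $G[S]$: every vertex of $V_T$ has its $S$-neighbors contained in $V_T$, so none in $S'$, yielding
\[
\partial_{G \setminus \{v, u\}} S' \;\subseteq\; \partial_G S \setminus \{v\},
\]
and hence $|\partial_{G \setminus \{v, u\}} S'| \leq |\partial_G S| - 1 \leq \mathrm{cc}(G[S]) - 2 < \mathrm{cc}(G[S'])$. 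The inductive hypothesis then closes the loop.

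The main obstacle will be this final boundary-counting step. One loses an entire component of $G[S]$ by shrinking $S$ to $S'$, but the strict inequality $|\partial_G S| < \mathrm{cc}(G[S])$ in the B-GV-M criterion provides exactly the slack needed to absorb that loss, provided that one simultaneously drops a vertex from $\partial_G S$, which is guaranteed since $u \in S$ forces $v \in \partial_G S$.
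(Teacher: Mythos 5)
Your proof is correct and follows essentially the same route as the paper's: the recursion relation at a vertex outside $S$, the replacement of $S$ by $S$ minus the whole component of $u$ when $u\in S$, and the boundary-slack count $|\partial_{G\setminus\{v,u\}}S'|\le|\partial_G S|-1<\mathrm{cc}(G[S])-1$ are exactly the paper's argument. The only (immaterial) difference is that you induct on $|V(G)|$ and take $v\in V\setminus S$ arbitrary, whereas the paper inducts on $|\partial_G G[S]|$ and takes $v\in\partial_G G[S]$, which is what lets its first recursion term strictly decrease the induction parameter.
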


\begin{proof}
    We proceed by induction on $\left|\partial G[S] \right|$. First, if $\left|\partial G[S] \right|=0$, then $G$ is a disjoint union of the forest $G[S]$ and $G[V\setminus S]$, thus the result holds immediately.

    Now assume that for some $k\geq 1$ the result holds for any $G$ whenever the associated set $S$ has
    $\left|\partial G[S] \right| <k$. Suppose that $\left|\partial G[S] \right| =k$ and fix any $v\in \partial G[S]$. By the recursion Lemma~\ref{lem:recursion}, we have 
    $$m_G^{\mathcal{H}}(x)=\left(x-\mathcal{V}_v\right)\cdot m_{G\setminus \{v\}}^{\mathcal{H}}(x)-\sum_{u\sim v, u\neq v} \left(\sum_{e'\in \vec{E};\, o(e)=v,\,t(e)=u}|w_{e}|^2\right)\cdot m_{G\setminus \{v,u\}}^{\mathcal{H}}(x).$$
   The proof thus follows if we show that $\lambda$ is a root of both $m_{G\setminus \{v\}}^{\mathcal{H}}$ and $m_{G\setminus \{v,u\}}^{\mathcal{H}}$ for $u\sim v, u\neq v$.
   \begin{itemize}
       \item {Proof of $m_{G\setminus \{v\}}^{\mathcal{H}}(\lambda)=0$.} Notice first that the induced subgraph on $S$ in $G\setminus \{v\}$ is the same as the induced subgraph on $S$ in $G$. Moreover, the number of boundary points will be reduced by $1$ after we delete $v$. Thus directly from the induction assumption we have $m_{G\setminus \{v\}}^{\mathcal{H}}(\lambda)=0$.
       
       \item {Proof of $m_{G\setminus \{v,u\}}^{\mathcal{H}}(\lambda)=0$ for $u\sim v, u\neq v$.} If $u\notin S$, then the proof is the same as the case above so we assume that $u \in S$. Define $S_0\subset S$ to be the sub-collection of vertices obtained after removing all vertices from $S$ that are in the same component as $u$ in $G[S]$. If $S_0=\emptyset$ then it must have been the case that $\mathrm{cc}(G[S])=1$ and so $|\partial G[S]|=0$ which is not the case as $k\geq 1$. Thus the induced subgraph on $S_0$ in $G\setminus\{u,v\}$, denoted as $X$, is acyclic and each connected component has eigenvalue $\lambda$. Moreover, the number of its connected component is $\mathrm{cc}(G[S])-1$. Since $$\partial_{G\setminus\{u,v\}}X \sqcup \{v\} \subset \partial_G G[S], $$ we have
       $$\left|\partial_{G\setminus\{u,v\}}X\right|\leq \left|\partial_G G[S]\right|-1< \mathrm{cc}(G[S])-1= \mathrm{cc}(X).$$
       By the induction assumption it follows that $m_{G\setminus \{v,u\}}^{\mathcal{H}}(\lambda)=0$.
   \end{itemize}
\end{proof}


\begin{proof}[Proof of Proposition \ref{prop:compare}]
Suppose  {\bf B-GV-M($G,\lambda$)} holds and let $S$ be the set provided.
Let $\gamma_0$ be a degree-2 subgraph of $G$. By Lemma \ref{lemma:atomcomparision} it suffices to prove \\{\bf B-GV-M($G \backslash \gamma_0, \lambda$)}.

To this end, define $S_0\subseteq S$ to be the sub-collection of vertices that are in connected components of $G[S]$ disjoint from $\gamma_0$. 

We claim that if $\gamma_0$ transverses exactly $k$ disjoint components of $G[S]$, then 
$$|\partial_G G[S] \cap V_{\gamma_0}| \geq k.$$
Note that this shows in particular that $S_0\neq\emptyset$ since otherwise we would have 
$$\mathrm{cc}(G[S])>\partial_G G[S]\geq|\partial_G G[S] \cap V_{\gamma_0}| \geq k=\mathrm{cc}(G[S]),$$
a contradiction.

\begin{figure}[H]
\centering
\includegraphics[]{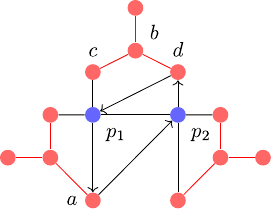}
\caption{Red nodes: the Aomoto set \(S\). Blue nodes: \(\partial_G G[S]\). The cycle on \(\{a,p_1,p_2\}\) transverses \(1\) component of the Aomoto set and \(2\) nodes from \(\partial_G G[S]\). The cycle on \(\{a,p_2,d,p_1\}\) transverses \(2\) components of the Aomoto set and \(2\) nodes from \(\partial_G G[S]\). }
\label{fig:appendix}
\end{figure}

{Proof of the claim}:
Each connected component of $\gamma_0$ is a cycle, so we can arbitrarily fix an orientation on each of these cycles, thereby assigning a direction to their edges. For each component \(T\) of \(G[S]\) traversed by \(\gamma_0\), choose a vertex \(u_T \in \partial_G G[S] \cap V_{\gamma_0}\) such that there exists a directed edge in \(\gamma_0\) from some \(v \in V_T\) to \(u_T\) that is compatible with the chosen orientation. Such a vertex \(u_T\) exists because \(T\) is a tree, and hence cannot contain an entire cycle. Note that this \(u_T\) might not be unique. If \(T_1\) and \(T_2\) are two distinct components of \(G[S]\), then \(u_{T_1} \neq u_{T_2}\), since the orientation ensures that each vertex in \(\partial_G G[S]\) receives at most one incoming edge from \(G[S]\). Therefore, this choice of \(u_T\) defines an injection from the set of connected components of \(G[S]\) traversed by \(\gamma_0\) to \(\partial_G G[S]\cap V_{\gamma_0}\). This proves the claim. See Figure~\ref{fig:appendix} for an illustration.


Now let $X$ be the induced subgraph on $S_0$ in $G\setminus\gamma_0$. Since 
$$\partial_{G\setminus\gamma_0}X \sqcup (\partial_G G[S] \cap V_{\gamma_0}) \subset \partial_G G[S],$$
we have 
$$\left|\partial_{G\setminus\gamma_0}X\right|\leq \left|\partial_G G[S]\right|-|\partial_G G[S] \cap V_{\gamma_0}|\leq \left|\partial_G G[S]\right|-k<\mathrm{cc}(G[S])-k= \mathrm{cc}(X).$$ 
Moreover, since $S_0$ is non-empty and each of the connected components of $X$ is a connected component of $G[S]$, we have by hypothesis that $\lambda$ is an eigenvalue of each of the components of $X$. This proves {\bf B-GV-M($G \backslash \gamma_0 ,\lambda$)} as required.
\end{proof}

\bibliographystyle{abbrv}
\bibliography{reference}

\begin{tabular}{@{}l@{}}%
Wenbo Li \\
School of Mathematical Sciences,
University of Science and Technology of China, \\
No.\,96 Jinzhai Road, Hefei, China \\
\texttt{patlee@mail.ustc.edu.cn}
\end{tabular}

\vspace{1em}

\begin{tabular}{@{}l@{}}%
Michael Magee \\
Department of Mathematical Sciences, 
Durham University, DH1 3LE Durham, UK \\
\texttt{michael.r.magee@durham.ac.uk}
\end{tabular}

\vspace{1em}

\begin{tabular}{@{}l@{}}%
Mostafa Sabri \\
Science Division,
New York University Abu Dhabi, Saadiyat Island, UAE \\
\texttt{mostafa.sabri@nyu.edu}
\end{tabular}

\vspace{1em}

\begin{tabular}{@{}l@{}}%
Joe Thomas \\
Department of Mathematical Sciences,
Durham University, DH1 3LE Durham, UK \\
\texttt{joe.thomas@durham.ac.uk}
\end{tabular}

\end{document}